\newcommand{\Char}{\operatorname{char}}
\newcommand{\dis}{\displaystyle}
\newcommand{\Hom}{\operatorname{Hom}}
\newcommand{\Ker}{\operatorname{Ker}} 
\newcommand{\h}{\operatorname{height}}
\newcommand{\reg}{\operatorname{reg}}
\newcommand{\Cl}{\operatorname{Cl}} 
\newcommand{\rank}{\operatorname{rank}}
\newcommand{\sig}{\sigma}
\newcommand{\Sig}{\Sigma}
\newcommand{\Q}{\mathbb Q}
\newcommand{\Z}{\mathbb Z}
\newcommand{\N}{\mathbb N}
\newcommand{\R}{\mathbb R}
\newcommand{\C}{\mathbb C}
\newcommand{\F}{\mathbb F}
\newcommand{\Pp}{\mathbb P}
\newcommand{\K}{{\mathbb K}}
\newcommand{\T}{{ (\K^*)}}
\newcommand{\cC}{\cl C}
\newcommand{\cl}[1]{\mathcal{#1}}
\newcommand{\la}{\langle}
\newcommand{\ra}{\rangle}
\def\aa{{\bf \alpha}}
\def\bb{\beta}
\def\a{{\bf a}}
\def\t{{\bf t}}
\def\q{{\bf q}}
\def\y{{\bf y}}
\def\uu{{\bf u}}
\def\vv{{\bf v}}
\def\x{{\bf x}}
\def\m{{\bf m}}
\def\kay{{\chi}}
\def\b{{\bf b}}
\def\e{{\bf e}}
\def\ev{{\text{ev}}}
\newcommand{\fp}{{\mathfrak p}}
\newtheorem{tm}{Theorem}[section]
\newtheorem{lm}[tm]{Lemma}
\newtheorem{coro}[tm]{Corollary}
\newtheorem{pro}[tm]{Proposition}
\newtheorem{defi}[tm]{Definition}
\newtheorem{ex}[tm]{Example}
\newtheorem{rema}[tm]{Remark}
\numberwithin{equation}{section}
\begin{document}

% \title[short text for running head]{full title}
\title{Rational points of lattice ideals on a toric variety and toric codes}

%    Only \author and \address are required; other information is
%    optional.  Remove any unused author tags.

%    author one information
% \author[short version for running head]{name for top of paper}
\author{Mesut \c{S}ah\.{i}n}
\address{ Department of Mathematics,
  Hacettepe  University,
  Ankara, TURKEY}
\curraddr{}
\email{mesut.sahin@hacettepe.edu.tr}
\thanks{The author is supported by T\"{U}B\.{I}TAK Project No:119F177}

%    author two information
%\author{}
%\address{}
%\curraddr{}
%\email{}
%\thanks{}

%    \subjclass is required.
\subjclass[2020]{Primary 14M25; 14G05
; Secondary 94B27
; 11T71}

\date{}

\dedicatory{}

%    Abstract is required.
\begin{abstract}
We show that the number of rational points of a subgroup inside a toric variety over a finite field defined by a homogeneous lattice ideal can be computed via Smith normal form of the matrix whose columns constitute a basis of the lattice. This generalizes and yields a concise toric geometric proof of the same fact proven purely algebraically by Lopez and Villarreal for the case of a projective space and a standard homogeneous lattice ideal of dimension one. We also prove a Nullstellensatz type theorem over a finite field establishing a one to one correspondence between subgroups of the dense split torus and certain homogeneous lattice ideals. As application, we compute the main parameters of generalized toric codes on subgroups of the torus of Hirzebruch surfaces, generalizing the existing literature.
\end{abstract}

\maketitle

\section{Introduction} 
Let $\Sigma$ be a complete simplicial fan with rays generated by the lattice vectors $\vv_1,\dots,\vv_r \in \Z^n$. Each cone $\sigma \in \Sigma$ defines an affine toric variety $U_{\sigma}$ over a field $\K$ whose ring of regular functions is the semigroup ring $\K[ \check{\sigma}\cap \Z^n]$. Gluing these affine pieces in a standard way, we obtain the split toric variety $X_{\Sigma}$ as an abstract variety over $\K$. The Cox ring $S=\K[x_1,\dots,x_r]$ of $X_{\Sigma}$ is graded naturally by the group $\Cl X_{\Sigma}$, i.e. we have the decomposition
$S=\displaystyle \bigoplus_{\aa \in \Cl X_{\Sigma}} S_{\aa}$, where $S_{\aa}$ is the $\K$-vector space spanned by the monomials of degree $\aa$.

By a celebrated result due to Cox in \cite{CoxRingToric}, $\overline{\K}$-rational points of a toric variety $X:=X_{\Sigma}$ over an algebraic closure $\overline{\K}$ of $\K$, is identified with the orbits in the Geometric Invariant Theory quotient $[\overline{\K}^r\setminus V(B)] /G$, see Section \ref{s:prelim} for details. It follows that the $\K$-rational points $X(\K)$ of $X$ can have representatives with coordinates in $\K$. Thus, we can study $\K$-rational points of a subvariety of $X$ cut out by a $\Cl X_{\Sigma}$-graded (or simply homogeneous) ideal $J$ using this correspondence:
\[
V_{X(\K)}(J)=\{ [P]\in [\K^r\setminus V(B)] /G  \: : \: F(P)=0, \text{ for any homogeneous } F\in J \}.
\]

Our main motivation comes from immediate applications to error-correcting codes defined on toric varieties over a finite field $\K=\F_q$. In this regard, we assume that the class group $\Cl X_{\Sigma}$ is free throughout, which does not harm the generality, as the codes on a singular toric variety survive when the singularity is resolved and smooth complete toric varieties have a torsion-free class group, by \cite[Proposition 4.2.5]{CLS-ToricVarieties}.

 For any $\aa$ and any subset $Y=\{[P_1],\dots,[P_N]\}$ consisting of some $\F_q$-rational points of $X$, we have the following {\it evaluation map} for $\K=\F_q$:
\begin{equation}
\dis \ev_{Y}:S_\aa\to \K^N,\quad F\mapsto \left(F(P_1),\dots,F(P_N)\right).
\end{equation}
The image $\cl{C}_{\aa,Y}=\text{ev}_{Y}(S_\aa)$ is a linear code, called a {\it generalized toric code}. The three basic parameters of $\cl C_{\aa,Y}$ are block-length which is $N$, the dimension which is $K=\dim_{\F_q}(\cl C_{\aa,Y})$, and the minimum distance $\delta=\delta(\cl{C}_{\aa,Y})$ which is the minimum of the numbers of nonzero components of nonzero vectors in $\cl C_{\aa,Y}$.

When $Y=T_X(\F_q)$ is the $\F_q$-rational points of the maximal torus of $X$ we get the classical toric code examined for the first time by Hansen in \cite{HansenAAECC} and studied further in e.g. \cite{JoynerAAECC,DR2009,OrderBound,ToricCIcodesIS13,LittleFFA2013,7championCodes} showcasing some champion codes. Although the block-length of $\cC_{\aa,Y}$ equals $N=(q-1)^n$ in this classical case, its computation depends on the way $Y$ is presented in the general case. For instance, if $Y=Y_Q$ is parameterised by the monomials whose power is a column of an integer matrix $Q$, then some algorithms are given for computing the length $N=|Y|$, see \cite{AlgebraicMethods2011,BaranSahin}. The vanishing ideal $I(Y)\subset S$ of $Y$, which is generated by homogeneous polynomials vanishing on $Y$, is relevant in the computation of the other parameters as well. Namely, the kernel of $\ev_{Y}$ is just the subspace $I_{\aa}(Y):=I(Y)\cap S_{\aa}$, and hence the dimension $K=\dim_{\F_q}(\cl C_{\aa,Y})$ is the value $H_Y(\aa):=\dim_{\F_q} S_{\aa}-\dim_{\F_q}I_{\aa}(Y)$ of the multigraded Hilbert function $H_Y$ of $Y$, see \cite{MultHFuncToricCodes16}. The minimum distance function of $I(Y)$ is introduced and is related to the minimum distance of $\cC_{\aa,Y}$ in \cite{JBYPRV2017} for the case of the projective space $X=\Pp^n$.  The ideal $I(Y_Q)$  is a lattice ideal $I_L$ for a unique lattice $L\subseteq \Z^r$ of rank $n$ as shown in \cite{AlgebraicMethods2011,CodesWPS,BaranSahin}. Moreover, every subgroup of $T_X(\F_q)$ is not only of the form $Y_Q$ but also is the common zeroes $V_{X(\F_q)}(I_L)$ of binomials generating a lattice ideal $I_L$, see \cite[Section 3]{sahin18}.

The present paper aims mainly to develop methods for computing parameters of generalized toric codes $\cl C_{\aa,Y}$ from subgroups $Y=V_{X(\F_q)}(I_L)$ of $T_X$. Before we state our main results, we need to introduce a little more notation. 

Let $\cl B$ be an $r\times n$ matrix whose columns constitute a basis for $L$; let $\cl A$ and $\cl C$ be unimodular matrices of sizes $r$ and $n$, respectively, so that $$\cl D=\cl A \cl B \cl C=[d_1\e_1 |\cdots |d_n\e_n]$$ is the Smith-Normal form of $\cl B$, where $\e_1,\dots,\e_n \in \Z^r$. These $d_i$'s are known as the \textit{invariant factors} having the property that $d_i$ divides $d_{i+1}$ for each $i\in [n-1]:=\{1,\dots,n-1\}$ and that the order of the torsion part $T(\Z^r /L)$ of the group $\Z^r /L$ is $d_1\cdots d_n$, by the fundamental structure theorem for finitely generated abelian groups, see \cite[pp. 187–-188]{Jacobson}.

 We first observe in Proposition \ref{p:VX(IL)lyingInTX} that the common solutions $V_{X(\K)}(I_L)$ of polynomials in $I_L$ lie in the torus $T_X$, for any field $\K$. 
 
 Then, we count characters to see the number of $\K$-rational points is finite: $|V_{X(\K)}(I_L)| \leq d_1\cdots d_n.$ Furthermore, it meets this upper bound if and only if $\K$ has all the $d_i$-th roots of unity, for every $i \in [n]$, see Theorem \ref{t:K-rationalpoints}.  
 
 Let $Y$ be a subgroup of $T_X(\F_q)$. Then, $I(Y)=I_L$ for a unique lattice $L$ with $(q-1)L_{\beta}\subseteq  L\subseteq L_{\beta}$, see Lemma \ref{l:latticeOFsubgroup}. Moreover, the number of $\F_q$-rational points in $Y$ is given in Theorem \ref{t:rationalpoints} by $$|Y|=|V_{X(\F_q)}(I_L)|=|L_{\beta}/L|=d_1\cdots d_n.$$ Therefore, the length of the code $\cl C_{\aa}(Y)$ will be $d_1\cdots d_n$. 
 
Furthermore, we observe that the set $V_{X(\F_q)}(I_L)$ of $\F_q$-rational points coincide with the set of $\overline{\F}_q$-rational points $V_X(I_L):=V_{X(\overline{\F}_q)}(I_L)$ in Lemma \ref{l:Subgroups=overClosure} for lattices with $(q-1)L_{\beta}\subseteq  L\subseteq L_{\beta}$. As a consequence, we also prove a Nullstellensatz type result over the finite field $\F_q$ saying that $I(V_X(I_L))=I_L$ establishing a $1-1$ correspondence between the subgroups $V_X(I_L)$ and the lattice ideals $I_L$, where $(q-1)L_{\beta}\subseteq  L\subseteq L_{\beta}$, see Theorem \ref{t:nullstellensatz}. This result will eliminate the need for an algorithm to compute the vanishing ideal of the subgroup $V_X(I_L)$, which will save time in search of codes with good parameters. In literature, Nullstellensatz type results over finite fields have been obtained (see \cite{Ghorpade19} for a nice survey) and used effectively for computing the basic parameters of evaluation codes on subvarieties of affine and projective spaces, see \cite[Theorems 3.12, 3.13]{Jaramillo21DCC} and \cite[Corollary 4.4]{AlgebraicMethods2011}.  

If $I_L$ is a homogeneous ideal of dimension $d=r-n$, then its degree is just the constant Hilbert polynomial, which is computed to be $\deg(I_L)=|L_{\bb}/L|=d_1\cdots d_n$, in Theorem \ref{t:DegreeOfI_L}, generalizing the nice result  \cite[Theorem 3.13]{HLRV2014}. This will be nothing but the mixed volume of the Newton polytopes of binomials generating $I_L$ by Corollary \ref{c:mixedvolume}. It is worth to mention \cite[Theorem 4.6]{O'Carroll14} also generalizing \cite[Theorem 3.13]{HLRV2014} and giving a nice formula for the degree of $I_L$, when $X$ is an affine space and $L$ is an arbitrary lattice, see also \cite[Theorem 9.4.2]{VillarrealBOOK}.

Finally, we uncover the structure and give two descriptions for the points of the subgroup cut out by the lattice ideal $I_L$. More precisely, we have
\[\dis V_{X(\K)}(I_L)=\la [P_1] \ra \times \dots \times \la [P_n] \ra,
\]
for the point $[P_i]=[\eta_i^{a_{i1}}:\cdots:  \eta_i^{a_{ir}}]$, where $\eta_i$ is a primitive $d_i$-th root of unity in $\K$, for each $i\in [n]$. In addition, $$V_{X(\F_q)}(I_L)=Y_Q=\{[{\bf t}^{\q_{1}}:\cdots:{\bf t}^{\q_{r}}]|{\bf t}\in (\F_q^{*})^s\}$$ is parameterized by the columns of the matrix $Q=[\q_1 \q_2\cdots \q_r]$, whose $i$-th row is  $(q-1)/d_i$ times the $i$-th row of the matrix $\cl A$, see Theorem \ref{t:parameterization}. 

As application, we compute the main parameters of generalized toric codes on subgroups of the torus of Hirzebruch surfaces, generalizing the existing literature, see Theorem \ref{T:codesOnHirzebruch}. The parameterization above is especially useful in computing the minimum distance.

\section{preliminaries} \label{s:prelim}

In this section we fix our notation and recall standard facts from toric geometry relying on the most recent wonderful book \cite{CLS-ToricVarieties}, although there are other well known excellent books.

Let $\Sigma$ be a complete simplicial fan with rays generated by the lattice vectors $\vv_1,\dots,\vv_r \in \R^n$. The fan gives the following short exact sequence: 
\begin{equation}\label{eq:SES}
\dis \xymatrix{ \mathfrak{P}: 0  \ar[r] & \Z^n \ar[r]^{\phi} & \Z^r \ar[r]^{{\bb}} & \Z^d \ar[r]& 0},
\end{equation}
where $\phi$  denotes  the matrix  $[\vv_1\cdots \vv_r]^T$ and $d=r-n$ is the rank of the class group $\Cl X$ of $X:=X_{\Sigma}$. Denote by $L_{\bb}$ the key sublattice of $\Z^r$ which is isomorphic to $\Z^n$ via $\phi$, whose basis is given by the columns $\uu_1,\dots, \uu_n$ of $\phi$.

The homogeneous coordinate ring $\overline{\K}[x_1,\dots,x_r]$ of the toric variety $X$ over $\overline{\K}$ is graded naturally by the columns of the matrix $\bb$ via $\deg(x_j)=\bb_j$, for each $j\in [r]$. A polynomial whose monomials $x_1^{a_1}\cdots x_r^{a_r}$ have the same degree $\aa=a_1\bb_1+\cdots+a_r\bb_r$ is called $\bb$-graded, $L_{\bb}$-homogeneous or just homogeneous when there is no harm of confusion. We are interested in sublattices $L$ of $L_{\bb}$ and the $L_{\bb}$-homogeneous \textit{lattice ideals} they define in the Cox ring $\K[x_1,\dots,x_r]$ of $X(\K)$: 
$$I_L=\la \x^{\m^+}- \x^{\m^-} \: | \: \m=\m^{+}-\m^{-}\in L \ra. $$

Applying $\Hom(-,\overline{\K}^*)$ functor to $\mathfrak{P}$ we get the following dual short exact sequence since $\overline{\K}^*$ is divisible:
\begin{equation}\label{eq:dualSES}
\dis \xymatrix{ \mathfrak{P}^*: 1  \ar[r] & G \ar[r]^{i} & (\overline{\K}^*)^r \ar[r]^{\pi} & (\overline{\K}^*)^n \ar[r]& 1},
\end{equation}
where $\pi:P \mapsto (\x^{\uu_1}(P), \dots , \x^{\uu_n}(P))$ is expressed using the notation 
\[\x^{\a}(P)=p_1^{a_{1}}\cdots p_r^{a_{r}} \mbox{ for } P=(p_1,\dots,p_r)\in (\K^*)^r \mbox{ and for } \a=(a_1,\dots,a_r)\in \Z^r.\]

 Notice also that the algebraic group $G=\ker(\pi)$ is the zero locus in $(\overline{\K}^*)^r$ of the toric (prime lattice) ideal $I_{L_{\bb}}$:
\begin{align*}
V(I_{L_{\bb}})\cap (\overline{\K}^*)^r:&=\{P \in (\overline{\K}^*)^r \: | \: (\x^{\m^+}- \x^{\m^-})(P)=0 \:\mbox{for all} \: \m\in L_{\bb} \}\\
&=\{P \in (\overline{\K}^*)^r \: | \: \x^{\m}(P)=1 \:\mbox{for all} \: \m\in L_{\bb} \},    
\end{align*}
where $V(J)$ denotes the usual affine subvariety of $\overline{\K}^r$ cut out by the ideal $J$.

As proved by Cox in \cite{CoxRingToric}, when $\overline{\K}$ is an algebraic closure of $\K$, the toric variety $X$ over $\overline{\K}$, is isomorphic to the geometric quotient $[\overline{\K}^r\setminus V(B)] /G$, where $B$ is the monomial ideal in $\K[x_1,\dots,x_r]$ generated by the monomials 
\[
\dis \x^{\hat{\sig}}=\Pi_{\rho_i \notin \sig}^{} \: x_i \text{ corresponding to cones } \sig \in \Sig.
\]
Hence, $\K$-rational points $X(\K)$ are orbits $[P]:=G\cdot P$, for $P\in \K^r\setminus V(B)$. By (\ref{eq:dualSES}), we have $T_X=(\overline{\K}^*)^r/G$ and since the toric variety is split we have $T_X(\K)\cong \T^n$. By resolving the singularity, if needed, we may assume without loss of generality that there is at least one smooth cone $\sigma \in \Sigma$. Hence, the first $n$ rows of the matrix $\phi$ in (\ref{eq:SES}) are $\vv_1=\e_1, \dots, \vv_n=\e_n$. Thus, the map $\pi$ in (\ref{eq:dualSES}) becomes 
$$\pi(x_1,\dots,x_r)=(x_1 \x^{\uu_1-\e_1},\dots,x_n \x^{\uu_n-\e_n}).$$ Hence, given $(t_1,\dots,t_n)\in (\overline{\K}^*)^n$, solving $x_i\x^{\uu_n-\e_n}=t_i$, we get 
\[x_i=t_i\x^{\uu_i-\e_i}=t_ix_{n+1}^{-u_{n+1,i}}\cdots x_{r}^{-u_{r,i}}, \text{ for all } i\in [n].
\]
As a result, the set $\pi^{-1}(t_1,\dots,t_n)$ consists of the points 
\begin{eqnarray}\label{eq:piInverse}
 (t_1\lambda_1^{-u_{n+1,1}}\cdots \lambda_d^{-u_{r,1}},\dots,t_n\lambda_1^{-u_{n+1,n}}\cdots \lambda_d^{-u_{r,n}},\lambda_1,\dots,\lambda_d)   
\end{eqnarray}
for $(\lambda_1,\dots,\lambda_d)=(x_{n+1},\dots,x_r)\in (\overline{\K}^*)^d$, yielding the parameterization:
\[
G=\{(\lambda_1^{-u_{n+1,1}}\cdots \lambda_d^{-u_{r,1}},\dots,\lambda_1^{-u_{n+1,n}}\cdots \lambda_d^{-u_{r,n}},\lambda_1,\dots,\lambda_d): \lambda_i \in \overline{\K}^*\}.
\]

When $(t_1,\dots,t_n)\in (\K^*)^n$, there is a point $P=(t_1,\dots,t_n,1\dots,1)\in (\K^*)^r$ with $\pi(P)=(t_1,\dots,t_n)$. Therefore, points of $T_X(\K)=\T^r/G$ have representatives with components from $\K^*$. Furthermore, equivalence of two points $(x_1,\dots,x_r),(x'_1,\dots,x'_r)\in \T^r$ with respect to the action of $G$ and that of $G(\K)$ are the same. This is because, when 
$G\cdot (x_1,\dots,x_r)=G\cdot (x'_1,\dots,x'_r)$, there is a point $(g_1,\dots,g_r)\in G \subset (\overline{\K}^*)^r$ with $x_i=g_ix'_i$ for all $i\in [r]$. In particular, we have $x_{n+i}=\lambda_ix'_{n+i}$ forcing $\lambda_i=x_{n+i}/x'_{n+i} \in \K$, for all $i\in [d]$.

For an $L_{\bb}$-graded ideal $J$, we introduce the subvariety of the toric variety $X$ defined by $J$ as follows
\[
V_{X}(J)=\{ [P]\in X(\overline{\K})  \: : \: F(P)=0, \text{ for any homogeneous } F\in J \}.
\] Its $\K$-rational points form the subset
\[
V_{X(\K)}(J)=\{ [P]\in X(\K)  \: : \: F(P)=0, \text{ for any homogeneous } F\in J \}.
\]
Therefore, any point $[P]\in V_{X(\K)}(J)$ is an equivalence class $G\cdot P$ of a point $P$ from $V(J)\setminus V(B) \subseteq \K^r\setminus V(B)$, for the subgroup $G\subset (\overline{\K}^*)^r$. 

As the elements of the algebraic torus $(\overline{\K}^*)^r$ in $\mathfrak P^* $ are already identified with the characters $\rho: \Z^r \rightarrow \overline{\K}^*$ and $T_X(\K)=\T^r/G$, we take the approach to investigate characters $\rho: \Z^r \rightarrow \K^*$ in order to compute the number of $\K$-rational points of the lattice ideal $I_L$ on the toric variety $X$.

\section{Subgroups defined by lattice ideals over any field}
\label{sec:subgroupsK}

In this section, we study subgroups of the torus $T_X$ defined by lattice ideals $I_L$ inside $\K[x_1,\dots,x_r]$, which is an $L_{\bb}$-graded ring via $\deg(x_i)=\bb_i$, where $\bb_i$ is the $i$-th column of the matrix $\bb$ in (\ref{eq:SES}) and $L$ is a sublattice of $L_{\bb}$.

First of all, $\rank L_{\bb}=n$ since $L_{\bb}$ is isomorphic to $\Z^n$ via the map $\phi$ in the sequence $\mathfrak{P}$ in (\ref{eq:SES}). On the other hand, it is well-known that $\dim J= r-\h (J)$, where $\dim J$ is the Krull dimension of the quotient ring $\K[x_1,\dots,x_r]/J$. So, we have
$$\rank L=n=\h(I_L) \iff \dim I_L= d=r-n.$$

As the vanishing ideals of subgroups of the torus $T_X$ are $L_{\bb}$-homogeneous lattice ideals of Krull-dimension $d$ by \cite[Proposition 2.6]{sahin18}, and $I_L$ is $L_{\bb}$-homogeneous if and only if $L\subseteq L_{\bb}$ by \cite[Proposition 2.3]{sahin18}, we focus on lattices $L$ satisfying $L\subseteq L_{\bb}$ and $\rank L=n$. 

\begin{lm}\label{l:(ell*satL)SubsetL} Let $L\subseteq L_{\bb}$ and $\rank L=n$. Then, $\ell L_{\bb} \subseteq L$ for the order $\ell=|L_{\bb}/L|$.
\end{lm}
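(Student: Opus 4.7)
The plan is to exploit that $L_{\bb}/L$ is a finite abelian group and invoke Lagrange's theorem. First I would verify the finiteness: since $L_{\bb}$ is a free abelian group of rank $n$ (being isomorphic to $\Z^n$ via the map $\phi$ in the sequence $\mathfrak{P}$), and $L \subseteq L_{\bb}$ is a sublattice of the same rank $n$ by hypothesis, the quotient $L_{\bb}/L$ is a finite abelian group. By definition, its order is exactly $\ell = |L_{\bb}/L|$.

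Next, I would apply Lagrange's theorem to this finite group: every element of $L_{\bb}/L$ has order dividing $\ell$. Consequently, for each $\m \in L_{\bb}$, the coset $\m + L$ satisfies $\ell(\m + L) = L$, i.e.\ $\ell \m \in L$. Since this holds for every $\m \in L_{\bb}$, we obtain $\ell L_{\bb} \subseteq L$, as desired.

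There is no real obstacle here; the statement is essentially the observation that the exponent of a finite abelian group divides its order, transported to the lattice setting via the standard identification of cosets. If the author wants a more explicit argument avoiding Lagrange, one could alternatively use the Smith normal form already introduced in the introduction: choose bases so that $L_{\bb} \cong \Z^n$ and $L$ is generated by $d_1 \e_1, \dots, d_n \e_n$ inside $\Z^n$; then $L_{\bb}/L \cong \Z/d_1 \times \cdots \times \Z/d_n$ has order $\ell = d_1 \cdots d_n$, and clearly $d_i$ divides $\ell$ for each $i$, so $\ell \e_i \in L$ for each $i$, giving $\ell L_{\bb} \subseteq L$.
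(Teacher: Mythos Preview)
Your proposal is correct and follows essentially the same approach as the paper: the paper's proof simply notes that $L_{\bb}/L$ is a finite group of order $\ell$, so $\ell(\m+L)=L$ for every $\m\in L_{\bb}$, i.e.\ $\ell\m\in L$. Your additional justification of finiteness and the alternative Smith normal form argument are fine but more than the paper provides.
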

\begin{proof}
 Since $L_{\bb}/L$ is a finite group of order $\ell$, we have $\ell(\m+L)=L$, for every $\m\in L_{\bb}$. This means that $\ell \m \in L$ for all $\m\in L_{\bb}$.
\end{proof}

\begin{lm}\label{l:latticeComparison} $L\subseteq L'$ if and only if $I_L\subseteq I_{L'}$.
\end{lm}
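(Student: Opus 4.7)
My plan is to handle the two directions separately: the forward implication is immediate from the generating set of $I_L$, while the converse reduces to the standard fact that a pure binomial $\x^{\a}-\x^{\b}$ lies in a lattice ideal $I_M$ if and only if $\a-\b\in M$.

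For the forward direction, assume $L\subseteq L'$. Each listed generator $\x^{\m^{+}}-\x^{\m^{-}}$ of $I_L$ comes from some $\m=\m^{+}-\m^{-}\in L$; by hypothesis $\m\in L'$, so the same binomial is one of the generators of $I_{L'}$. Since $I_L$ is generated by these binomials, the containment $I_L\subseteq I_{L'}$ is automatic.

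For the converse, assume $I_L\subseteq I_{L'}$ and take an arbitrary $\m\in L$. Writing $\m=\m^{+}-\m^{-}$ with disjoint supports, we have $\x^{\m^{+}}-\x^{\m^{-}}\in I_L\subseteq I_{L'}$, and I would deduce $\m\in L'$ from the characterization mentioned above. To establish that characterization within the paper's setup, I would pass to the Laurent polynomial ring $R'=\K[x_1^{\pm 1},\dots,x_r^{\pm 1}]$ and invoke the well-known isomorphism $R'/I_{L'}R'\cong \K[\Z^r/L']$ induced by $x_i\mapsto e_i+L'$. Multiplying $\x^{\m^{+}}-\x^{\m^{-}}$ by the unit $\x^{-\m^{-}}$ in $R'$ yields $\x^{\m}-1\in I_{L'}R'$, so the image of $\x^{\m}$ in $\K[\Z^r/L']$ equals $1$, forcing $\m\in L'$.

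The only delicate point is the identification $R'/I_{L'}R'\cong \K[\Z^r/L']$ (cf.\ Eisenbud--Sturmfels, \emph{Binomial ideals}): the obvious surjection $R'\twoheadrightarrow \K[\Z^r/L']$ visibly kills the generators of $I_{L'}R'$, and any element of its kernel can be reduced, using the defining relations $\x^{\n^{+}}\equiv \x^{\n^{-}}$ for $\n\in L'$, to a $\K$-linear combination of monomials supported on distinct cosets of $L'$ in $\Z^r$, which must then vanish termwise. The rest of the argument is essentially book-keeping.
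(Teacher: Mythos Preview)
Your proof is correct and follows the same overall strategy as the paper: the forward direction is identical, and for the converse both you and the paper take $\m\in L$, observe that $\x^{\m^{+}}-\x^{\m^{-}}\in I_{L'}$, and conclude $\m\in L'$ from the characterization of pure binomials in a lattice ideal. The only difference is in how that last step is justified: the paper simply cites \cite[Lemma~3.2]{HLRV2014}, whereas you supply a self-contained argument by localizing to the Laurent ring and using the isomorphism $R'/I_{L'}R'\cong \K[\Z^r/L']$ (in the spirit of Eisenbud--Sturmfels, which the paper also cites elsewhere). Your version is thus a bit more explicit but not a genuinely different route.
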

\begin{proof}($\Rightarrow$:) If $L\subseteq L'$, then every $\m\in L$ lies in $L'$. Therefore, every binomial generator $\x^{\m^+}-\x^{\m^-}$ of $I_L$ lies in $I_{L'}$. So, $I_L\subseteq I_{L'}$. \\
($\Leftarrow$:) Assume that $I_L\subseteq I_{L'}$ and $\m\in L$. Then, the binomial $\x^{\m^+}-\x^{\m^-}$ of $I_L$ lies in $I_{L'}$. It follows from \cite[Lemma 3.2]{HLRV2014} that $\m \in L'$.
\end{proof}

\begin{pro}\label{p:VX(IL)lyingInTX} Let $L\subseteq L_{\bb}$ and $\rank L=n$. Then, $V_{X(\K)}(I_L)$ is a subgroup of $T_X(\K)$.
\end{pro}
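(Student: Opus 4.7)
The plan has two stages. First I would show the containment $V_{X(\K)}(I_L)\subseteq T_X(\K)$, which is the substantive part. Once this is in hand, the subgroup axioms reduce to the multiplicativity of monomials and are essentially formal.

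For the containment, I would argue by contradiction. Take $[P]\in V_{X(\K)}(I_L)$ with representative $P=(p_1,\dots,p_r)\in \K^r\setminus V(B)$, and set $J=\{i\in[r]:p_i=0\}$. Since the group $G\subset(\ov{\K}^*)^r$ acts on coordinates by units, $J$ depends only on the orbit $[P]$, not on the chosen representative. Assume $J\neq\emptyset$. Because $P\notin V(B)$, the rays $\{\rho_i:i\in J\}$ span a cone $\sig\in\Sigma$; as $\Sigma$ is simplicial, the vectors $\{\vv_i:i\in J\}\subset\Z^n$ are linearly independent. The crux of the argument is now to produce an element $\m'\in L$ with $\m'_i>0$ for every $i\in J$. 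By the linear independence, the relative interior of the dual cone $\sig^{\vee}\subset\R^n$ is non-empty and contains rational points; after clearing denominators we get $w\in\Z^n$ with $\vv_i\cdot w>0$ for every $i\in J$. Then $\m=\phi(w)\in L_{\bb}$ satisfies $\m_i=\vv_i\cdot w>0$ for $i\in J$. Lemma~\ref{l:(ell*satL)SubsetL} gives $\m':=\ell\m\in L$ with $\ell=|L_{\bb}/L|$, and $\m'$ inherits the positivity on $J$. The binomial $\x^{\m'^+}-\x^{\m'^-}\in I_L$ must vanish at $P$, but $J\subseteq\supp(\m'^+)$ forces $\x^{\m'^+}(P)=0$ while $\supp(\m'^-)\cap J=\emptyset$ gives $\x^{\m'^-}(P)\neq 0$, a contradiction. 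Hence $J=\emptyset$ and $P\in(\K^*)^r$, so $[P]\in T_X(\K)$.

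With the containment established, I would conclude by checking the subgroup axioms. The class $[1,\dots,1]$ is in $V_{X(\K)}(I_L)$ since $\x^{\m^+}-\x^{\m^-}$ evaluates to $0$ there. If $[P],[P']\in V_{X(\K)}(I_L)$ have representatives in $(\K^*)^r$, then the multiplicativity $\x^{\aa}(PP')=\x^{\aa}(P)\x^{\aa}(P')$ applied to $\aa=\m^+$ and $\aa=\m^-$ shows $\x^{\m^+}(PP')=\x^{\m^-}(PP')$, and the same argument for $P^{-1}$ handles inverses. The main obstacle is the containment, specifically the existence of an element of $L$ with prescribed positive signs on $J$; this rests on simpliciality of $\sig$ (so that $\sig^{\vee}$ has non-empty interior) together with Lemma~\ref{l:(ell*satL)SubsetL} (to pass from $L_{\bb}$ to $L$ at the cost of multiplication by $\ell$).
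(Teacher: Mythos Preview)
Your proof is correct and takes a genuinely different route from the paper's. For the containment $V_{X(\K)}(I_L)\subseteq T_X(\K)$, the paper sandwiches $I_L$ between $I_{\ell L_{\bb}}$ and $I_{L_{\bb}}$ via Lemmas~\ref{l:(ell*satL)SubsetL} and~\ref{l:latticeComparison}, reducing to the case $L=\ell L_{\bb}$; it then observes that $P\in V(I_{\ell L_{\bb}})$ iff $P^{\ell}\in V(I_{L_{\bb}})$, and invokes the identity $V(I_{L_{\bb}})\setminus V(B)=G\subset(\ov{\K}^*)^r$ to force all coordinates of $P$ to be nonzero. You instead argue directly: from $P\notin V(B)$ you extract a simplicial cone $\sig$ whose rays contain the vanishing coordinates, use linear independence of those ray generators to find $w$ with $\vv_i\cdot w>0$ on $J$, and then pass to $L$ via Lemma~\ref{l:(ell*satL)SubsetL} to manufacture a binomial in $I_L$ that cannot vanish at $P$. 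The paper's argument is shorter but leans on the equality $V(I_{L_{\bb}})\setminus V(B)=G$, which is asserted rather than proved there; your argument is more self-contained and in fact specializes (taking $\ell=1$) to a proof of that very equality, at the cost of invoking the simplicial structure of $\Sigma$ and a dual-cone construction more explicitly. The subgroup verification is handled the same way in both.
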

\begin{proof} By Lemma \ref{l:(ell*satL)SubsetL}, we have $\ell L_{\bb}\subseteq L \subseteq L_{\bb}$ for the order $\ell=|L_{\bb}/L|$. As smaller lattices correspond to smaller lattice ideals by Lemma \ref{l:latticeComparison}, it follows that $I_{\ell L_{\bb}}\subseteq I_L \subseteq I_{L_{\bb}}$. Thus, we have the inclusions below: 
\[
\{[1]\}=V_{X(\K)}(I_{L_{\bb}})\subseteq V_{X(\K)}(I_L)\subseteq V_{X(\K)}(I_{\ell L_{\bb}}),
\]
where $[1]$ denotes the orbit of the point $(1,\dots,1)\in \K^r \setminus V(B)$. So as to prove that $V_{X(\K)}(I_L)$ is a subset of the torus $T_X(\K)$, it suffices to reveal that $V_{X(\K)}(I_{\ell L_{\bb}})\subseteq T_X(\K)$. Given any point $[P]\in V_{X(\K)}(I_{\ell L_{\bb}})$, we have $P\in V(I_{\ell L_{\bb}})\setminus V(B)$. By definition, $P\in V(I_{\ell L_{\bb}})$ exactly when $\x^{\ell\m^{+}}(P)=\x^{\ell\m^{-}}(P)$ for all $\m=\m^{+}-\m^{-}\in L_{\bb}$. Since $\x^{\ell\m^{+}}(P)=\x^{\ell\m^{-}}(P)$ is equivalent to $\x^{\m^{+}}(P^\ell)=\x^{\m^{-}}(P^\ell)$, it follows that $P\in V(I_{\ell L_{\bb}})$ if and only if $P^\ell\in V(I_{L_{\bb}})$. As $B$ is a monomial ideal, $P\notin V(B)$ if and only if $P^\ell \notin V(B)$. Thus, $P\in V(I_{\ell L_{\bb}})\setminus V(B)$ implies that $P^\ell\in V(I_{L_{\bb}})\setminus V(B)=G$. As $G$ lies in the torus $(\overline{\K}^*)^r$, we have $P\in \T^r$ yielding $[P]\in T_X(\K)$. Hence, $V_{X(\K)}(I_{L})\subseteq T_X(\K)$. 

If $[P],[Q]\in V_{X(\K)}(I_{L})$, then we have the following:
$$\x^{\m^{+}}(P\cdot Q^{-1})=\x^{\m^{+}}(P)\x^{\m^{+}}( Q)^{-1}=\x^{\m^{-}}(P)\x^{\m^{-}}( Q)^{-1}=\x^{\m^{-}}(P\cdot Q^{-1}).$$
Thus, $[P]\cdot [Q]^{-1}$ lies in $V_{X(\K)}(I_{L})$ demonstrating that it is indeed a subgroup.
\end{proof}

\begin{rema} It is easy to see that $V_X(I_L)\cap T_X$ is a subgroup of $T_X$, when we only have $L\subseteq L_{\bb}$, see also \cite[Corollary 2.4]{sahin18}. But, if $\rank L < n$, then $V_X(I_L)$ need not lie inside the torus $T_X$, see \cite[Example 3.6]{sahin18}.
\end{rema}

 For any $\m=\m^+-\m^-\in L$, the binomial $\x^{\m^+}-\x^{\m^-}\in I_L$, hence we have $\x^{\m^+}(P)=\x^{\m^-}(P)$ for a point $[P]\in V_{X(\K)}(I_L)$. If we further assume that $L\subseteq L_{\bb}$ and $\rank L=n$, then $\x^{\m}(P)=1$ as $V_{X(\K)}(I_L)\subseteq T_X(\K)$. Therefore, a point $[P]\in V_{X(\K)}(I_L)$, defines a character $\dis \kay_{P}: L_{\bb}/L  \rightarrow \K^*$ via 
 \[
 \kay_P(\m+L):=\x^{\m}(P)=p_1^{m_1}\cdots p_r^{m_r},
 \] for all $\m=(m_1,\dots,m_r)\in L$ under these circumstances.

\begin{lm}\label{l:Subgroup=characters} Let $L\subseteq L_{\bb}$ and $\rank L=n$. Then the groups $V_{X(\K)}(I_L)$ and $\Hom(L_{\bb}/L,\K^*)$ are isomorphic.
\end{lm}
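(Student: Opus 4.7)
The plan is to show that the assignment $\Phi:[P]\mapsto \kay_P$, already defined in the paragraph preceding the lemma, is the desired group isomorphism. First I would check that $\Phi$ is well-defined on equivalence classes: if $P'=g\cdot P$ for some $g\in G$, then $\x^{\m}(P')=\x^{\m}(g)\x^{\m}(P)$, and the identification $G=V(I_{L_{\bb}})\cap(\overline{\K}^*)^r$ forces $\x^{\m}(g)=1$ for every $\m\in L_{\bb}$, so $\kay_{P'}=\kay_P$. Multiplicativity of $\x^{\m}$ in the argument $P$ gives the homomorphism property $\kay_{P\cdot Q}=\kay_P\cdot\kay_Q$, whence $\Phi$ is a group homomorphism.

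Next I would verify injectivity. If $\kay_P$ is the trivial character, then $\x^{\m}(P)=1$ for every $\m\in L_{\bb}$, which says precisely that $P\in V(I_{L_{\bb}})\cap\T^r=G$ (the containment $V_{X(\K)}(I_L)\subseteq T_X(\K)$ from Proposition \ref{p:VX(IL)lyingInTX} provides $P\in\T^r$). Hence $[P]=[1]$, so $\Phi$ has trivial kernel.

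For surjectivity, given $\chi\in\Hom(L_{\bb}/L,\K^*)$, I would lift it to $\tilde\chi:L_{\bb}\to\K^*$ by composing with the quotient map. Using the basis $\uu_1,\dots,\uu_n$ of $L_{\bb}$, set $t_i=\tilde\chi(\uu_i)\in\K^*$ and consider $P=(t_1,\dots,t_n,1,\dots,1)\in(\K^*)^r$. Because of our standing assumption that the first $n$ rays satisfy $\vv_i=\e_i$, the $j$-th entry of $\uu_i$ for $j\le n$ is $\delta_{ji}$, which yields $\x^{\uu_i}(P)=t_i$. By multiplicativity, $\x^{\m}(P)=\tilde\chi(\m)$ for every $\m\in L_{\bb}$; in particular, $\x^{\m}(P)=1$ for $\m\in L$, so $P\in V(I_L)$. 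Since $P\in(\K^*)^r\subseteq\K^r\setminus V(B)$, we conclude $[P]\in V_{X(\K)}(I_L)$ and $\kay_P=\chi$, proving that $\Phi$ is onto.

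The only subtle point, and thus the main obstacle, is the surjectivity step: one must use the explicit structure of the Cox construction (the normalization $\vv_i=\e_i$ for $i\le n$ recorded in the preliminaries) to produce a concrete $\K$-rational preimage with prescribed character, since a priori the $G$-orbit through a general candidate could require coordinates outside $\K$. Everything else reduces to the observation that the pairing $(\m,P)\mapsto\x^{\m}(P)$ between $L_{\bb}$ and $\T^r/G$ factors through $L_{\bb}/L$ on one side and $V_{X(\K)}(I_L)$ on the other.
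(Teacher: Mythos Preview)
Your proof is correct and follows essentially the same route as the paper's: both define the map $[P]\mapsto\kay_P$, check well-definedness via $G=V(I_{L_{\bb}})\cap(\overline{\K}^*)^r$, verify the homomorphism and injectivity properties in the same way, and for surjectivity produce a $\K$-rational preimage of a given character by prescribing the values $t_i=\chi(\uu_i+L)$. The only cosmetic difference is that you write down the explicit point $P=(t_1,\dots,t_n,1,\dots,1)$ using the normalization $\vv_i=\e_i$, whereas the paper phrases this via the exactness of $\mathfrak{P}^*$ and formula~(\ref{eq:piInverse}); but choosing $\lambda_1=\cdots=\lambda_d=1$ there yields exactly your $P$, so the arguments coincide.
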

\begin{proof} Let $\psi : V_{X(\K)}(I_L) \rightarrow \Hom(L_{\bb}/L,\K^*)$ be defined by $[P]\rightarrow \kay_P$. 
\begin{itemize}
    \item The map $\psi$ is well-defined, for if $g\in G=V(I_{L_{\bb}})\cap (\overline{\K}^*)^r$, then $\kay_{gP}=\kay_{P}$, since for all $\m\in L_{\bb}$, we have $$\x^{\m}(gP)=g^{\m}\x^{\m}(P)=\x^{\m}(P).$$ 
    \item $\psi$ is a homomorphism, since $\kay_{P\cdot P'}=\kay_P \cdot \kay_{P'}$. 
    \item It is injective, as $\kay_P(\m+L)=\x^\m(P)=1$, for all $\m\in L_{\bb}$, implies that $P\in G$ that is $[P]=[1]$.
    \item It is surjective, as we prove now. Let $\{\uu_1,\dots,\uu_n\}$ be the $\Z$-basis for the lattice $L_{\bb}$. For a given $\kay:L_{\bb}/L  \rightarrow \K^*$, let $t_i=\kay(\uu_i+L)$ for each $i\in [n]$. Since $\mathfrak{P}^*$ is exact and we can choose $(\lambda_1,\dots,\lambda_n)\in (\K^*)^d$ in (\ref{eq:piInverse}), there is a point $P \in (\K^*)^r$ such that $\pi(P)=(\x^{\uu_1}(P),\dots,\x^{\uu_n}(P))=(t_1,\dots,t_n)$. Every element $\m\in L_{\bb}$ is written as $\m=c_1\uu_1+\cdots+c_n\uu_n$ for some $(c_1,\dots,c_n)\in \Z^n$. Thus, we have $$\kay(\m+L)=t_1^{c_1}\cdots t_n^{c_n}=(\x^{\uu_1}(P))^{c_1} \cdots (\x^{\uu_n}(P))^{c_n}=\kay_P(\m+L).$$ 
If $\m\in L$, then it follows that $\x^{\m}(P)=\kay(\m+L)=1$ for all $\m\in L$ and so $[P]\in V_X(I_L)$. 
Hence, we have $\kay=\kay_P$.  
    \end{itemize}
These complete the proof.
\end{proof}

\begin{lm}\label{l:characters=elliffRootsOf1} Let $L\subseteq L_{\bb}$ and $\rank L=n$. If $d_i$ is an invariant factor of the group $L_{\bb}/L$ and $C_i:=\{c\in \K^* : c^{d_i}=1\}$ is the group of $d_i$-th roots of unity lying inside $\K$, for each $i\in [n]$, then the groups $\Hom(L_{\bb}/L,\K^*)$ and $C_1\times \dots \times C_n$ are isomorphic.
\end{lm}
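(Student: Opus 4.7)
The plan is to reduce the computation of $\Hom(L_\bb/L,\K^*)$ to Hom-groups of cyclic factors by invoking the structure theorem for finitely generated abelian groups (as arranged via the Smith normal form introduced in the introduction). Since $L_\bb \cong \Z^n$ via $\phi$ and $L \subseteq L_\bb$ has rank $n$, the quotient $L_\bb/L$ is a finite abelian group. Using the unimodular matrices $\cl A$ and $\cl C$ with $\cl D = \cl A \cl B \cl C = [d_1\e_1|\cdots|d_n\e_n]$, a change of basis identifies $L_\bb/L$ with $\Z/d_1\Z \oplus \cdots \oplus \Z/d_n\Z$.

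Next I would apply the standard fact that $\Hom(-,\K^*)$ sends finite direct sums to direct sums, yielding
\[
\Hom(L_\bb/L,\K^*) \;\cong\; \bigoplus_{i=1}^n \Hom(\Z/d_i\Z,\K^*).
\]
Then for each $i$, the map $\Hom(\Z/d_i\Z,\K^*) \to C_i$ sending a homomorphism $\chi$ to $\chi(\overline{1})$ is an isomorphism: its image lies in $C_i$ since $\chi(\overline{1})^{d_i} = \chi(\overline{d_i}) = 1$, it is injective because $\overline{1}$ generates $\Z/d_i\Z$, and it is surjective because any $c \in C_i$ determines a well-defined homomorphism $\overline{k}\mapsto c^k$. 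Stringing these isomorphisms together gives the result.

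There is no real obstacle here; this is essentially a formal consequence of the structure theorem together with the Hom-functor being additive on finite direct sums. The only thing to be careful about is that the invariant factors $d_i$ in the statement are exactly those appearing as the diagonal entries of the Smith normal form $\cl D$ of $\cl B$, which is guaranteed by the introductory discussion and the property $d_i \mid d_{i+1}$. Thus one or two lines suffice once the decomposition $L_\bb/L \cong \bigoplus_i \Z/d_i\Z$ is in hand.
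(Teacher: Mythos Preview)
Your proposal is correct and follows essentially the same approach as the paper: both use the adapted basis $\{\m_1,\dots,\m_n\}$ of $L_\bb$ with $\{d_1\m_1,\dots,d_n\m_n\}$ a basis of $L$, and identify a character on $L_\bb/L$ with the tuple of its values on the generators $\m_i+L$. The only difference is packaging: the paper writes down the single map $\Theta(\rho)=(\rho(\m_1),\dots,\rho(\m_n))$ and checks directly that it is a bijective homomorphism, whereas you factor the same map through the intermediate isomorphisms $L_\bb/L\cong\bigoplus_i\Z/d_i\Z$ and $\Hom(\bigoplus_i-,\K^*)\cong\prod_i\Hom(-,\K^*)$ before identifying $\Hom(\Z/d_i\Z,\K^*)\cong C_i$.
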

\begin{proof} Let $\{\m_1,\dots,\m_n\}$ be a $\Z$-basis for the lattice $L_{\bb}$ such that $$\{d_1\m_1,\dots,d_n\m_n\}$$ is a $\Z$-basis of $L$. 

We regard elements in $\Hom(L_{\bb}/L,\K^*)$ as (partial) characters $\rho: L_{\bb}\rightarrow \K^*$ whose restriction to $L$ is the identity $1_L$. So, we have $\rho(\m_i)^{d_i}=\rho(d_i\m_i)=1$, for each $i\in [n]$. Since $\rho$ is determined by the values $\rho(\m_1),\dots,\rho(\m_n)$, the map
$\Theta: \Hom(L_{\bb}/L,\K^*) \rightarrow C_1\times \dots \times C_n$,  defined by $\Theta(\rho)=(\rho(\m_1),\dots,\rho(\m_n))$ is well-defined.
\begin{itemize}
    \item $\Theta$ is a homomorphism as $\Theta(\rho \cdot \rho')=\Theta(\rho) \cdot \Theta(\rho')$.
    \item It is injective, since $\Theta(\rho)=(\rho(\m_1),\dots,\rho(\m_n))=(1,\dots,1)\implies \rho=1_{L_{\bb}}$.
    \item $\Theta$ is surjective as we demonstrate below. 
    
    Every choice of $(c_1,\dots,c_n)\in C_1\times \dots \times C_n$ determines an element 
$$\rho: L_{\bb}\rightarrow \K^* \quad \mbox{ via } \quad \rho(k_1\m_1+\cdots+k_n\m_n)=c_1^{k_1}\cdots c_n^{k_n}.$$ 
Furthermore, if $\m=k_1\m_1+\cdots+k_n\m_n\in L$ then $d_i$ divides $k_i$ for all $i$ and thus $\rho(\m)=1$, as $c_1^{k_1}=\cdots =c_n^{k_n}=1$. Hence, $\rho\in \Hom(L_{\bb}/L,\K^*)$.
\end{itemize}
Thus, $\Theta$ is the required isomorphism, completing the proof. 
\end{proof}

\begin{tm}\label{t:K-rationalpoints} Let $L\subseteq L_{\bb}$ and $\rank L =n$. If $d_i$ is an invariant factor of the group $L_{\bb}/L$ and $C_i:=\{c\in \K^* : c^{d_i}=1\}$ is the group of $d_i$-th roots of unity lying inside $\K$, for each $i\in [n]$, then $V_{X(\K)}(I_L)\cong C_1\times \dots \times C_n$, and thus the number of $\K$-rational points $|V_{X(\K)}(I_L)|$ is $|C_1|\cdots|C_n| \leq d_1\cdots d_n$. Moreover, $|V_{X(\K)}(I_L)|=d_1\cdots d_n \iff \K$ has all the $d_i$-th roots of unity, for every $i \in [n]$.
\end{tm}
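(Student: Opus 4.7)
The plan is to prove the theorem by simply chaining together the two preceding lemmas and then analyzing when the roots-of-unity subgroups $C_i$ achieve the maximum size $d_i$.

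First, I would invoke Lemma \ref{l:Subgroup=characters} to identify $V_{X(\K)}(I_L)$ with $\Hom(L_{\bb}/L,\K^*)$, which is legal precisely because the hypotheses $L\subseteq L_{\bb}$ and $\rank L=n$ are in force. Then I would apply Lemma \ref{l:characters=elliffRootsOf1} to identify $\Hom(L_{\bb}/L,\K^*)$ with $C_1\times\cdots\times C_n$, where the $d_i$ are the invariant factors produced by the Smith normal form of a basis matrix of $L$ inside $L_{\bb}$. Composing the two isomorphisms yields the first assertion $V_{X(\K)}(I_L)\cong C_1\times\cdots\times C_n$, and multiplicativity of cardinality immediately gives $|V_{X(\K)}(I_L)|=|C_1|\cdots|C_n|$.

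Next, I would bound $|C_i|$. Each $C_i$ is by definition the set of roots in $\K$ of the polynomial $x^{d_i}-1\in\K[x]$; since a polynomial of degree $d_i$ has at most $d_i$ roots in any field, we get $|C_i|\leq d_i$, and thus $|V_{X(\K)}(I_L)|\leq d_1\cdots d_n$.

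For the final equivalence, note that equality $|V_{X(\K)}(I_L)|=d_1\cdots d_n$ holds if and only if $|C_i|=d_i$ for every $i\in[n]$, because each factor is bounded above by $d_i$ and the product of the bounds equals $d_1\cdots d_n$. In turn, $|C_i|=d_i$ is equivalent to $\K$ containing all $d_i$-th roots of unity, since $C_i$ is by definition this set. There is no serious obstacle here; the only subtle point worth a sentence is justifying that equality in the product forces equality in each factor, which follows from $|C_i|\leq d_i$ for all $i$ together with $|C_1|\cdots|C_n|=d_1\cdots d_n$.
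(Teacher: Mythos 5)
Your proposal is correct and follows the same route as the paper's own proof: compose Lemma \ref{l:Subgroup=characters} with Lemma \ref{l:characters=elliffRootsOf1}, bound each $|C_i|$ by $d_i$ as roots of $x^{d_i}-1$, and observe that equality in the product forces $|C_i|=d_i$ for every $i$, which is exactly the statement that $\K$ contains all $d_i$-th roots of unity.
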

\begin{proof} It follows from Lemma \ref{l:Subgroup=characters} that $V_{X(\K)}(I_L)\cong \Hom(L_{\bb}/L,\K^*)$. We also have $\Hom(L_{\bb}/L,\K^*) \cong C_1\times \dots \times C_n$ by Lemma \ref{l:characters=elliffRootsOf1}, whence $V_{X(\K)}(I_L)\cong C_1\times \dots \times C_n$. Since $|C_i|\leq d_i$ in general, we have $|V_X(I_L)|=|C_1|\cdots|C_n| \leq d_1\cdots d_n$. 

The second part follows, since $|C_i|= d_i \iff $ $\K$ has all the $d_i$-th roots of unity, for every $i \in [n]$.
\end{proof}
%\begin{rema} $\Ext_{\Z}^1(\cA,\K^*)=1$ is satisfied when $\cA$ is free or $\K=\overline{\F}_q$.\end{rema}

\section{Subgroups defined by lattice ideals over a finite field}
\label{sec:subgroupsF_q}

In this section, we compute the number of $\F_q$-rational points of a subgroup of the algebraic group $T_X$, where $X$ is a toric variety over the finite field $\F_q$. By the virtue of Theorem \ref{t:K-rationalpoints}, this amounts to computing the order of the cyclic subgroup $C_i$ of $d_i$-th roots of unity lying inside the cyclic group $\F_q^*$ of order $q-1$.

\begin{tm}\label{t:Fq-rationalpoints}  If $L\subseteq L_{\beta}$ and $\rank L=n$, then the number of $\F_q$-rational points is given by $|V_{X(\F_q)}(I_L)|=(d_1,q-1)\cdots (d_n,q-1)$, where $(d_i,q-1)$ is the greatest common divisor of $q-1$ with the invariant factor $d_i$. 
\end{tm}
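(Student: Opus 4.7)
The plan is to deduce this theorem as a direct specialization of Theorem \ref{t:K-rationalpoints} to the case $\K = \F_q$, so the real content reduces to a classical count inside the cyclic group $\F_q^*$.

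First, I would invoke Theorem \ref{t:K-rationalpoints} with $\K=\F_q$. This gives an isomorphism $V_{X(\F_q)}(I_L) \cong C_1 \times \cdots \times C_n$, where $C_i = \{c \in \F_q^* : c^{d_i} = 1\}$ is the group of $d_i$-th roots of unity lying in $\F_q^*$. In particular, the number of $\F_q$-rational points factors as $|V_{X(\F_q)}(I_L)| = |C_1| \cdots |C_n|$, and the task is reduced to computing each $|C_i|$.

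The key step is then a standard cyclic-group calculation. Since $\F_q^*$ is cyclic of order $q-1$, the subgroup $C_i$ is precisely the kernel of the endomorphism $c \mapsto c^{d_i}$ of $\F_q^*$. In a cyclic group of order $m$, the equation $x^d = 1$ has exactly $\gcd(d,m)$ solutions; equivalently, $C_i$ is the unique subgroup of $\F_q^*$ of order $\gcd(d_i, q-1)$. Hence $|C_i| = (d_i, q-1)$ for each $i \in [n]$.

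Multiplying the orders yields $|V_{X(\F_q)}(I_L)| = (d_1,q-1)\cdots(d_n,q-1)$, as required. There is no real obstacle here, as the structural work was already carried out in Lemmas \ref{l:Subgroup=characters} and \ref{l:characters=elliffRootsOf1}; the only point worth emphasizing in the write-up is why $|C_i|=(d_i,q-1)$, which boils down to the fact that the number of $d$-th roots of unity in a cyclic group of order $m$ equals $\gcd(d,m)$.
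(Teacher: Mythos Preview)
Your proposal is correct and follows essentially the same route as the paper: both reduce to Theorem~\ref{t:K-rationalpoints} with $\K=\F_q$ and then compute $|C_i|=(d_i,q-1)$ using the cyclicity of $\F_q^*$. The only cosmetic difference is that the paper spells out the two-sided divisibility argument for $|C_i|=(d_i,q-1)$ rather than citing the standard fact about $d$-th roots in a cyclic group of order $m$.
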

\begin{proof} Fix $i\in [n]$. Recall that $C_i=\{c\in \K^* : c^{d_i}=1\}$ is a cyclic subgroup of $\F_q^*$ of order $|C_i|$, and so $|C_i|$ divides $q-1$. The order $|C_i|$ of a generator of $C_i$ divides $d_i$, as well. Hence, $|C_i|$ divides $(d_i,q-1)$. On the other hand, as $(d_i,q-1)$ divides $q-1$, there is a cyclic subgroup of $\F_q^*$ of order $(d_i,q-1)$, which is clearly contained in $C_i$, since $(d_i,q-1)$ divides $d_i$. Therefore, $|C_i|=(d_i,q-1)$. 

The proof now follows directly from Theorem \ref{t:K-rationalpoints}.
\end{proof}

Before going further, let us single out lattices corresponding to subgroups of $T_X$.

\begin{lm}\label{l:latticeOFsubgroup} If $Y$ is a subgroup of $T_X(\F_q)$ then  $I(Y)=I_L$ for a unique lattice $L$ with $(q-1)L_{\beta}\subseteq  L\subseteq L_{\beta}$. Therefore, subgroups of $T_X(\F_q)$ are of the form $V_{X(\F_q)}(I_L)$ for lattices with $(q-1)L_{\beta}\subseteq  L\subseteq L_{\beta}$.
\end{lm}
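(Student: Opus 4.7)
The plan is to combine the earlier algebraic characterization of vanishing ideals of torus subgroups with Fermat's little theorem in $\F_q^*$. The statement has three parts to verify: existence of $L$ with $I(Y)=I_L$, the containment $(q-1)L_\bb\subseteq L\subseteq L_\bb$, and uniqueness; the closing sentence then follows formally from Proposition \ref{p:VX(IL)lyingInTX}.

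First I would invoke the cited results from \cite{sahin18}: vanishing ideals of subgroups of $T_X$ are $L_\bb$-homogeneous lattice ideals of Krull dimension $d=r-n$, and such lattice ideals are exactly those $I_L$ with $L\subseteq L_\bb$. Hence $I(Y)=I_L$ for some sublattice $L\subseteq L_\bb$. The rank condition $\rank L=n$ is forced by the dimension identity $\dim I_L=r-\h(I_L)=r-\rank L$ together with $\dim I(Y)=d$; I will note this at the outset so that Lemma \ref{l:(ell*satL)SubsetL} and Proposition \ref{p:VX(IL)lyingInTX} can be applied downstream without hesitation.

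Next I would establish the key containment $(q-1)L_\bb\subseteq L$. Fix $\m\in L_\bb$ and $[P]\in Y$. Since $Y\subseteq T_X(\F_q)$, we may pick a representative $P\in(\F_q^*)^r$, as noted in Section \ref{s:prelim}. Then $\x^{\m^+}(P),\x^{\m^-}(P)\in \F_q^*$, so Fermat's little theorem gives
\[
\x^{(q-1)\m^+}(P)=\bigl(\x^{\m^+}(P)\bigr)^{q-1}=1=\bigl(\x^{\m^-}(P)\bigr)^{q-1}=\x^{(q-1)\m^-}(P).
\]
Independence of the representative is automatic because $(q-1)\m\in L_\bb$ and $G$ is defined by vanishing of $I_{L_\bb}$, so the binomial $\x^{(q-1)\m^+}-\x^{(q-1)\m^-}$ vanishes on all of $Y$ and therefore lies in $I(Y)=I_L$. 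The standard saturation property of lattice ideals (Lemma 3.2 of \cite{HLRV2014}, as used in the proof of Lemma \ref{l:latticeComparison}) then yields $(q-1)\m\in L$, establishing $(q-1)L_\bb\subseteq L$.

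Uniqueness of $L$ is immediate from Lemma \ref{l:latticeComparison}: if $I_L=I_{L'}$ for another lattice $L'\subseteq L_\bb$, the two inclusions give $L\subseteq L'\subseteq L$. For the concluding ``therefore'' sentence, one direction is the existence we just proved, and the converse holds because any lattice $L$ with $(q-1)L_\bb\subseteq L\subseteq L_\bb$ automatically has rank $n$ (since $(q-1)L_\bb$ does) and thus $V_{X(\F_q)}(I_L)$ is a subgroup of $T_X(\F_q)$ by Proposition \ref{p:VX(IL)lyingInTX}. The only real subtlety is ensuring that evaluation at $[P]$ is well-defined on $L_\bb$-homogeneous binomials even when $\m\notin L_\bb$ individually; but since $(q-1)\m\in L_\bb$ whenever $\m\in L_\bb$, this issue does not actually arise here, making the argument quite clean.
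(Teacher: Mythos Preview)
Your argument is correct and differs from the paper mainly in how you obtain the inclusion $(q-1)L_\bb\subseteq L$. The paper argues at the ideal level: from $Y\subseteq T_X(\F_q)$ it gets $I(T_X(\F_q))\subseteq I(Y)$, then quotes the precomputed identity $I(T_X(\F_q))=I_{(q-1)L_\bb}$ from \cite[Corollary~4.14(ii)]{sahin18}, and finishes with Lemma~\ref{l:latticeComparison}. You instead unpack this at the point level using Fermat's little theorem, showing directly that each binomial $\x^{(q-1)\m^+}-\x^{(q-1)\m^-}$ with $\m\in L_\bb$ vanishes on $Y$. Your route is more self-contained (it avoids the external computation of $I(T_X(\F_q))$), while the paper's is shorter once that citation is in hand; they are, of course, the same calculation under the hood.

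One small omission: in the ``therefore'' clause you need that every subgroup $Y$ actually equals $V_{X(\F_q)}(I(Y))$, not merely that $I(Y)=I_L$. The paper invokes \cite[Lemma~2.8]{sahin18} for this; you should cite the same (or note it is immediate since $Y$ is a finite set of $\F_q$-rational points). Your appeal to Proposition~\ref{p:VX(IL)lyingInTX} for the converse direction is fine but is more than the lemma asserts---the paper reads the ``therefore'' only as ``every subgroup has this form,'' not as a biconditional.
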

\begin{proof} By \cite[Theorem 2.9]{sahin18} $I(Y)=I_L$ for a unique lattice $L$ with $ L\subseteq L_{\beta}$. Since $Y\subseteq T_X(\F_q)$ implies $I(T_X(\F_q))\subseteq I(Y)$ and we have $I(T_X(\F_q))=I_{(q-1)L_{\beta}}$ by \cite[Corollary 4.14 (ii)]{sahin18}, the first claim follows from Lemma \ref{l:latticeComparison}.

If $Y$ is a subgroup of $T_X(\F_q)$, then $Y=V_{X(\F_q)}(I(Y))$ by \cite[Lemma 2.8]{sahin18} which is nothing but $V_{X(\F_q)}(I_L)$ by above for a unique lattice $L$ with $(q-1)L_{\beta}\subseteq L\subseteq L_{\beta}$. 
\end{proof}

Therefore, in order the compute the order of a subgroup of the torus $T_X(\F_q)$ of $X$, it is enough to focus on lattices with $(q-1)L_{\beta}\subseteq  L\subseteq L_{\beta}$.

\begin{lm} \label{l:inclusionIffExp} Let $L\subseteq L_{\beta}$ and $\rank L=n$. Then, $(q-1)L_{\beta}\subseteq  L$ if and only if $\exp(L_{\bb}/L)=d_n$ divides $q-1$. Recall that the exponent $\exp(H)$ of a finite group $H$ is the least common multiple of the orders of all elements in $H$.
\end{lm}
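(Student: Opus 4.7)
The plan is to translate the containment $(q-1)L_\beta\subseteq L$ into a divisibility condition on the exponent of the finite abelian group $H:=L_\beta/L$ and then invoke the structure theorem to identify this exponent with $d_n$.

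First I would record the standing fact that, by hypothesis ($L\subseteq L_\beta$ and $\rank L=n=\rank L_\beta$), the quotient $H=L_\beta/L$ is a finite abelian group, and by the fundamental structure theorem it is isomorphic to $\Z/d_1\Z\times\cdots\times\Z/d_n\Z$ with $d_1\mid d_2\mid\cdots\mid d_n$. Since the orders of elements in a direct product of cyclic groups $\Z/d_i\Z$ with $d_i\mid d_n$ all divide $d_n$, and the element $(0,\dots,0,1)$ has order exactly $d_n$, the least common multiple of all orders (i.e. $\exp(H)$) equals $d_n$.

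Next I would observe the purely group-theoretic equivalence
\[
(q-1)L_\beta\subseteq L \iff (q-1)(\m+L)=0 \text{ in } H \text{ for all } \m\in L_\beta \iff (q-1)\cdot H=0.
\]
The last condition says precisely that $q-1$ annihilates $H$, which happens if and only if every element of $H$ has order dividing $q-1$, i.e. if and only if $\exp(H)\mid q-1$. Combining with the identification $\exp(H)=d_n$ from the previous step finishes the proof.

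There is essentially no obstacle here; the statement is a direct unpacking of definitions together with the structure theorem. The only tiny point to be careful about is making the equivalence ``$(q-1)L_\beta\subseteq L \Leftrightarrow (q-1)\cdot H=0$'' precise, which follows immediately from the definition of the quotient and the fact that the coset map $L_\beta\twoheadrightarrow H$ is surjective (so ``for all $\m\in L_\beta$'' really does range over all elements of $H$).
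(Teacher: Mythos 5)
Your proof is correct and takes essentially the same approach as the paper: both rely on the structure theorem to identify $\exp(L_\beta/L)=d_n$ and then translate the containment $(q-1)L_\beta\subseteq L$ into the condition that $q-1$ annihilates the quotient. The paper phrases the equivalence a bit more concretely in terms of the explicit Smith-normal-form basis $\{\m_1,\dots,\m_n\}$ of $L_\beta$ with $\{d_1\m_1,\dots,d_n\m_n\}$ a basis of $L$, whereas you argue directly with the quotient group, but the content is identical.
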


\begin{proof} Recall that $L_{\beta}$ has a basis $\{\m_1,\dots,\m_n\}$ such that $\{d_1\m_1,\dots,d_n\m_n\}$ is a basis of $L$, where $d_i$'s are the invariant factors of $L_{\bb}/L$. By the structure theorem for finite abelian groups $L_{\bb}/L$ and $\Z_{d_1}\oplus \cdots \oplus \Z_{d_n}$ are isomorphic. Since $d_i$ divides $d_{i+1}$ for each $i\in [n-1]$, the exponent of $L_{\bb}/L$ is $d_n$. 

Therefore, if $(q-1)L_{\beta}\subseteq  L$, then $(q-1)\m_i\in L$. Since $\{d_1\m_1,\dots,d_n\m_n\}$ is a basis of $L$, it follows immediately that $d_i$ divides $q-1$, for every $i\in [n]$. Thus, $\exp(L_{\bb}/L)$ divides $q-1$. Conversely, if the exponent divides $q-1$, then each $d_i$ divides $q-1$, and hence $(q-1)\m_i\in L$ yielding to the inclusion $(q-1)L_{\beta}\subseteq  L$.
\end{proof}

 We are now ready to prove the main result of the section yielding to an efficient method to count the number of $\F_q$-rational points of the subgroup $V_X(I_L)$ of $T_X$ using a basis of $L$.

\begin{tm}\label{t:rationalpoints} If $(q-1)L_{\beta}\subseteq  L\subseteq L_{\beta}$, then the number of $\F_q$-rational points is given by $|V_{X(\F_q)}(I_L)|=|L_{\beta}/L|=d_1\cdots d_n$. 
\end{tm}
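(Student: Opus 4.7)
The plan is to combine the previous theorem with the characterization of the hypothesis from Lemma \ref{l:inclusionIffExp}, so that the gcd appearing in Theorem \ref{t:Fq-rationalpoints} simplifies to the invariant factor itself. Concretely, Theorem \ref{t:Fq-rationalpoints} already gives
\[
|V_{X(\F_q)}(I_L)| = (d_1,q-1)\cdots(d_n,q-1),
\]
so the task reduces to showing that each $(d_i,q-1)$ equals $d_i$, that is, that $d_i$ divides $q-1$ for every $i\in[n]$.

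First I would invoke Lemma \ref{l:inclusionIffExp}: the assumption $(q-1)L_\beta\subseteq L$ is equivalent to $\exp(L_\beta/L)=d_n$ dividing $q-1$. Then I would use the elementary divisibility $d_i\mid d_{i+1}$ (which is part of the defining property of the invariant factors recalled in the introduction) to propagate this divisibility down: since $d_i\mid d_n$ and $d_n\mid q-1$, we obtain $d_i\mid q-1$ for every $i\in[n]$. Consequently $(d_i,q-1)=d_i$ for each $i$, and substituting into Theorem \ref{t:Fq-rationalpoints} yields $|V_{X(\F_q)}(I_L)|=d_1\cdots d_n$.

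Finally, to identify this product with $|L_\beta/L|$, I would appeal to the fundamental structure theorem for finitely generated abelian groups as stated in the introduction of the paper: the invariant factors $d_1,\ldots,d_n$ of $L_\beta/L$ satisfy $|L_\beta/L|=d_1\cdots d_n$ (equivalently, $L_\beta/L\cong \Z_{d_1}\oplus\cdots\oplus\Z_{d_n}$, as was already used in the proof of Lemma \ref{l:inclusionIffExp}). Combining these equalities closes the argument.

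There is essentially no hard step here; the only point to be careful about is making the logical chain explicit, namely that the hypothesis $(q-1)L_\beta\subseteq L$ is used precisely to force $d_n\mid q-1$ via Lemma \ref{l:inclusionIffExp}, and that the chain of divisibilities among the invariant factors is what lets this single divisibility control all the $(d_i,q-1)$ simultaneously. Once that is noted, the proof is a short two-line deduction from Theorem \ref{t:Fq-rationalpoints} and the structure theorem.
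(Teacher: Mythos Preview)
Your proposal is correct and follows essentially the same route as the paper: invoke Lemma \ref{l:inclusionIffExp} to obtain $d_i\mid q-1$ for all $i$, then apply Theorem \ref{t:Fq-rationalpoints} so that each $(d_i,q-1)$ collapses to $d_i$. The paper's proof is slightly terser (it absorbs the divisibility chain $d_i\mid d_n$ into the citation of Lemma \ref{l:inclusionIffExp}, whose proof already records this), but the logic is identical.
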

\begin{proof} By Lemma \ref{l:inclusionIffExp}, it follows that $d_i$ divides $q-1$, and hence $(d_i,q-1)=d_i$, for every $i\in [n]$. Therefore,  $|V_{X(\F_q)}(I_L)|=d_1\cdots d_n$, by Theorem \ref{t:Fq-rationalpoints}. 
\end{proof}

\begin{rema} \label{r:barker} Theorem \ref{t:rationalpoints} follows also from Lemma \ref{l:Subgroup=characters} together with a more general fact which is communicated in private to us by Laurence Barker: 
given a finite
group $A$ and a finite field $\F_q$, 
$\Hom(A, \F_q^*)$ is isomorphic to $A$ if and only if $\exp(A)$ divides $q-1$. The proof provided by Barker is as follows. 

The usual dual of $A$, sometimes called the
Pontryagin dual, is defined to be
$A^* = \Hom(A, \Q/\Z)$. The group $\Q/\Z$, is isomorphic to the torsion subgroup of
$\mathbb C^*$. So $A^*$
can be identified with the group of
irreducible complex characters of $A$.

The standard result that $A$ is
non-canonically isomorphic to $A^*$ is plain when $A$ is cyclic. It is easy to
see that $(A_1 \times A_2)^* \cong A_1^* \times A_2^*$.
So the general case follows by decomposing $A$
as a direct product of cyclic groups. 

Viewing finite cyclic groups as embedded in
each other wherever possible, then $\Q/\Z$ is
the union of all those groups, in other words,
the colimit of the diagram of embeddings.
In particular, $A^*$ can be identified with
$\Hom(A, C)$ for any cyclic group $C$ that
is sufficiently large in the sense where
$C' \leq C$ provided $|C'|$ divides
$|C|$. The exact criterion for $C$ to be
sufficiently large is that every homomorphism
$A \rightarrow \Q/\Z$ has image contained in
the subgroup isomorphic to $C$. That is
equivalent to saying that the exponent of
$A$ divides $|C|$. Taking $C=\F_q^*$ completes the proof.
\end{rema}

\begin{coro} \label{c:order=Det} If $(q-1)L_{\beta}\subseteq  L\subseteq L_{\beta}$, then $|V_{X(\F_q)}(I_L)|=|\det  \verb|ML||$, where $ \verb|ML|$ is the matrix with entries $b_{ij}\in \Z$ such that the $j$-th basis element of $L$ is written as $b_{1j}\uu_1+\cdots+b_{nj}\uu_n$ in terms of the basis $\{\uu_1,\dots,\uu_n\}$ of $L_{\beta}$.
\end{coro}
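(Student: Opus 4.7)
The plan is to reduce the statement to a standard determinantal fact about indices of sublattices, and then invoke Theorem \ref{t:rationalpoints}. First, by Theorem \ref{t:rationalpoints}, the hypothesis $(q-1)L_{\beta}\subseteq L \subseteq L_{\beta}$ already gives
\[
|V_{X(\F_q)}(I_L)| = |L_{\beta}/L| = d_1 \cdots d_n,
\]
so the only thing left to prove is the purely lattice-theoretic identity $|L_{\beta}/L| = |\det \verb|ML||$.

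To establish this, I would observe that $\verb|ML|$ is precisely the change-of-basis matrix expressing a $\Z$-basis of $L$ in terms of the $\Z$-basis $\{\uu_1, \dots, \uu_n\}$ of $L_{\beta}$. Then I would bring $\verb|ML|$ to its Smith normal form: there exist unimodular matrices $\cl A$ and $\cl C$ of size $n$ such that
\[
\cl A \, \verb|ML| \, \cl C = \operatorname{diag}(d_1, \dots, d_n),
\]
where the $d_i$ are the invariant factors of $L_{\beta}/L$ appearing in the earlier setup. Taking determinants of both sides, since $\det \cl A, \det \cl C \in \{\pm 1\}$, yields $|\det \verb|ML|| = d_1 \cdots d_n$.

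Combining the two steps gives $|V_{X(\F_q)}(I_L)| = d_1 \cdots d_n = |\det \verb|ML||$, as required. There is really no obstacle here: the entire argument is a direct application of Theorem \ref{t:rationalpoints} together with the standard fact that the index of a finite-index sublattice equals the absolute value of the determinant of any integer matrix presenting it in a basis of the ambient lattice. The only small point of care is noting that the columns of $\verb|ML|$ (by the statement's convention) are the coordinate vectors of a basis of $L$ with respect to $\{\uu_1,\dots,\uu_n\}$, which is exactly the data needed for the Smith normal form reduction to produce the invariant factors $d_1, \dots, d_n$.
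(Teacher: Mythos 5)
Your proof is correct and shares the paper's first step, invoking Theorem \ref{t:rationalpoints} to reduce to the purely lattice-theoretic identity $|L_{\beta}/L|=|\det\verb|ML||$, but you establish that identity by a different (equally standard) argument. The paper transports $L$ through $\phi$ to a sublattice $\Lambda\subset\Z^n$ and then cites the geometric characterization of $|\det\verb|ML||$ as the volume of the fundamental parallelepiped, which equals both its number of lattice points and the index $[\Z^n:\Lambda]$. You instead bring the $n\times n$ matrix $\verb|ML|$ to Smith normal form $\operatorname{diag}(d_1,\dots,d_n)$ and take determinants, using unimodularity of the transformation matrices. Your route is shorter and ties the answer explicitly to $d_1\cdots d_n$, which Theorem \ref{t:rationalpoints} already produced, so the chain of equalities is tighter; the paper's route is a self-contained geometric recollection that does not mention the invariant factors at all. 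The only step you state without justification is that the invariant factors of the $n\times n$ matrix $\verb|ML|$ coincide with the $d_i$ introduced earlier via the $r\times n$ matrix $\cl B$; this is true because both present the same finite abelian group $L_{\beta}/L$ (the torsion of $\Z^r/L$, since $L_{\beta}$ is saturated in $\Z^r$), and invariant factors are unique, but it merits a sentence.
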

\begin{proof} By Theorem \ref{t:rationalpoints}, we have $|V_{X(\F_q)}(I_L)|=|L_{\beta}/L|$. Recall that the map $\phi$ of (\ref{eq:SES}) is an isomorphism between $\Z^n$ and $L_{\bb}$. If $\Lambda$ is the sublattice of $\Z^n$ which is isomorphic to $L$ under $\phi$, then the order $|L_{\beta}/L|=|\Z^n/\Lambda|$. 
Let $\verb|ML| :=\{\b_1,\dots,\b_n\}$ be a subset of $\Z^n$ constituting a $\Z$-basis for the lattice $\Lambda$. By abusing the notation, let us denote by  
$ \verb|ML|$ the matrix whose columns are the elements of the set $ \verb|ML|$.

It is known that the volume of the fundamental domain $\Pi( \verb|ML|)$ of $ \verb|ML| $ is $|\det  \verb|ML||$, where
$$\dis \Pi( \verb|ML|):=\Big\{\sum_{i=1}^{n} \lambda_i \b_i : 0 \leq \lambda_i < 1\Big\}.$$ 
 It is also known that $|\det  \verb|ML||$ is the number $|\Pi( \verb|ML|)\cap \Z^n|$ of lattice points inside $\Pi( \verb|ML|)$ as well as the index $[\Z^n:\Lambda]=|\Z^n/\Lambda|$.

Altogether, we have $|V_{X(\F_q)}(I_L)|=|L_{\beta}/L|=|\Z^n/\Lambda|=|\det  \verb|ML||$.
\end{proof}

\begin{coro}\label{c:c_i's} If $\{c_1\uu_1,\dots,c_n\uu_n\}$ constitute a basis for $L$, for positive integers $c_i$ dividing $q-1$, then $|V_{X(\F_q)}(I_L)|=c_1\cdots c_n$.
\end{coro}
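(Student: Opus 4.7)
The plan is to reduce the claim to Corollary \ref{c:order=Det}, for which we need to verify the hypothesis $(q-1)L_{\beta}\subseteq L\subseteq L_{\beta}$ and then identify the matrix $\verb|ML|$ explicitly.

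First I would check the two inclusions. The inclusion $L\subseteq L_{\beta}$ is immediate, since each basis element $c_i\uu_i$ is an integer combination of the basis $\{\uu_1,\dots,\uu_n\}$ of $L_{\beta}$. For the other inclusion, fix any $\m=a_1\uu_1+\cdots+a_n\uu_n\in L_{\beta}$. Because $c_i$ divides $q-1$, we can write $q-1=c_ik_i$ for some positive integer $k_i$, and then
\[
(q-1)\m=\sum_{i=1}^{n}a_i(q-1)\uu_i=\sum_{i=1}^{n}a_ik_i\,(c_i\uu_i)\in L,
\]
which gives $(q-1)L_{\beta}\subseteq L$.

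Next, I would apply Corollary \ref{c:order=Det}. By construction, the $j$-th basis element $c_j\uu_j$ of $L$, expressed in the basis $\{\uu_1,\dots,\uu_n\}$ of $L_{\beta}$, is simply $c_j\e_j$. Therefore the matrix $\verb|ML|$ is the diagonal matrix with diagonal entries $c_1,\dots,c_n$, and
\[
|V_{X(\F_q)}(I_L)|=|\det \verb|ML||=c_1\cdots c_n.
\]

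No serious obstacle is expected here: the corollary is essentially a reading of Corollary \ref{c:order=Det} in the case where the basis of $L$ is presented in pre-diagonalised form relative to the chosen basis of $L_{\beta}$. The only point to watch is that the $c_i$ need not themselves be the invariant factors of $L_{\beta}/L$ (they need not satisfy $c_i\mid c_{i+1}$), but this is harmless: the proof goes through the determinant/index formulation rather than through the invariant factor decomposition, and the product $c_1\cdots c_n$ equals $|L_{\beta}/L|$ regardless of whether the $c_i$ are in Smith form.
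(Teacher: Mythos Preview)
Your proposal is correct and follows essentially the same route as the paper: both arguments reduce to Corollary~\ref{c:order=Det} by observing that the matrix $\verb|ML|$ is diagonal with entries $c_1,\dots,c_n$, whence its determinant is $c_1\cdots c_n$. If anything, your write-up is slightly more careful than the paper's, since you explicitly verify the hypothesis $(q-1)L_{\beta}\subseteq L$ (using $c_i\mid q-1$) before invoking Corollary~\ref{c:order=Det}, whereas the paper leaves this step implicit.
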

\begin{proof} Since $\{\uu_1,\dots,\uu_n\}\subset \Z^r$ form a basis of $L_{\bb}$ and $\phi(\e_i)=\uu_i$, we observe that $\phi(c_i\e_i)=c_i\uu_i$, for all $i\in [n]$. So, $\verb|ML|=\{c_1\e_1,\dots,c_n\e_n\}$ is a basis for the lattice $\Lambda$ that appeared in the proof of Corollary \ref{c:order=Det} and $|\det(\verb|ML|)|=c_1\cdots c_n$.
Therefore, it follows that $\ell=|L_{\bb}/L|=c_1\cdots c_n$ and that $V_{X(\F_q)}(I_L)$ has $c_1\cdots c_n$ elements by Corollary \ref{c:order=Det}.
\end{proof}

 If $(q-1)L_{\beta}\subseteq  L\subseteq L_{\beta}$, then obviously $\ell=|L_{\bb}/L|$ is coprime to $p=\Char(\F_q)$. On the other hand, whenever $\ell=|L_{\bb}/L|$ is not divisible by a prime $p$, there are finite fields of characteristic $p$ verifying this condition as we demonstrate next.
 
\begin{pro} \label{p:existenceOfFq} Let $L\subseteq L_{\bb}$ and $\rank L=n$. If $\ell=|L_{\bb}/L|$ is not divisible by a prime $p$, then there is a finite field $\F_q$ of characteristic $p$ such that $\exp(L_{\bb}/L)$ divides $q-1$. Thus, we have $(q-1)L_{\bb}\subseteq L\subseteq L_{\bb}$ in this case.
\end{pro}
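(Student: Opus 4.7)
The plan is to exhibit an explicit $q$ that works, using the multiplicative order of $p$ modulo the exponent $e = \exp(L_\bb/L)$. Recall from Lemma \ref{l:inclusionIffExp} that the exponent equals the largest invariant factor $d_n$, and the condition we want ($(q-1)L_\bb \subseteq L$) is equivalent to $d_n \mid q-1$. So the whole problem reduces to producing a finite field $\F_q$ of characteristic $p$ with $d_n \mid q-1$.

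First I would observe that since $d_n$ divides $\ell = d_1\cdots d_n$ and by hypothesis $p \nmid \ell$, we have $\gcd(p,d_n)=1$. In particular, the residue class $[p] \in (\Z/d_n\Z)^*$ is a unit, so it has finite multiplicative order; call this order $s$. Then $p^s \equiv 1 \pmod{d_n}$, equivalently $d_n \mid p^s - 1$.

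Next I would take $q=p^s$ and $\F_q = \F_{p^s}$, a finite field of characteristic $p$. By construction $\exp(L_\bb/L) = d_n$ divides $q-1$, so Lemma \ref{l:inclusionIffExp} gives $(q-1)L_\bb \subseteq L$, which combined with the standing hypothesis $L \subseteq L_\bb$ yields the desired chain of inclusions.

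There is essentially no obstacle here beyond invoking the correct pieces: the identification $\exp(L_\bb/L) = d_n$ (already done in Lemma \ref{l:inclusionIffExp}), the coprimality $\gcd(p,d_n)=1$ extracted from $p \nmid \ell$, and the elementary fact that a unit in $(\Z/d_n\Z)^*$ has finite multiplicative order. The entire argument fits in a few lines and produces a concrete $q = p^s$.
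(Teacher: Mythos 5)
Your proof is correct, and it is essentially the same argument as the paper's phrased in slightly different number-theoretic language. The paper takes $\F_q$ to be the splitting field of $x^{\ell}-1$ over $\F_p$, using $p\nmid \ell$ to see the roots are distinct, form a cyclic group of order $\ell$ in $\F_q^*$, and hence $\ell\mid q-1$, which forces $d_n\mid q-1$. You instead work directly with $d_n=\exp(L_\bb/L)$: since $d_n\mid\ell$ and $p\nmid\ell$, the class $[p]$ is a unit modulo $d_n$ with some order $s$, and you take $q=p^s$. These are two ways of saying the same thing — the splitting field of $x^{\ell}-1$ over $\F_p$ is $\F_{p^t}$ where $t$ is the multiplicative order of $p$ mod $\ell$ — except that by working modulo $d_n$ rather than modulo $\ell$ you typically land on a smaller field (your $s$ divides the paper's $t$). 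Both invoke Lemma~\ref{l:inclusionIffExp} for the final inclusion, so the logical skeleton is identical; your version is a touch more economical.
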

\begin{proof} Let $\F_q$ be the splitting field of $x^{\ell}-1$ over $\F_p$. As $p$ does not divide $\ell$, $x^{\ell}-1$ and its derivative $\ell x^{\ell-1}$ have no roots in common. So, roots of $x^{\ell}-1$ are all distinct. It is easy to see that they form a cyclic subgroup of $\F_q^*$ of order $\ell$. Thus, $\ell$ divides $q-1$ whence so does $\exp(L_{\bb}/L)$. 

The second part follows from Lemma \ref{l:inclusionIffExp}.
\end{proof}

We close the section with an interesting example illustrating the results.
\begin{ex}
\label{ex:15elements} Consider the Hirzebruch surface $X=\cl H_{2}$ whose fan in $\R^2$ have rays generated by 
$$\vv_1=(1,0), \vv_2=(0,1), \vv_3=(-1,2), \mbox{ and }\: \vv_4=(0,-1).$$ 

 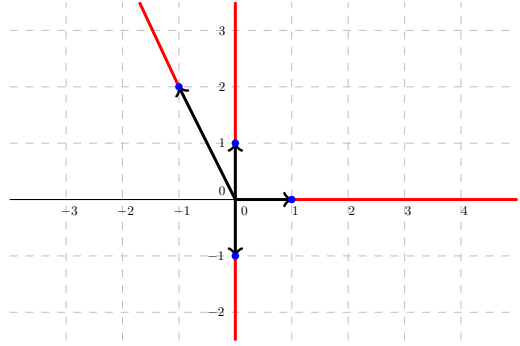
\begin{figure}[htb]
\centering
\begin{tikzpicture}[scale=0.75]
\draw (-4,0)  -- (5,0);
\draw (0,-2.5)  -- (0,3.5);
\draw[dashed,lightgray][line width=0mm] (-3,-2.5)  -- (-3,3.5);
\draw[dashed,lightgray][line width=0mm] (-2,-2.5)  -- (-2,3.5);
\draw[dashed,lightgray][line width=0mm] (-1,-2.5)  -- (-1,3.5);
\draw[dashed,lightgray][line width=0mm] (1,-2.5)  -- (1,3.5);
\draw[dashed,lightgray][line width=0mm] (2,-2.5)  -- (2,3.5);
\draw[dashed,lightgray][line width=0mm] (3,-2.5)  -- (3,3.5);
\draw[dashed,lightgray][line width=0mm] (4,-2.5)  -- (4,3.5);
\draw[dashed,lightgray][line width=0mm] (-4,1)  -- (5,1);
\draw[dashed,lightgray][line width=0mm] (-4,2)  -- (5,2);
\draw[dashed,lightgray][line width=0mm](-4,3)  -- (5,3);
\draw[dashed,lightgray][line width=0mm] (-4,-1)  -- (5,-1);
\draw[dashed,lightgray][line width=0mm] (-4,-2)  -- (5,-2);
\draw[thick,->][line width=0.4mm] (0,0) -- (-1,2);
\draw[thick,->][line width=0.4mm] (0,0) -- (0,1);
\draw[thick,->][line width=0.4mm] (0,0) -- (1,0);
\draw[thick,->][line width=0.4mm] (0,0) -- (0,-1);
\draw[red][line width=0.4mm] (-1,2)  -- (-1.7,3.5);
\draw (-1,2) node[circle,fill,blue,inner sep=1pt] {};
\draw[red][line width=0.4mm] (0,1)  -- (0,3.5);
\draw (0,1) node[circle,fill,blue,inner sep=1pt] {};
\draw[red][line width=0.4mm] (1,0)  -- (5,0);
\draw (1,0) node[circle,fill,blue,inner sep=1pt] {};
\draw[red][line width=0.4mm] (0,-1)  -- (0,-2.5);
\draw (0,-1) node[circle,fill,blue,inner sep=1pt] {};
\node[inner sep=0,anchor=west,{scale=0.5}] (note1) at (0.1,-0.2) {$0$};
\node[inner sep=0,anchor=west,{scale=0.5}] (note1) at (-0.3,0.15) {$0$};
\node[inner sep=0,anchor=west,{scale=0.5}] (note1) at (1,-0.2) {$1$};
\node[inner sep=0,anchor=west,{scale=0.5}] (note1) at (2,-0.2) {$2$};
\node[inner sep=0,anchor=west,{scale=0.5}] (note1) at (3,-0.2) {$3$};
\node[inner sep=0,anchor=west,{scale=0.5}] (note1) at (4,-0.2) {$4$};
\node[inner sep=0,anchor=west,{scale=0.5}] (note1) at (-1.1,-0.2) {$-1$};
\node[inner sep=0,anchor=west,{scale=0.5}] (note1) at (-2.1,-0.2) {$-2$};
\node[inner sep=0,anchor=west,{scale=0.5}] (note1) at (-3.1,-0.2) {$-3$};
\node[inner sep=0,anchor=west,{scale=0.5}] (note1) at (-0.3,1) {$1$};
\node[inner sep=0,anchor=west,{scale=0.5}] (note1) at (-0.3,2) {$2$};
\node[inner sep=0,anchor=west,{scale=0.5}] (note1) at (-0.3,3) {$3$};
\node[inner sep=0,anchor=west,{scale=0.5}] (note1) at (-0.5,-1) {$-1$};
\node[inner sep=0,anchor=west,{scale=0.5}] (note1) at (-0.5,-2) {$-2$};

\end{tikzpicture}
\caption{The fan for the Hirzebruch surface.}
\label{F:FanForH2}
%\caption{A sequence of bisections.}
%\label{fig:bisection}
\end{figure}

The exact sequence in (\ref{eq:SES}) becomes:
$$\dis \xymatrix{ \mathfrak{P}: 0  \ar[r] & \Z^2 \ar[r]^{\phi} & \Z^4 \ar[r]^{\beta}& \Z^2 \ar[r]& 0},$$  
where $$\phi=\begin{bmatrix}
1 & 0 & -1& ~~0 \\
0 & 1 & ~~~2& -1
\end{bmatrix}^T   \quad  \mbox{ and} \quad \beta=\begin{bmatrix}
1 & -2 & 1& 0 \\
0 & ~~~1 & 0&1  
\end{bmatrix}.$$ 
Thus, the class group is $\cl A=\Z^2$ grading the Cox ring  $S=\F_q[x_1,x_2,x_3,x_4]$ of $X$ via $$\deg_{\cl A}(x_1)=\deg_{\cl A}(x_3)=(1,0), \deg_{\cl A}(x_2)=(-2,1), \deg_{\cl A}(x_4)=(0,1).$$
Clearly, a $\Z$-basis of $L_{\bb}$ is given by $\uu_1=(1,0,-1,0)$ and $\uu_2=(0,1,2,-1)$. 

Consider the lattice $L$ spanned over $\Z$ by $3\uu_1$ and $5\uu_2$. Since $B=\{3\e_1,5\e_2\}$ is a basis for the lattice $\Lambda$ that appeared in the proof of Corollary \ref{c:order=Det} we have $\ell=|L_{\bb}/L|=\det B=15$ and so $15L_{\bb}\subset L \subset L_{\bb}$ by Lemma \ref{l:(ell*satL)SubsetL}. Hence, if $15$ divides $q-1$, which is the case for instance when $q=16$ or $q=31$, then the subgroup $V_{X(\F_q)}(I_L)$ has $15$ members of $T_X$ over the finite field $\F_q$.  
\end{ex}

\section{Finite Nullstellensatz for lattice ideals}

In this section, we prove a Nullstellensatz type result over $\F_q$ establishing a one-to-one correspondence between the subgroups $V_{X(\F_q)}(I_L)$ and lattice ideals $I_L$,  such that $(q-1)L_{\beta}\subseteq  L\subseteq L_{\beta}$.

\begin{lm}\label{l:Subgroups=overClosure} If $(q-1)L_{\beta}\subseteq  L\subseteq L_{\beta}$, then the subgroups $V_{X(\K)}(I_L)$ and $V_{X(\F_q)}(I_L)$ are the same, for any field $\K$ which is an extension of $\F_q$.
\end{lm}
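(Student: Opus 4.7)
The plan is to show a two-sided inclusion, where one direction is immediate and the other is obtained by a cardinality comparison. The trivial direction $V_{X(\F_q)}(I_L)\subseteq V_{X(\K)}(I_L)$ comes for free: if $P\in \F_q^r\setminus V(B)$ represents an $\F_q$-point in $V_{X(\F_q)}(I_L)$, then $P\in\K^r\setminus V(B)$ as well, and $F(P)=0$ for every homogeneous $F\in I_L$ regardless of whether we regard $P$ as $\F_q$- or $\K$-valued, so $[P]\in V_{X(\K)}(I_L)$.

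For the reverse inclusion, I would invoke Theorem \ref{t:K-rationalpoints} applied over both $\K$ and $\F_q$. This gives
\[
V_{X(\K)}(I_L)\;\cong\; C_1^{\K}\times\cdots\times C_n^{\K},\qquad V_{X(\F_q)}(I_L)\;\cong\; C_1^{\F_q}\times\cdots\times C_n^{\F_q},
\]
where $C_i^{\K}$, respectively $C_i^{\F_q}$, denotes the group of $d_i$-th roots of unity in $\K$, respectively in $\F_q$. The hypothesis $(q-1)L_\beta\subseteq L\subseteq L_\beta$ allows me to apply Lemma \ref{l:inclusionIffExp}, yielding that the exponent $d_n$ of $L_\beta/L$ divides $q-1$. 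Because the invariant factors satisfy $d_i\mid d_n$ for each $i\in[n]$, every $d_i$ also divides $q-1$.

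Now the key arithmetic step: since $\F_q^*$ is cyclic of order $q-1$ and $d_i\mid q-1$, there is a unique cyclic subgroup of $\F_q^*$ of order $d_i$, and it consists precisely of the $d_i$-th roots of unity. Hence $|C_i^{\F_q}|=d_i$, and because there are at most $d_i$ such roots in the extension field $\K$, we also have $|C_i^{\K}|=d_i$ and in fact $C_i^{\F_q}=C_i^{\K}$. Multiplying these cardinalities gives
\[
|V_{X(\K)}(I_L)|=d_1\cdots d_n=|V_{X(\F_q)}(I_L)|.
\]

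The final step is to combine the inclusion $V_{X(\F_q)}(I_L)\subseteq V_{X(\K)}(I_L)$ with the equality of their (finite) cardinalities; this forces $V_{X(\K)}(I_L)=V_{X(\F_q)}(I_L)$. I do not foresee a serious obstacle here, since the bulk of the work has already been done in Theorem \ref{t:K-rationalpoints} and Lemma \ref{l:inclusionIffExp}; the only thing one must be slightly careful about is that equality of counts of representatives at the character level transfers to equality of orbits in $[\K^r\setminus V(B)]/G$, but this is guaranteed by the isomorphism of Lemma \ref{l:Subgroup=characters} applied over both fields simultaneously.
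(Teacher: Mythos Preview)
Your proposal is correct and follows essentially the same approach as the paper: establish the trivial inclusion $V_{X(\F_q)}(I_L)\subseteq V_{X(\K)}(I_L)$, then use the divisibility $d_i\mid q-1$ (the paper packages this via Theorem~\ref{t:rationalpoints}, you unpack it via Lemma~\ref{l:inclusionIffExp}) to conclude both sets have cardinality $d_1\cdots d_n$ by Theorem~\ref{t:K-rationalpoints}, forcing equality. The only cosmetic difference is that the paper cites Theorem~\ref{t:rationalpoints} as a black box for the $\F_q$-count, whereas you redo its content inline; your closing remark about orbits versus characters is unnecessary caution, since the inclusion plus finite equal cardinalities already suffices.
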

\begin{proof} Since $\F_q^* \subset \K^*$, we have the inclusion $V_{X(\F_q)}(I_L)\subseteq V_{X(\K)}(I_L)$. By Theorem \ref{t:rationalpoints}, $|V_{X(\F_q)}(I_L)|=d_1\cdots d_n$, and so $\F_q^*$ has all the $d_i$-th roots of unity. Thus, $\K^*$ has all the $d_i$-th roots of unity and hence $|V_{X(\K)}(I_L)|=d_1\cdots d_n$ as well, by Theorem \ref{t:K-rationalpoints}. Having the same orders, $V_{X(\K)}(I_L)=V_{X(\F_q)}(I_L)$ for any field $\K$ which is an extension of $\F_q$. 
\end{proof}

It is now time to demonstrate that the necessary condition suggested by the Lemma \ref{l:latticeOFsubgroup} for a lattice to correspond to a vanishing ideal of a subgroup is indeed sufficient.
\begin{tm}\label{t:nullstellensatz} If $(q-1)L_{\beta}\subseteq  L\subseteq L_{\beta}$, then $I(V_{X(\K)}(I_L))=I_L$, for any field $\K$ which is an extension of $\F_q$.
\end{tm}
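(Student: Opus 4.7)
The plan is to reduce to the finite case $\K=\F_q$ and then bootstrap via the size count of Theorem \ref{t:rationalpoints}. First, I will invoke Lemma \ref{l:Subgroups=overClosure} to replace $V_{X(\K)}(I_L)$ with $V_{X(\F_q)}(I_L)$; since the vanishing ideal of a set depends only on the set of points (and the coordinate ring is defined over $\F_q$), this reduces the identity $I(V_{X(\K)}(I_L)) = I_L$ to $I(V_{X(\F_q)}(I_L)) = I_L$. Here I should be a bit careful: the vanishing ideal computed over $\K$ versus over $\F_q$ formally lives in different polynomial rings, but since $V_{X(\K)}(I_L) = V_{X(\F_q)}(I_L)$ consists of $\F_q$-rational points and $I_L$ is generated by binomials with integer (hence $\F_q$) coefficients, the two vanishing ideals are obtained from one another by extension/restriction of scalars, so it is enough to prove the equality over $\F_q$.

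Next, one inclusion is automatic: every generator $\x^{\m^+}-\x^{\m^-}$ of $I_L$ vanishes on $V_{X(\F_q)}(I_L)$, so $I_L \subseteq I(V_{X(\F_q)}(I_L))$. For the reverse inclusion, observe that $V_{X(\F_q)}(I_L)$ is a subgroup of $T_X(\F_q)$ by Proposition \ref{p:VX(IL)lyingInTX} (the rank hypothesis $\rank L = n$ is satisfied because $(q-1)L_\beta \subseteq L$). Therefore Lemma \ref{l:latticeOFsubgroup} applies and yields a unique lattice $L'$ with $(q-1)L_\beta \subseteq L' \subseteq L_\beta$ and
\[
I\bigl(V_{X(\F_q)}(I_L)\bigr) = I_{L'}.
\]
The containment $I_L \subseteq I_{L'}$ together with Lemma \ref{l:latticeComparison} gives $L \subseteq L'$.

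To upgrade this to equality, I will compare orders. By the defining property of the vanishing ideal (cited in the proof of Lemma \ref{l:latticeOFsubgroup} from \cite[Lemma 2.8]{sahin18}), $V_{X(\F_q)}(I_{L'}) = V_{X(\F_q)}(I(Y)) = Y = V_{X(\F_q)}(I_L)$. Since both $L$ and $L'$ satisfy the hypotheses of Theorem \ref{t:rationalpoints}, this forces
\[
|L_\beta / L| \;=\; |V_{X(\F_q)}(I_L)| \;=\; |V_{X(\F_q)}(I_{L'})| \;=\; |L_\beta / L'|.
\]
Combined with $L \subseteq L' \subseteq L_\beta$, this compels $L = L'$, and consequently $I(V_{X(\F_q)}(I_L)) = I_{L'} = I_L$, as desired.

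The only real subtlety will be the opening move, namely justifying that proving the statement for the coefficient field $\F_q$ suffices. The bulk of the work is already packaged: once the reduction is in place, the argument is a two-line combination of Lemmas \ref{l:latticeComparison}, \ref{l:latticeOFsubgroup} and Theorem \ref{t:rationalpoints}, with no delicate algebraic manipulation required.
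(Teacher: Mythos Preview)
Your argument is correct and takes a genuinely different route from the paper's.

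The paper proceeds via primary decomposition: it writes $I(V_{X(\K)}(I_L))=\bigcap_{[P]} I([P])$, then shows that every minimal prime $\fp$ of the radical ideal $I_L$ must equal some $I([P])$ by passing to the algebraic closure $\overline{\F}_q$, producing a point $[P]\in V_{X(\overline{\F}_q)}(\fp)$, pulling it back to $V_{X(\F_q)}(I_L)$ via Lemma~\ref{l:Subgroups=overClosure}, and matching heights. This forces the two radical ideals to share the same primary decomposition.

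You instead stay entirely inside the lattice-theoretic framework already built in Section~\ref{sec:subgroupsF_q}: Lemma~\ref{l:latticeOFsubgroup} hands you a lattice $L'$ with $I(V_{X(\F_q)}(I_L))=I_{L'}$, Lemma~\ref{l:latticeComparison} gives $L\subseteq L'$, and then Theorem~\ref{t:rationalpoints} applied to both lattices equates the indices $|L_\beta/L|=|L_\beta/L'|$, forcing $L=L'$. This avoids primary decomposition, radicality of $I_L$, and the passage through $\overline{\F}_q$; it is shorter and uses only the counting machinery the paper has just finished developing. The paper's approach, on the other hand, is more self-contained in that it does not lean on the external result \cite[Theorem~2.9]{sahin18} (packaged inside Lemma~\ref{l:latticeOFsubgroup}) that vanishing ideals of subgroups are lattice ideals. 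Your handling of the base-field reduction is also fine: the paper itself computes $I(-)$ in $\F_q[x_1,\dots,x_r]$, so once Lemma~\ref{l:Subgroups=overClosure} identifies the point sets, the two vanishing ideals coincide.
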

\begin{proof} By the same reason in the proof of \cite[Theorem 5.1(iii)]{sahin18}, we have the following minimal primary decomposition for the vanishing ideal of $V_{X(\K)}(I_L)$ in $S=\F_q[x_1,\dots,x_r]$:
$$\dis I(V_{X(\K)}(I_L))=\bigcap_{[P]\in V_{X(\K)}(I_L)} I([P]).$$

Suppose now that $\fp\subset S$ is a minimal prime of the radical ideal $I_L\subseteq I(V_{X(\K)}(I_L))$. Then, as $\fp \supseteq I_L$, we have $V_{X(\overline{\F}_q)}(\fp) \subseteq V_{X(\overline{\F}_q)}(I_L)$. As $V_{X(\overline{\F}_q)}(\fp)$ cannot be empty over the algebraically closed field $\overline{\F}_q$, it has a point $[P]\in V_{X(\overline{\F}_q)}(I_L)$. Since $V_{X(\overline{\F}_q)}(I_L)=V_{X(\K)}(I_L)=V_{X(\F_q)}(I_L)$, by Lemma \ref{l:Subgroups=overClosure}, $[P]\in V_{X(\K)}(I_L)$ as well. Therefore, $[P]\in V_{X(\K)}(\fp)$ so that $\fp \subseteq I(V_{X(\K)}(\fp)) \subseteq I([P]) \subset S$. So, $\h(\fp) \leq \h(I([P]))=n$. Since we also have $\fp \supseteq I_L$, it follows that $\h(\fp) \geq \h(I_L)=n$. Thus, $\h(\fp) = \h(I([P]))=n$ and these primes must coincide:  $\fp=I([P])$. 
Therefore, the two ideals $I_L \subseteq I(V_{X(\K)}(I_L))$ having the same minimal primary decomposition must coincide: $I_L= I(V_{X(\K)}(I_L))$.
\end{proof}

Recall that subgroups of $T_{X}(\F_q)$ are of the form $Y_Q=\{[{\bf t}^{\q_{1}}:\cdots:{\bf t}^{\q_{r}}]|{\bf t}\in (\F_q^{*})^s\}$ for a matrix $Q=[\q_1 \q_2\cdots \q_r]\in M_{s\times r}(\Z)$ by \cite[Theorem 3.2 and Corollary 3.7]{sahin18}. In \cite{EsmaTJM22}, a handier description of the lattice of the ideal $I(Y_Q)$ is given, in terms of $Q$ and $\beta$, under a condition on the lattice $\mathcal{L}=Q L_{\beta}=\{Q\m| \m\in L_{\beta}\}$. Before stating this result, let us remind that $\mathcal{L}:(q-1)=\{\m\in {\Z}^s|(q-1)\m \in \mathcal{L}\}$ and that the lattice $L_Q\subseteq \Z^r$ is the kernel of the multiplication map defined by $Q$, i.e. $L_Q=\{z\in \Z^r \: : \: Qz=0\}$. 

\begin{tm}[\cite{EsmaTJM22}] \label{t:Baranhold}
Let ${L}=(L_Q \cap {L_\beta})+(q-1)L_\beta$. Then 
$I_{L}\subseteq I(Y_Q)$. The equality holds if and only if $\mathcal{L}=\mathcal{L}:(q-1)$.
\end{tm}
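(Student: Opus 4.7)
The argument splits into two parts. For the containment $I_L \subseteq I(Y_Q)$, any $\m \in L$ decomposes as $\m = \m_1 + (q-1)\m_2$ with $\m_1 \in L_Q \cap L_\bb$ and $\m_2 \in L_\bb$. For a point $P = (\t^{\q_1},\dots,\t^{\q_r})$ of $Y_Q$, one computes
\begin{equation*}
\x^{\m}(P) \;=\; \t^{Q\m} \;=\; \t^{Q\m_1 + (q-1)Q\m_2} \;=\; \t^{(q-1)Q\m_2} \;=\; 1,
\end{equation*}
using $Q\m_1 = 0$ and $t_i^{q-1} = 1$ for every $t_i \in \F_q^*$. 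Hence $\x^{\m^+}(P) = \x^{\m^-}(P)$, so the binomial generators of $I_L$ vanish on $Y_Q$.

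For the equivalence, Lemma \ref{l:latticeOFsubgroup} gives $I(Y_Q) = I_{L'}$ for a unique lattice $L'$ with $(q-1)L_\bb \subseteq L' \subseteq L_\bb$, and Lemma \ref{l:latticeComparison} reduces the claim to showing $L = L'$ iff $\mathcal{L} = \mathcal{L}:(q-1)$. The key preparatory step is to describe $L'$ explicitly: a binomial $\x^{\m^+} - \x^{\m^-}$ with $\m \in L_\bb$ vanishes on $Y_Q$ iff $\t^{Q\m} = 1$ for every $\t \in (\F_q^*)^s$, which, since the character $\t \mapsto \t^{\a}$ of $(\F_q^*)^s$ is trivial exactly when $\a \in (q-1)\Z^s$, is equivalent to $Q\m \in (q-1)\Z^s$. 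Thus
\begin{equation*}
L' \;=\; \{\, \m \in L_\bb \;:\; Q\m \in (q-1)\Z^s \,\}.
\end{equation*}

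The inclusion $L \subseteq L'$ is now immediate from the first part. For the reverse inclusion under the hypothesis $\mathcal{L}:(q-1) \subseteq \mathcal{L}$, take $\m \in L'$ and write $Q\m = (q-1)\y$ with $\y \in \Z^s$; observe $\y \in \mathcal{L}:(q-1)$ since $(q-1)\y = Q\m \in Q L_\bb = \mathcal{L}$, so by assumption $\y = Q\m_2$ for some $\m_2 \in L_\bb$, and then $\m_1 := \m - (q-1)\m_2 \in L_\bb$ satisfies $Q\m_1 = 0$, giving $\m = \m_1 + (q-1)\m_2 \in L$. Conversely, if $L' = L$, any $\y \in \mathcal{L}:(q-1)$ lifts to some $\m \in L_\bb$ with $Q\m = (q-1)\y$, whence $\m \in L' = L$, and the decomposition $\m = \m_1 + (q-1)\m_2$ with $Q\m_1=0$ forces $\y = Q\m_2 \in \mathcal{L}$. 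Since $\mathcal{L} \subseteq \mathcal{L}:(q-1)$ is automatic, this establishes the stated equivalence. No step is really hard; the only delicate point is verifying that $\y$ sweeps out all of $\mathcal{L}:(q-1)$ as $\m$ varies in $L'$, which is the brief surjectivity check just performed.
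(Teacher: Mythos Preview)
The paper does not give its own proof of this theorem; it is quoted verbatim from \cite{EsmaTJM22} and used as a black box. So there is no ``paper's proof'' to compare against.

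Your argument is correct and self-contained. The only points worth flagging are implicit uses of facts the paper records elsewhere: that $Y_Q$ is genuinely a subgroup of $T_X(\F_q)$ (so Lemma~\ref{l:latticeOFsubgroup} applies), and that a pure binomial $\x^{\m^+}-\x^{\m^-}$ lies in a lattice ideal $I_{L'}$ precisely when $\m\in L'$ (used when you identify $L'$ with $\{\m\in L_\bb: Q\m\in(q-1)\Z^s\}$; this is the content of \cite[Lemma~3.2]{HLRV2014}, invoked in the proof of Lemma~\ref{l:latticeComparison}). With those in hand, your explicit description of $L'$ and the two lattice inclusions are exactly right, and the passage between $I_L=I(Y_Q)$ and $L=L'$ via Lemma~\ref{l:latticeComparison} is clean.
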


If $L_Q \subseteq L_\beta$, then $I_{L_Q}$ is an $L_{\beta}$-homogeneous toric ideal whose zero locus inside the torus is denoted $V_Q:=V_{X(\F_q)}(I_{L_Q})\cap T_X$. The following consequence of Theorem \ref{t:Baranhold} ensures that $V_Q$ coincides with the subgroup $Y_Q$ if the condition $\mathcal{L}=\mathcal{L}:(q-1)$ holds, following \cite[Proposition 4.3]{AlgebraicMethods2011} and \cite[Corollary 4.4]{AlgebraicMethods2011}. 

\begin{tm}[\cite{EsmaTJM22}] \label{t:BSnullstellensatz} If $L_Q \subseteq L_\beta$ and ${L}=L_Q +(q-1)L_\beta$, then we have the following
\begin{enumerate}
\item $V_Q=V_{X(\F_q)}(I_{L})$,
\item $V_Q=Y_Q$, if the condition $\mathcal{L}=\mathcal{L}:(q-1)$ holds,
\item $I(V_{X(\F_q)}(I_{L}))=I_{L}$, if the condition $\mathcal{L}=\mathcal{L}:(q-1)$ holds.
\end{enumerate} 
\end{tm}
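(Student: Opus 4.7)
The plan is to prove the three parts in sequence, each bootstrapping from the previous one together with results already established in the paper.

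For part~(1), I would begin by applying Lemma~\ref{l:latticeComparison}: since both $L_Q\subseteq L$ and $(q-1)L_\beta\subseteq L$, we obtain the containments $I_{L_Q}\subseteq I_L$ and $I_{(q-1)L_\beta}\subseteq I_L$. Combined with the identity $T_X(\F_q)=V_{X(\F_q)}(I_{(q-1)L_\beta})$ already used in Lemma~\ref{l:latticeOFsubgroup}, this gives $V_{X(\F_q)}(I_L)\subseteq V_{X(\F_q)}(I_{L_Q})\cap T_X(\F_q)=V_Q$. For the reverse inclusion, take $[P]\in V_Q$; since $[P]\in T_X$, the assignment $\m\mapsto \x^{\m}(P)$ is a well-defined character on $L_\beta$. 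Writing any $\m\in L$ as $\m=\m_1+(q-1)\m_2$ with $\m_1\in L_Q$ and $\m_2\in L_\beta$, one has
\[
\x^{\m}(P)=\x^{\m_1}(P)\cdot\bigl(\x^{\m_2}(P)\bigr)^{q-1}=1\cdot 1=1,
\]
using $[P]\in V_{X(\F_q)}(I_{L_Q})$ for the first factor and Fermat's little theorem in $\F_q^*$ for the second. Hence $[P]\in V_{X(\F_q)}(I_L)$.

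For part~(2), under the condition $\mathcal{L}=\mathcal{L}:(q-1)$, Theorem~\ref{t:Baranhold} gives $I(Y_Q)=I_{L'}$ where $L'=(L_Q\cap L_\beta)+(q-1)L_\beta$. The standing hypothesis $L_Q\subseteq L_\beta$ forces $L_Q\cap L_\beta=L_Q$, so $L'=L$. Because $Y_Q$ is itself a subgroup of $T_X(\F_q)$, the identification $Y_Q=V_{X(\F_q)}(I(Y_Q))$ (of the kind invoked in Lemma~\ref{l:latticeOFsubgroup}) together with part~(1) yields $Y_Q=V_{X(\F_q)}(I_L)=V_Q$.

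For part~(3), once (1) is in hand, it suffices to verify the hypothesis of Theorem~\ref{t:nullstellensatz}, namely $(q-1)L_\beta\subseteq L\subseteq L_\beta$. The first inclusion is built into $L=L_Q+(q-1)L_\beta$, and the second follows from $L_Q\subseteq L_\beta$. Moreover $\rank L=n$ is automatic since $(q-1)L_\beta$ already has full rank $n$. Theorem~\ref{t:nullstellensatz} then delivers $I(V_{X(\F_q)}(I_L))=I_L$.

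The only delicate step is the reverse inclusion in part~(1): one must use that $[P]$ lies in the torus in order to pass from the binomial condition $\x^{\m^+}(P)=\x^{\m^-}(P)$ to the multiplicative condition $\x^{\m}(P)=1$, which is precisely what makes the decomposition $L_Q+(q-1)L_\beta$ translate into the correct vanishing condition. As a minor byproduct, part~(3) does not actually seem to require the extra hypothesis $\mathcal{L}=\mathcal{L}:(q-1)$: it follows from Theorem~\ref{t:nullstellensatz} as soon as $L_Q\subseteq L_\beta$.
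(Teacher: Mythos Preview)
The paper does not prove this theorem; it is quoted from \cite{EsmaTJM22} and stated without argument. Your proposal is a correct self-contained proof using only tools already available at that point in the text (Lemma~\ref{l:latticeComparison}, Theorem~\ref{t:Baranhold}, Theorem~\ref{t:nullstellensatz}, and the identification of $T_X(\F_q)$ with $V_{X(\F_q)}(I_{(q-1)L_\beta})$ invoked in Lemma~\ref{l:latticeOFsubgroup}).

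Two remarks. First, your argument for part~(1) is in spirit the same idea the paper uses later in the proof of Corollary~\ref{c:I_L'}: adjoining $(q-1)L_\beta$ to a sublattice of $L_\beta$ corresponds, at the level of $\F_q$-rational zero sets, to intersecting with $T_X(\F_q)$. Your version, which works with the character $\m\mapsto\x^{\m}(P)$ on the torus and Fermat's little theorem rather than asserting an ideal identity $I_{L}=I_{L_Q}+I_{(q-1)L_\beta}$, is the cleaner route, since sums of lattice ideals are in general only equal to the lattice ideal of the sum after saturation by the product of the variables. Second, your closing observation is correct and worth keeping: part~(3) does not require the hypothesis $\mathcal{L}=\mathcal{L}:(q-1)$, because Theorem~\ref{t:nullstellensatz} applies as soon as $(q-1)L_\beta\subseteq L\subseteq L_\beta$, and both inclusions hold by construction of $L$.
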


Using Theorem \ref{t:nullstellensatz}, we are now able to prove  that $V_Q$ coincides with the subgroup $Y_Q$ if and only if the condition $\mathcal{L}=\mathcal{L}:(q-1)$ holds. 

\begin{coro} If $L=(L_Q\cap L_{\bb})+(q-1)L_{\bb}$, then  $Y_Q=V_{X(\F_q)}(I_L)$ if and only if $\mathcal{L}:(q-1)=\cl L$.
\end{coro}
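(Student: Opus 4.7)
The plan is to deduce both directions by combining Theorem~\ref{t:Baranhold} with the finite Nullstellensatz Theorem~\ref{t:nullstellensatz}; neither direction should require a genuinely new calculation. First I would verify that the hypotheses of Theorem~\ref{t:nullstellensatz} apply to $L=(L_Q\cap L_{\bb})+(q-1)L_{\bb}$: the inclusions $(q-1)L_{\bb}\subseteq L\subseteq L_{\bb}$ are immediate from the definition of $L$, and since $(q-1)L_{\bb}$ already has rank $n$, so does $L$. This gives the key identity $I(V_{X(\F_q)}(I_L))=I_L$.

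For the direction $(\Leftarrow)$, I would assume $\mathcal{L}:(q-1)=\cl L$. Theorem~\ref{t:Baranhold} then upgrades the containment $I_L\subseteq I(Y_Q)$ to the equality $I(Y_Q)=I_L$. Since $Y_Q$ is a subgroup of $T_X(\F_q)$, we have $Y_Q=V_{X(\F_q)}(I(Y_Q))$ by \cite[Lemma 2.8]{sahin18}, and substituting yields $Y_Q=V_{X(\F_q)}(I_L)$.

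For the direction $(\Rightarrow)$, I would run the chain in reverse. Assume $Y_Q=V_{X(\F_q)}(I_L)$. Taking vanishing ideals on both sides and invoking the identity from the first step gives $I(Y_Q)=I(V_{X(\F_q)}(I_L))=I_L$. The equality clause of Theorem~\ref{t:Baranhold} then forces $\mathcal{L}=\mathcal{L}:(q-1)$, as required.

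The only real obstacle is formal bookkeeping: one must confirm that the lattice $L$ in the statement matches verbatim the one appearing in Theorem~\ref{t:Baranhold}, and that the rank and inclusion hypotheses needed to invoke the finite Nullstellensatz hold automatically, both of which are one-line observations. No new ideas are required; this corollary is really a short logical pairing of Theorems~\ref{t:Baranhold} and~\ref{t:nullstellensatz}, with the novelty being that Theorem~\ref{t:nullstellensatz} now allows the converse of the implication already established in Theorem~\ref{t:BSnullstellensatz}(2).
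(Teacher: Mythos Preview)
Your proposal is correct and follows essentially the same route as the paper: both directions are obtained by pairing Theorem~\ref{t:Baranhold} with Theorem~\ref{t:nullstellensatz}, using $Y_Q=V_{X(\F_q)}(I(Y_Q))$ for the forward step and $I(V_{X(\F_q)}(I_L))=I_L$ for the converse. Your write-up is slightly more explicit about verifying the hypothesis $(q-1)L_{\bb}\subseteq L\subseteq L_{\bb}$ and about citing \cite[Lemma~2.8]{sahin18}, but the argument is the same.
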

\begin{proof} If $\mathcal{L}:(q-1)=\cl L$ then by Theorem \ref{t:Baranhold} we have $I(Y_Q)=I_L$ implying that $Y_Q=V_{X(\F_q)}(I(Y_Q))=V_{X(\F_q)}(I_L)$. For the converse it suffices to note that the lattice $L=(L_Q\cap L_{\bb})+(q-1)L_{\bb}$ satisfies the condition $(q-1)L_{\beta}\subseteq  L\subseteq L_{\beta}$ of Theorem \ref{t:nullstellensatz}, and hence $I(Y_Q)=I(V_{X(\F_q)}(I_L))=I_L$. Applying Theorem \ref{t:Baranhold} again, we get $\mathcal{L}:(q-1)=\cl L$.
\end{proof}

We close this section by sharing a procedure for computing the vanishing ideal of $V_{X(\F_q)}(I_L)\cap T_X$ for lattices $L\subseteq L_{\bb}$ not satisfying the condition $(q-1)L_{\bb}\subset L$.

\begin{coro}\label{c:I_L'} If $L\subseteq L_{\bb}$, then the vanishing ideal of $V_{X(\F_q)}(I_L)\cap T_X$ is $I_{L'}$, for the lattice $L'=L+(q-1)L_{\bb}$. Moreover, the order of $V_{X(\F_q)}(I_L)\cap T_X$ is $|L_{\bb}/L'|$.
\end{coro}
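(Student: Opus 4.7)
The plan is to reduce to the case already handled by Theorem \ref{t:nullstellensatz} and Theorem \ref{t:rationalpoints} by identifying $V_{X(\F_q)}(I_L)\cap T_X$ with $V_{X(\F_q)}(I_{L'})$, where $L'=L+(q-1)L_{\bb}$. The key observation is that $L'$ satisfies $(q-1)L_{\bb}\subseteq L'\subseteq L_{\bb}$, and its rank is $n$ (because it contains the rank-$n$ sublattice $(q-1)L_{\bb}$), so $L'$ meets the hypotheses of both earlier theorems.

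I would first verify the inclusion $V_{X(\F_q)}(I_{L'})\subseteq V_{X(\F_q)}(I_L)\cap T_X$. Since $L\subseteq L'$, Lemma \ref{l:latticeComparison} gives $I_L\subseteq I_{L'}$, hence $V_{X(\F_q)}(I_{L'})\subseteq V_{X(\F_q)}(I_L)$; and Proposition \ref{p:VX(IL)lyingInTX} forces $V_{X(\F_q)}(I_{L'})\subseteq T_X$.

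The reverse inclusion is the main content. Let $[P]\in V_{X(\F_q)}(I_L)\cap T_X$. Because $[P]$ lies in the torus and is $\F_q$-rational, I would invoke the representative discussion from Section \ref{s:prelim} to pick $P=(p_1,\dots,p_r)\in(\F_q^*)^r$; this is the step where finiteness of $\F_q$ is actually used. For $\m\in L$, the vanishing condition and nonvanishing of $\x^{\m^-}(P)$ give $\x^{\m}(P)=1$, while for $\m=(q-1)\m'\in(q-1)L_{\bb}$ Fermat's little theorem applied coordinatewise gives $\x^{\m}(P)=\x^{\m'}(P)^{q-1}=1$. Summing these two cases shows $\x^{\m}(P)=1$ for every $\m\in L'=L+(q-1)L_{\bb}$, that is, $[P]\in V_{X(\F_q)}(I_{L'})$.

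Once the equality $V_{X(\F_q)}(I_L)\cap T_X=V_{X(\F_q)}(I_{L'})$ is established, both conclusions follow immediately: the vanishing ideal equals $I(V_{X(\F_q)}(I_{L'}))=I_{L'}$ by Theorem \ref{t:nullstellensatz} (applicable precisely because $(q-1)L_{\bb}\subseteq L'\subseteq L_{\bb}$), and the cardinality equals $|L_{\bb}/L'|$ by Theorem \ref{t:rationalpoints}. The only subtle point is the representative choice in the torus, which is why the argument cannot be run before restricting to $T_X$; this is the place where one must be careful that $\x^{\m}(P)$ is well-defined for $\m\in L_{\bb}$ (it is, by $G$-invariance) so that the Fermat step on $(q-1)L_{\bb}$ makes sense independently of the lift of $[P]$.
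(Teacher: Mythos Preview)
Your argument is correct and follows the same overall strategy as the paper: establish the equality $V_{X(\F_q)}(I_L)\cap T_X = V_{X(\F_q)}(I_{L'})$ for $L'=L+(q-1)L_{\bb}$, and then invoke Theorems~\ref{t:nullstellensatz} and~\ref{t:rationalpoints} (which apply because $(q-1)L_{\bb}\subseteq L'\subseteq L_{\bb}$ and $\rank L'=n$). The only difference lies in how that set equality is obtained. The paper works at the ideal level: it uses $I_{L'}=I_L+I_{(q-1)L_{\bb}}$ together with the cited identity $I(T_X(\F_q))=I_{(q-1)L_{\bb}}$ to conclude $V_{X(\F_q)}(I_{L'})=V_{X(\F_q)}(I_L)\cap V_{X(\F_q)}(I_{(q-1)L_{\bb}})=V_{X(\F_q)}(I_L)\cap T_X$ in one line. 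You instead verify both inclusions pointwise, choosing a representative $P\in(\F_q^*)^r$ and applying Fermat's little theorem to the $(q-1)L_{\bb}$ summand. Your route is more elementary and entirely self-contained within the paper (no external citation, no appeal to the sum formula for lattice ideals), whereas the paper's version is shorter and more conceptual once those two ingredients are granted.
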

\begin{proof} If $L'=L+(q-1)L_{\bb}$, then $I_{L'}=I_L+I_{(q-1)L_\bb}$. Since $I(T_X(\F_q))=I_{(q-1)L_\bb}$ it follows that we have
$$V_{X(\F_q)}(I_{L'})=V_{X(\F_q)}(I_L)\cap V_{X(\F_q)}(I_{(q-1)L_\bb})=V_{X(\F_q)}(I_L)\cap T_X.$$ 
As $(q-1)L_{\bb}\subset L' \subset L_{\bb}$, Theorem \ref{t:nullstellensatz} implies that $I_{L'}=I(V_{X(\F_q)}(I_{L'}))$. Hence, the $B$-saturated ideal corresponding to $Y=V_{X(\F_q)}(I_L)\cap T_X$ is $I(Y)=I_{L'}$.
The second claim follows directly from Theorem \ref{t:rationalpoints}.
\end{proof}

\section{Degree of a lattice ideal}
In this section, we assume that the $n$-dimensional toric variety $X$ is smooth and projective. Let $S$ be the Cox ring of $X$ and $M$ an $S$-graded module. Denote by $H_M(\aa)$ the multigraded Hilbert function of $M$, that is, $H_M(\aa)=\dim_{\K}(M_{\aa})$. Maclagan and Smith proved in \cite[Proposition 2.10]{MS05-MultHPolynomial} that there is a polynomial $P_M(t_1,\dots,t_d)\in \Q[t_1,\dots,t_d]$ called the \textit{multigraded Hilbert polynomial} of $M$ such that $P_M(\aa)=H_M(\aa)$ for a non-empty subset of $\N\bb$. They also showed that for a multigraded ideal $I$, the Hilbert polynomials of $S/I$ and that of $S/(I:B^{\infty})$ are the same, where $(I:B^{\infty})$ is the $B$-saturation of $I$, see \cite[Lemma 2.13]{MS05-MultHPolynomial}. Therefore, if $I$ is the $B$-saturated ideal corresponding to a finite set of points then its Hilbert polynomial is constant and equal to the cardinality of $V_X(I)$, see \cite[Example 4.12]{MS05-MultHPolynomial}. Inspired from these, we make the following
\begin{defi}
Let $I$ be the $B$-saturated homogeneous ideal corresponding to a finite set of points. The degree of $I$ is the constant Hilbert polynomial $P_{S/I}$.
\end{defi}

\begin{lm} \label{l:HPofI_LoverK} If $I_L$ is an $L_{\bb}-$homogeneous ideal, then its (multigraded) Hilbert function defined by $H_{I_L}(\aa):=\dim_{\K}(S_{\aa}/I_L)$ is independent of the field $\K$. 
\end{lm}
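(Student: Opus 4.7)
The plan is to exhibit an explicit $\K$-basis of $(S/I_L)_{\aa}$ whose cardinality is a purely combinatorial invariant of the data $L$, $\bb$, $\aa$, so that $H_{I_L}(\aa)$ cannot depend on $\K$. Let $E_{\aa}=\{\a\in\N^r : \bb\a=\aa\}$ be the exponent set of the monomials in degree $\aa$ and declare $\a\sim\b$ iff $\a-\b\in L$. Because $L\subseteq L_{\bb}$, equivalent exponents share the same $\bb$-degree, so $\sim$ restricts to a well-defined equivalence relation on $E_{\aa}$. This yields a direct-sum decomposition $S_{\aa}=\bigoplus_{C} V_C$ indexed by the $\sim$-classes $C\subseteq E_{\aa}$, where $V_C=\spn_{\K}\{\x^{\a}:\a\in C\}$.

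The target is to show that $I_L\cap S_{\aa}=\bigoplus_C W_C$, where $W_C=\{\sum_{\a\in C}c_{\a}\x^{\a}:\sum_{\a}c_{\a}=0\}$ is the codimension-one hyperplane of zero-sum combinations in $V_C$. The containment $W_C\subseteq I_L$ will follow from the identity
\[
\x^{\a}-\x^{\b}=\x^{\min(\a,\b)}\bigl(\x^{\m^+}-\x^{\m^-}\bigr),\qquad \m=\a-\b\in L,
\]
which expresses every difference of monomials within one class as a multiple of a generator of $I_L$. For the reverse inclusion, I would use the standard fact that every element of $I_L$ admits a $\K$-linear expansion as a sum of binomials $\x^{\a}-\x^{\b}$ with $\a-\b\in L$, obtained by multiplying the binomial generators by monomials. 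Restricting such an expansion to the homogeneous component of degree $\aa$ forces both $\a$ and $\b$ to lie in $E_{\aa}$ and hence in a common class, placing each summand in some $W_C$.

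Putting the two pieces together gives
\[
\dim_{\K}(S/I_L)_{\aa}=\sum_{C}|C|-\sum_{C}(|C|-1)=\#\bigl(E_{\aa}/\!\sim\bigr),
\]
and the right-hand side is manifestly independent of $\K$. The only subtlety in the argument is the verification of the inclusion $I_L\cap S_{\aa}\subseteq \bigoplus_C W_C$; this rests on the standard description of $I_L$ as the $\K$-span of binomials $\x^{\a}-\x^{\b}$ with $\a-\b\in L$, which, although routine, is the linchpin that keeps $\K$ out of the final count.
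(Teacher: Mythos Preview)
Your proof is correct and reaches exactly the same endpoint as the paper: both show that $H_{I_L}(\aa)$ equals the number of $\sim_L$-equivalence classes in $E_{\aa}=\{\a\in\N^r:\bb\a=\aa\}$, which is field-independent. The paper, however, obtains this by citing \cite[Theorem 7.3]{CombComAlgBook}, which identifies $S/I_L$ with the semigroup ring of $\N^r/\!\sim_L$; from that black box the monomial basis indexed by equivalence classes drops out immediately, and homogeneity under $L_{\bb}$ then pins each class to a single degree.

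Your route unpacks precisely what that citation encodes: the decomposition $S_{\aa}=\bigoplus_C V_C$ and the identification $(I_L)_{\aa}=\bigoplus_C W_C$ via the binomial span of $I_L$. What you gain is self-containment---no appeal to an external structure theorem---at the modest cost of spelling out the standard fact that $I_L$ is $\K$-spanned by binomials $\x^{\a}-\x^{\b}$ with $\a-\b\in L$. The paper's version is shorter but less transparent; yours makes the combinatorics explicit.
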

\begin{proof} Recall that the vector space $S_{\aa}$ is spanned by the monomials $$\x^{\a}=x_1^{a_1}\cdots x_r^{a_r} \mbox{ with } \deg(\x^{\a})=\sum_{j=1}^{r}a_j\bb_j=\aa.$$ By \cite[Theorem 7.3]{CombComAlgBook}, the quotient ring $S/I_L$ is isomorphic to the semigroup ring of the semigroup $\N^r/\sim_L$, where $\a_1 \sim_L \a_2 \iff \a_1-\a_2 \in L$. Since $I_L$ is $L_{\bb}-$homogeneous, we have $L\subseteq L_{\bb}$. Therefore, $\a_1-\a_2 \in L$ implies $\a_1-\a_2 \in L_{\bb}$, i.e. if $\x^{\a_1}+I_L=\x^{\a_2}+I_L$ in $S/I_L$, then $\deg(\x^{\a_1})=\deg(\x^{\a_2})$ meaning that $\x^{\a_1}+I_L=\x^{\a_2}+I_L$ in $S_{\aa}/I_L$. Hence, $H_{I_L}(\aa)=\dim_{\K}(S_{\aa}/I_L)$ is the number of equivalence classes $\x^{\a}+I_L$ of monomials $\x^{\a}$ of degree $\aa\in \N_{\bb}$, which is also the number of equivalence classes of vectors $\a \in \N^r$ with respect to the equivalence relation $\sim_L$ such that $\sum_{j=1}^{r}a_j\bb_j=\aa$. This number is independent of the field, completing the proof. 
\end{proof}

\begin{tm} \label{t:DegreeOfI_L}
 If $I_L$ is an $L_{\bb}-$homogeneous ideal of dimension $d=r-n$, then its degree is $\deg(I_L)=|L_{\bb}/L|=d_1\cdots d_n$, where $d_i$'s are the invariant factors of $L_{\bb}/L$.
\end{tm}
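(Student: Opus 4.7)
The plan is to compute $P_{S/I_L}$ by reducing to a finite field on which Theorem \ref{t:nullstellensatz} applies, so that $I_L$ becomes the vanishing ideal of an explicit finite set of points whose cardinality has already been identified as $d_1\cdots d_n$; then to transfer the result back to arbitrary $\K$ using field-independence of the Hilbert polynomial.

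First I would observe that by Lemma \ref{l:HPofI_LoverK} the multigraded Hilbert function $H_{S/I_L}(\aa)=\dim_{\K}(S_{\aa}/(I_L)_{\aa})$ is independent of the base field $\K$. Since $P_{S/I_L}$ agrees with $H_{S/I_L}$ on a non-empty subset of $\N\bb$ by \cite[Proposition 2.10]{MS05-MultHPolynomial}, the multigraded Hilbert polynomial itself is field-independent, so it suffices to compute $\deg(I_L)=P_{S/I_L}$ over one convenient field.

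Next, set $\ell=|L_{\bb}/L|$ and pick any prime $p$ not dividing $\ell$. Proposition \ref{p:existenceOfFq} then produces a finite field $\F_q$ of characteristic $p$ satisfying $(q-1)L_{\bb}\subseteq L\subseteq L_{\bb}$. Working over $\K=\F_q$, Theorem \ref{t:nullstellensatz} yields
\[
I_L=I(V_{X(\F_q)}(I_L)),
\]
so $I_L$ is the vanishing ideal of a finite set of points of $X$ and is in particular $B$-saturated. Consequently, $P_{S/I_L}$ is the constant $|V_{X(\F_q)}(I_L)|$, and by Theorem \ref{t:rationalpoints} this constant equals $|L_{\bb}/L|=d_1\cdots d_n$. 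Combined with the preceding paragraph, this gives $\deg(I_L)=d_1\cdots d_n$ over an arbitrary $\K$.

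The main subtlety is justifying that the defining constant $P_{S/I_L}$ actually computes the number of points of a $B$-saturated zero-dimensional subscheme of $X$. Over an arbitrary field one does not know a priori that $I_L$ is $B$-saturated, but this is exactly what Theorem \ref{t:nullstellensatz} supplies once we pass to a suitable $\F_q$. As a fallback, one could avoid saturation altogether by invoking \cite[Lemma 2.13]{MS05-MultHPolynomial} to replace $I_L$ by $(I_L:B^\infty)$ without changing $P_{S/I_L}$, and then run the same counting argument over $\F_q$.
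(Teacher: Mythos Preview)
Your proposal is correct and follows essentially the same route as the paper: field-independence of the Hilbert function via Lemma~\ref{l:HPofI_LoverK}, passage to a suitable $\F_q$ via Proposition~\ref{p:existenceOfFq}, identification $I_L=I(V_{X(\F_q)}(I_L))$ via Theorem~\ref{t:nullstellensatz}, and the point count via Theorem~\ref{t:rationalpoints}. Your added remarks on $B$-saturation and the fallback through \cite[Lemma 2.13]{MS05-MultHPolynomial} are not in the paper's proof but only make the argument more explicit.
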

\begin{proof} By Lemma \ref{l:HPofI_LoverK}, the multigraded Hilbert polynomial of $I_L$ is independent of the field. Under the hypothesis $\rank L=n$ and $L\subseteq L_{\bb}$, so by Proposition \ref{p:existenceOfFq} there is a finite field $\F_q$ such that $(q-1)L_{\beta}\subseteq  L\subseteq L_{\beta}$. Thus, the number of $\F_q$-rational points is $|V_X(I_L)(\F_q)|=d_1\cdots d_n$ by Theorem \ref{t:rationalpoints}. On the other hand, the $B$-saturated ideal corresponding to $V_X(I_L)$ is $I(V_X(I_L))=I_L$ by Theorem \ref{t:nullstellensatz}. Thus, the Hilbert polynomial of $I_L$ will be $d_1\cdots d_n$, which is the degree of $I_L$.
\end{proof}

The following reveals that Theorem \ref{t:DegreeOfI_L} generalizes the main result of the nice paper \cite{HLRV2014} by Lopez and Villarreal. $X$ is the projective space $\mathbb{P}^n$ in this case. The matrix $\bb=[1 \cdots 1]$, $L_{\bb}=\Z\{e_1-e_r,\dots,e_{r-1}-e_r\}$ and hence an ideal is $L_{\bb}-$ homogeneous with respect to the standard grading: $\deg (x_i)=1$, for each $i\in [r]$, where $r=n+1$.

\begin{coro} \cite[Theorem 3.13]{HLRV2014}  If $I_L$ is an $L_{\bb}-$ homogeneous ideal of dimension $1$, then its degree is $\deg(I_L)=|T(\Z^r /L)|$, where $T(\Z^r /L)$ is the torsion part of the group $\Z^r /L$.
\end{coro}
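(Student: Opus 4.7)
The plan is to derive this from Theorem \ref{t:DegreeOfI_L}, whose hypothesis is that $I_L$ is $L_{\bb}$-homogeneous of dimension $d = r-n$, with $L \subseteq L_{\bb}$ and $\rank L = n$. Here the projective setting gives $r = n+1$ and $d = 1$, so the dimension hypothesis $\dim I_L = 1 = r - n$ forces $\rank L = n = r-1$, matching $\rank L_{\bb} = r-1$. Hence Theorem \ref{t:DegreeOfI_L} applies and yields
\[
\deg(I_L) = |L_{\bb}/L|.
\]
The remaining task is purely a statement about finitely generated abelian groups: identify $L_{\bb}/L$ with $T(\Z^r/L)$.

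For this, I would use the short exact sequence obtained from the inclusions $L \subseteq L_{\bb} \subseteq \Z^r$, namely
\[
0 \longrightarrow L_{\bb}/L \longrightarrow \Z^r/L \longrightarrow \Z^r/L_{\bb} \longrightarrow 0.
\]
In the projective case, the degree map $\Z^r \to \Z$, $\a \mapsto a_1+\cdots+a_r$, has kernel exactly $L_{\bb}=\Z\{e_1-e_r,\dots,e_{r-1}-e_r\}$, so $\Z^r/L_{\bb} \cong \Z$. Because $\Z$ is free, the sequence splits, giving
\[
\Z^r/L \;\cong\; \Z \;\oplus\; L_{\bb}/L.
\]
Since $\rank L = n = r-1$, the quotient $L_{\bb}/L$ is finite and is therefore precisely the torsion subgroup of $\Z^r/L$. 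Thus $T(\Z^r/L) \cong L_{\bb}/L$, and combining with the display above,
\[
\deg(I_L) \;=\; |L_{\bb}/L| \;=\; |T(\Z^r/L)|,
\]
as required.

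There is no real obstacle here — the entire content sits in Theorem \ref{t:DegreeOfI_L}; the corollary amounts to recognizing that in the projective grading the finite quotient $L_{\bb}/L$ coincides with the torsion of $\Z^r/L$. The only thing to be careful about is translating the dimension hypothesis into the rank condition needed to invoke Theorem \ref{t:DegreeOfI_L}, which is immediate from $\dim I_L = r - \h(I_L) = r - \rank L$.
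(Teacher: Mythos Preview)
Your proof is correct and follows essentially the same approach as the paper: both reduce immediately to Theorem \ref{t:DegreeOfI_L} and then identify $|L_{\bb}/L|$ with $|T(\Z^r/L)|$ via a standard abelian-group argument. The only cosmetic difference is that the paper makes this identification by noting that both quantities equal the product $d_1\cdots d_n$ of invariant factors (via the structure theorem), whereas you use the split short exact sequence $0\to L_{\bb}/L\to \Z^r/L\to \Z^r/L_{\bb}\cong\Z\to 0$; these are equivalent elementary observations.
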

\begin{proof} It is well known from the fundamental structure theorem for finitely generated abelian groups that the order of the torsion part $T(\Z^r /L)$ of the group $\Z^r /L$ is $d_1\cdots d_n$, where $d_i$'s are the invariant factors of the matrix whose columns span the lattice $L$, see \cite[pp. 187–-188]{Jacobson}. So, Theorem \ref{t:DegreeOfI_L} completes the proof. 
\end{proof}

If an ideal $I=\la F_1,\dots,F_n \ra$ is a complete intersection generated in semi-ample degrees $\aa_1,\dots,\aa_n$, then its Hilbert Polynomial is the Bernstein–-Kushnirenko bound, which is the mixed volume $n!V(P_{\aa_1},\dots,P_{\aa_n})$ of the Newton polytopes $P_{\aa_1},\dots,P_{\aa_n}$ by the proof of \cite[Theorem 3.16]{MultHFuncToricCodes16}. The following reveals the same is true in the special case that $I=I_L$ is a lattice ideal, without the assumption that the degrees $\aa_1,\dots,\aa_n$ are semi-ample.

\begin{coro} \label{c:mixedvolume}Assume that $\{d_1\m_1,\dots,d_n\m_n\}$ is a basis of the lattice $L$ such that $\{\m_1,\dots,\m_n\}$ is a basis of $L_{\bb}$. If $P_i$ is the Newton polytope of the binomial $F_i=\x^{d_i\m_i^{+}}-\x^{d_i\m_i^{-}}$, for $i\in [n]$, then the degree of $I_L$ is the mixed volume $n!V(P_{1},\dots,P_{n})$.
\end{coro}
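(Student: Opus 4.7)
The plan is to leverage Theorem~\ref{t:DegreeOfI_L}, which already gives $\deg(I_L)=d_1\cdots d_n$, and reduce the corollary to the purely polytopal identity $n!\,V(P_1,\ldots,P_n)=d_1\cdots d_n$. This cleanly separates the algebraic content (already established) from the combinatorial content (what remains).

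First I would describe $P_i$ explicitly. Since $F_i=\x^{d_i\m_i^+}-\x^{d_i\m_i^-}$, the Newton polytope $P_i$ is the line segment between the lattice points $d_i\m_i^+$ and $d_i\m_i^-$, which is a translate of the segment $[0,d_i\m_i]$ (recall $\m_i=\m_i^+-\m_i^-$). Translation invariance of mixed volume lets me replace each $P_i$ by $[0,d_i\m_i]$ without affecting $V$. Then by multilinearity of mixed volume in each slot I factor out the scalars to get
\[
n!\,V(P_1,\ldots,P_n)=d_1\cdots d_n\cdot n!\,V\bigl([0,\m_1],\ldots,[0,\m_n]\bigr),
\]
so it suffices to prove that $n!\,V([0,\m_1],\ldots,[0,\m_n])=1$.

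The last step is the key geometric input. The segments all lie in the $n$-dimensional real vector space $L_{\bb}\otimes\R$, so the mixed volume is naturally computed with respect to the lattice $L_{\bb}$; equivalently, via the isomorphism $\phi\colon\Z^n\to L_{\bb}$ from (\ref{eq:SES}), I transport everything to $\R^n$. Writing $\a_i=\phi^{-1}(\m_i)\in\Z^n$, the hypothesis that $\{\m_1,\ldots,\m_n\}$ is a $\Z$-basis of $L_{\bb}$ becomes the statement that $\{\a_1,\ldots,\a_n\}$ is a $\Z$-basis of $\Z^n$. Applying the standard formula for the mixed volume of line segments through the origin gives
\[
n!\,V\bigl([0,\a_1],\ldots,[0,\a_n]\bigr)=\bigl|\det[\a_1\mid\cdots\mid\a_n]\bigr|=1,
\]
and combining with the previous display yields $n!\,V(P_1,\ldots,P_n)=d_1\cdots d_n=\deg(I_L)$.

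The main obstacle I expect is pinning down the normalization of mixed volume: the polytopes $P_i$ sit in $\R^r$, yet they are concentrated in a translate of the rank-$n$ sublattice $L_{\bb}$, and the corollary's mixed volume must be interpreted with respect to this intrinsic $n$-dimensional lattice structure (equivalently via the basis $\{\m_1,\ldots,\m_n\}$). A short remark identifying this normalization with the one used in the Bernstein--Kushnirenko statement invoked from \cite[Theorem~3.16]{MultHFuncToricCodes16} should suffice; once that is in place, the multilinearity-plus-determinant calculation above is routine.
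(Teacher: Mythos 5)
Your proof is correct, and it takes a genuinely different route from the paper's. You treat the identity $n!\,V(P_1,\ldots,P_n)=d_1\cdots d_n$ as a purely convex-geometric statement: translation invariance lets you reduce each $P_i$ to $[0,d_i\m_i]$, homogeneity of mixed volume in each slot extracts the factor $d_1\cdots d_n$, and the determinant formula for the mixed volume of line segments (with the correct lattice normalization on $L_{\bb}\cong\Z^n$, which you rightly flag) gives the remaining factor $1$ because $\{\m_1,\ldots,\m_n\}$ is a basis. Combining with Theorem~\ref{t:DegreeOfI_L} finishes.

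The paper instead goes through algebra: using Proposition~\ref{p:existenceOfFq} it chooses a finite field $\F_q$ containing all $d_i$-th roots of unity, passes to the Laurent ring $\K[\x^{\mp}]$, and invokes \cite[Theorem 2.1]{BinomialIdeals} to see that $\K[\x^{\mp}]\cdot I_L$ is a complete intersection cut out by $\x^{d_1\m_1}-1,\ldots,\x^{d_n\m_n}-1$. In the torus coordinates $t_i=\x^{\m_i}$ this is the system $t_1^{d_1}=\cdots=t_n^{d_n}=1$, which has exactly $d_1\cdots d_n$ solutions, so the solution count meets the Bernstein--Kushnirenko bound for these line-segment Newton polytopes by \cite{HuStu95}; then Theorem~\ref{t:DegreeOfI_L} closes the loop. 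In effect the paper also ends up using the segment-determinant computation of the mixed volume (via Huber--Sturmfels), but it wraps this in the local-complete-intersection observation and the fact that the zero set of $I_L$ in $T_X$ attains the BKK bound, which is interesting content in its own right. Your version isolates the combinatorics more cleanly and avoids any appeal to finite fields, root-of-unity existence, or the Eisenbud--Sturmfels theorem; the paper's version buys the additional geometric statement that the system is a regular sequence on the torus with the maximal BKK number of solutions.
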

\begin{proof} By Proposition \ref{p:existenceOfFq}, there is a finite field $\K=\F_q$ containing all the $d_i$-th roots of unity, for each $i \in [n]$. We know that $V_X(I_L)(\K)$ lies inside the torus $T_X(\K)$ by Proposition \ref{p:VX(IL)lyingInTX}. So, localizing to the torus does not change the order of $V_X(I_L)(\K)$. This amounts to passing to the Laurent polynomial ring $$\K[\x^{\mp}]=\K[x_1,\dots,x_r]_{x_1\cdots x_r}=\K[x_1,\dots,x_r,x_1^{-1},\dots,x_r^{-1}].$$ By \cite[Theorem 2.1]{BinomialIdeals}, the ideal $\K[\x^{\mp}]\cdot I_L$ is a local complete intersection generated by the Laurent polynomials $\x^{d_1\m_1}-1, \dots,\x^{d_n\m_n}-1$ forming a regular sequence, even if $I_L$ is not the (global) complete intersection ideal $$\la \x^{d_1\m_1^{+}}-\x^{d_1\m_1^{-}}, \dots, \x^{d_n\m_n^{+}}-\x^{d_n\m_n^{-}}\ra.$$ Letting $t_i:=\x^{\m_i}$ be the local variables for the torus $T_X(\K)\cong (\K^*)^n$, we get the system $t_1^{d_1}=\cdots=t_n^{d_n}=1$ having $d_1\cdots d_n$ solutions as $\K$ has all the $d_i$-th roots of unity, for each $i \in [n]$. Thus, $|V_X(I_L)(\K)|$ meets the Bernstein-–Kushnirenko bound $n!V(P_{\aa_1},\dots,P_{\aa_n})=d_1\cdots d_n$  by \cite{HuStu95} as $P_i$ are just line segments, see also \cite{Bernstein,Kushnirenko}. Hence,  Theorem \ref{t:DegreeOfI_L} completes the proof.
\end{proof}

\begin{ex} \label{ex:15ElementsDegreeIs5} We revisit Example \ref{ex:15elements}. Consider the fan in $\R^2$ with rays generated by $\vv_1=(1,0)$, $\vv_2=(0,1)$, $\vv_3=(-1,2)$,
and $\vv_4=(0,-1)$. Then the corresponding toric variety is the Hirzebruch surface $X=\cl H_{2}$. Recall that a $\Z$-basis of $L_{\bb}$ is given by $\uu_1=(1,0,-1,0)$ and $\uu_2=(0,1,2,-1)$. Consider the lattice $L$ spanned over $\Z$ by $3\uu_1$ and $5\uu_2$. Then, clearly $15L_{\bb}\subset L \subset L_{\bb}$. So, the degree of $I_L$ is just the order $|L_{\bb}/L|=15$. 

If $p$ is a prime number other than $3$ or $5$, then by Proposition \ref{p:existenceOfFq} there is a finite field $\F_q$ whose characteristic is $p$ for which $(q-1)L_{\bb}\subset L \subset L_{\bb}$ is satisfied and thus $I_{L}=I(V_{X(\F_q)}(I_L))$ is the $B$-saturated ideal of $V_{X(\F_q)}(I_L)$, by Theorem \ref{t:nullstellensatz} over the field $\F_{q}$, where $B=\langle x_1x_2,x_2x_3,x_3x_4,x_4x_1 \rangle$. This is the case for instance with $p=2$, and thus, $V_{X(\F_q)}(I_{L})$  has $15=\deg(I_{L})$ members of $T_X$ over $\F_{16}$. Similarly, $I_L$ is the $B$-saturated ideal of $V_{X(\F_q)}(I_L)$ over the finite field $\F_q$ of characteristic $p=11$, with $q=11^2=121$ elements, as $15$ divides $120$. $V_{X(\F_q)}(I_{L})$  has $15=\deg(I_{L})$ members of $T_X$ over $\F_{121}$.

However, if $q=p=11$ then $I_{L}$ is not the $B$-saturated ideal of $V_{X(\F_q)}(I_L)$ as $15$ does not divide $10$. Over the field $\F_{11}$, we have to replace $L$ by $L'=L+(q-1)L_{\bb}$ so as to obtain the correct lattice whose ideal $I_{L'}$ is the $B$-saturated vanishing ideal of $V_{X(\F_q)}(I_L)$, by Corollary \ref{c:I_L'}. A basis for the lattice $L'$ is given by $\uu_1$ and $5\uu_2$, since $\uu_1=-3(3\uu_1)+10\uu_1 \in L'$. The Smith-normal form of the matrix with columns $\uu_1$ and $5\uu_2$ is $[\e_1 | 5 \e_2]$, where $\e_i$ is a standard basis vector of $\Z^4$. Thus, $V_{X(\F_q)}(I_{L})$  has $5=\deg(I_{L'})$ members of $T_X$ over the finite field $\F_{11}$.
\end{ex}

\section{Parameterization of the subgroup cut out by a lattice ideal}
In this section, we describe the points of the subgroup $V_{X(\K)}(I_L)$ of the torus $T_X$ defined by the lattice ideal $I_L$.

Let $\cl B$ be an $r\times n$ matrix whose columns constitute a basis for $L\subseteq L_{\bb}$; let $\cl A$ and $\cl C$ be unimodular matrices of sizes $r$ and $n$, respectively, so that $$\cl D=\cl A \cl B \cl C=[d_1\e_1 |\cdots |d_n\e_n]$$ is the Smith-Normal form of $\cl B$, where $\e_1,\dots,\e_n \in \Z^r$. Recall that $d_i$ divides $d_{i+1}$, for all $i\in [n-1]$. If $\cl A^{-1}=[\m_1 |\cdots |\m_r]$ is the inverse of $\cl A$, then its columns span $\Z^r$. Thus, the columns $\{d_1\m_1,\dots,d_n\m_n\}$ of ${\cl A}^{-1}\cl D=\cl B \cl C$ provide us with another basis of the lattice $L$ such that $\{\m_1,\dots,\m_n\}$ is a basis of $L_{\bb}$. Since $\rank L=n$, we have $V_{X(\K)}(I_L)\subseteq T_X$ by Proposition \ref{p:VX(IL)lyingInTX}.

\begin{tm} \label{t:parameterization} Let $L\subseteq L_{\bb}$ be a lattice of rank $n$. Then, the following hold:
\begin{itemize}
\item[(i)] If $[P_i]=[\eta_i^{a_{i1}}:\cdots:  \eta_i^{a_{ir}}]$, where $\eta_i$ is a generator for the cyclic subgroup of $d_i$-th roots of unity in $\K$, for all $i\in [n]$, and $a_{ij}$'s are the entries of $\cl A$, then 
\[\dis V_X(I_L)(\K)=\la [P_1] \ra \times \dots \times \la [P_n] \ra.
\]
\item[(ii)] If $d_i$ divides $q-1$, for all $i\in [n]$, and  $Q$ is the matrix whose $i$-th row is $[a_{i1}(q-1)/d_i | \dots | a_{ir}(q-1)/d_i]$ which is $(q-1)/d_i$ times the $i$-th row of the matrix $\cl A$, then $V_X(I_L)(\F_q)=Y_Q$.
\end{itemize}
\end{tm}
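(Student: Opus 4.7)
The plan is to reduce both parts to the isomorphism chain
\[
V_{X(\K)}(I_L)\;\xrightarrow{\psi}\;\Hom(L_{\bb}/L,\K^*)\;\xrightarrow{\Theta}\;C_1\times\cdots\times C_n
\]
supplied by Lemmas \ref{l:Subgroup=characters} and \ref{l:characters=elliffRootsOf1}. The setup in the preamble of the theorem already gives a basis $\{\m_1,\dots,\m_n\}$ of $L_{\bb}$, namely the first $n$ columns of $\cl A^{-1}$, such that $\{d_1\m_1,\dots,d_n\m_n\}$ is a basis of $L$. Under this basis, the composite $\Theta\circ\psi$ sends $[P]$ to the tuple $\bigl(\x^{\m_1}(P),\dots,\x^{\m_n}(P)\bigr)$. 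So (i) will follow as soon as I compute $\x^{\m_j}(P_i)$ and check these points generate the right-hand side as a direct product of cyclic factors.

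The core calculation is straightforward once the identification is correctly set up. Writing $\m_j=(m_{j,1},\dots,m_{j,r})^T$ for the $j$-th column of $\cl A^{-1}$, and using that the $k$-th Cox coordinate of $P_i$ is $\eta_i^{a_{ik}}$, I get
\[
\x^{\m_j}(P_i)\;=\;\prod_{k=1}^{r}\eta_i^{a_{ik}m_{j,k}}\;=\;\eta_i^{(\cl A\,\cl A^{-1})_{ij}}\;=\;\eta_i^{\delta_{ij}}.
\]
This single identity does two jobs at once: first, it shows $[P_i]\in V_{X(\K)}(I_L)$, because the generators $d_j\m_j$ of $L$ then satisfy $\x^{d_j\m_j}(P_i)=\eta_i^{d_j\delta_{ij}}=1$ (when $i\neq j$ the exponent vanishes, when $i=j$ we use $\eta_i^{d_i}=1$); second, it shows that under $\Theta\circ\psi$ the point $[P_i]$ maps to $(1,\dots,\eta_i,\dots,1)$ with $\eta_i$ in the $i$-th slot. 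Since $\eta_i$ generates $C_i$, these images generate $C_1\times\cdots\times C_n$ as an internal direct product, and pulling back gives the decomposition $V_X(I_L)(\K)=\la[P_1]\ra\times\cdots\times\la[P_n]\ra$ claimed in (i).

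For (ii), the hypothesis $d_i\mid q-1$ lets me fix a generator $\zeta$ of $\F_q^*$ and take the preferred primitive $d_i$-th root of unity $\eta_i:=\zeta^{(q-1)/d_i}$. Substituting into the formula of (i), the $k$-th Cox coordinate of $[P_i]$ becomes $\zeta^{a_{ik}(q-1)/d_i}$, which is precisely $\zeta^{q_{ik}}$ with $q_{ik}$ the $(i,k)$-entry of $Q$. Since the group law on $T_X\subset X$ is componentwise multiplication of Cox representatives, for any $(t_1,\dots,t_n)\in\Z^n$ the element $[P_1]^{t_1}\cdots[P_n]^{t_n}$ has $k$-th coordinate $\zeta^{\sum_i t_i q_{ik}}=\t^{\q_k}$ where $\t=(\zeta^{t_1},\dots,\zeta^{t_n})$. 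As $(t_1,\dots,t_n)$ runs over $\Z^n$ the tuple $\t$ sweeps out all of $(\F_q^*)^n$, so combining with (i) gives $V_X(I_L)(\F_q)=Y_Q$.

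The step I expect to carry the most weight is the character identity $\x^{\m_j}(P_i)=\eta_i^{\delta_{ij}}$ in the second paragraph; the rest of the argument is bookkeeping once this is in hand. The only real care needed is keeping the roles of $\cl A$ and $\cl A^{-1}$ straight: the matrix $\cl A$ appears in the exponents of the points $[P_i]$, while its inverse supplies the basis of $L_{\bb}$ against which the characters are tested, and this dual appearance is precisely what produces the Kronecker delta.
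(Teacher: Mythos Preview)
Your argument is correct and is essentially the paper's own proof: both identify $V_{X(\K)}(I_L)$ with $C_1\times\cdots\times C_n$ via $[P]\mapsto(\x^{\m_1}(P),\dots,\x^{\m_n}(P))$ and then locate the generators $[P_i]$ under this map using $\cl A\cl A^{-1}=I$. The only cosmetic difference is that you invoke Lemmas~\ref{l:Subgroup=characters} and~\ref{l:characters=elliffRootsOf1} directly for the isomorphism, whereas the paper rebuilds it concretely as the monomial change of coordinates $\Phi_{\cl A}$ on $(\K^*)^r$ and then pushes points in the opposite direction.
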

\begin{proof}
The algebraic isomorphism induced by the change of basis map $\Z^r \rightarrow \Z^r$  defined via multiplication by the matrix $\cl A=[\a_1 | \cdots | \a_r]$ is as follows:
\begin{equation} \label{eq:Phi_A}
\Phi_{\cl A} : \T^r \rightarrow \T^r, \quad \Phi_{\cl A}(y_1,\dots,y_r)=(\y^{\a_1},\dots,\y^{\a_r}).
\end{equation} 
with the following inverse isomorphism:
\begin{equation} \label{eq:invPhi_A}
\Phi^{-1}_{\cl A} : \T^r \rightarrow \T^r, \quad \Phi^{-1}_{\cl A}(x_1,\dots,x_r)=(\x^{\m_1},\dots,\x^{\m_r}).
\end{equation} 
Recall that $G$ is the subgroup $V(I_{L_{\bb}})\cap (\overline{\K}^*)^r$ and thus $(x_1,\dots,x_r)\in G$ if and only if $\x^{\m_1}=\cdots=\x^{\m_n}=1$.
Therefore, if $(y_1,\dots,y_r)=\Phi^{-1}_{\cl A}(x_1,\dots,x_r)$, then it follows that $(x_1,\dots,x_r)\in G$ if and only if $y_1=\cdots=y_n=1$.
Thus, the group $G':=\Phi^{-1}_{\cl A}(G)$ is described by
\[G'= \{(1,\dots,1,y_{n+1},\dots,y_r) : y_i \in \T^r\}.\]

On the other hand, since a basis for $L$ is given by the set $\{d_1\m_1,\dots,d_n\m_n\}$, we similarly have the following:
\[(x_1,\dots,x_r)\in V(I_{L})\cap \T^r \iff \x^{d_1\m_1}=\cdots=\x^{d_n\m_n}=1.
\]
Finally, as the elements $[x_1:\cdots:x_r]\in T_X(\K)$ are orbits $G\cdot (x_1,\dots,x_r)$, it follows that
\[[x_1:\cdots:x_r]\in T_X(\K) \iff [y_1:\cdots:y_n:1:\cdots:1]\in \T^r /G'.
\]
Thus, $\Phi^{-1}_{\cl A}$ induces an isomorphism between $T_X(\K)$ and the torus $\T^r /G'$ identifying the subgroup $V_X(I_{L})(\K)$ of $T_X(\K)$ with the following subgroup:
\[\cl G_L:=\{ [y_1:\cdots:y_n:1:\cdots:1]\in \T^r /G': y_1^{d_1}=\cdots=y_n^{d_n}=1\}.
\]

If $C_i=\{y_i \in \K^* : y_i^{d_i}=1\}$, for all $i\in [n]$, then, $\cl G_L$ is isomorphic to $C_1\times \cdots \times C_n$, via
\[[y_1:\cdots:y_n:1:\cdots:1] \rightarrow (y_1,\dots,y_n).
\]

If $C_i$ is the cyclic subgroup of $\K^*$ generated by $\eta_i$, then the elements of $\cl G_L$ are of the form
\[[y_1:\cdots:y_n:1:\cdots:1]=[\eta_1^{k_1}:\cdots:\eta_n^{k_n}:1:\cdots:1],
\]
which bijectively correspond to the $n$-tuples $(k_1,\dots,k_n)$ with $0\leq k_i \leq |C_i|-1$. Let $[P]=[x_1:\cdots:x_r]$ be a point with $x_i=\y^{\a_i}=y_1^{a_{1i}}\cdots y_r^{a_{ri}}$. Then, we have
\[
\dis [P] \in V_X(I_L)(\K) \iff  \Phi^{-1}_{\cl A}(P)\in \cl G_L.
\]
Therefore, $[P] \in V_X(I_L)(\K) \iff $ there is an $n$-tuple $(k_1,\dots,k_n)$ with $0\leq k_i \leq |C_i|-1$ such that
\begin{eqnarray}
[P]&=&[\eta_1^{a_{11}k_1} \cdots \eta_n^{a_{n1}k_n} :\cdots:  \eta_1^{a_{1r}k_1} \cdots \eta_n^{a_{nr}k_n}] \label{e:point}\\
&=&[\eta_1^{a_{11}}:\cdots:  \eta_1^{a_{1r}}]^{k_1}\cdots [\eta_n^{a_{n1}}:\cdots:  \eta_n^{a_{nr}}]^{k_n}
= [P_1]^{k_1}\cdots [P_n]^{k_n}. \nonumber
\end{eqnarray}

This proves the first part (i):
\[\dis V_X(I_L)(\K)=\la [P_1] \ra \times \dots \times \la [P_n] \ra.
\]

When $\K=\F_q$ and $d_i$ divides $q-1$, $C_i$ becomes the cyclic subgroup of $\F_q^*=\la \eta  \ra$ of order $d_i$, generated by $\eta_i=\eta^{q-1/d_i}$, for all $i\in [n]$. Letting $t_i=\eta^{k_i}$ for all $i\in [n]$, we observe that the point in (\ref{e:point}) becomes
\begin{eqnarray*}
[P]&=&[\eta^{a_{11}k_1(q-1)/d_1} \cdots \eta^{a_{n1}k_n(q-1)/d_n} :\cdots:  \eta^{a_{1r}k_1(q-1)/d_1} \cdots \eta^{a_{nr}k_n(q-1)/d_n}]\\
&=&[t_1^{a_{11}(q-1)/d_1} \cdots  t_n^{a_{n1}(q-1)/d_n}:\cdots:  t_1^{a_{1r}(q-1)/d_1} \cdots t_n^{a_{nr}(q-1)/d_n}].
\end{eqnarray*}
If $Q$ is the matrix whose $i$-th row is $[a_{i1}(q-1)/d_i | \dots | a_{ir}(q-1)/d_i]$, then it follows that $[P] \in V_X(I_L)(\F_q)$ implies $[P] \in Y_Q$. 

In order to prove the converse inclusion, take $[P] \in Y_Q$. Then, there are $t_i \in \F_q^*$ such that
 \begin{eqnarray*}
[P]&=&[t_1^{a_{11}(q-1)/d_1} \cdots  t_n^{a_{n1}(q-1)/d_n}:\cdots:  t_1^{a_{1r}(q-1)/d_1} \cdots t_n^{a_{nr}(q-1)/d_n}].
\end{eqnarray*}
Since $\F_q^*=\la \eta \ra$, we have $\ell_i$ between $0$ and $q-2$ for which $t_i=\eta^{\ell_i}$, for all $i\in [n]$. Then, there is a unique $k_i$ between $0$ and $d_i-1$ such that $\ell_i \equiv k_i$ modulo $d_i$. Thus, $t_i^{(q-1)/d_i}=\eta^{\ell_i (q-1)/d_i}=\eta^{k_i (q-1)/d_i}$, yielding $[P] \in V_X(I_L)(\F_q)$.
\end{proof}

\begin{coro} \label{c:cyclicParameterization}If $V_X(I_L)(\F_q)$ is a cyclic subgroup, then $V_X(I_L)(\F_q)=Y_Q$ for a row matrix $Q=[a_{n1}(q-1)/d_n | \dots | a_{nr}(q-1)/d_n]$ which is $(q-1)/d_n$ times the $n$-th row of the matrix $\cl A$.
\end{coro}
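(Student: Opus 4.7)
The plan is to reduce to Theorem \ref{t:parameterization}(ii) by showing that the cyclic hypothesis, combined with the chain of divisibilities $d_i \mid d_{i+1}$, collapses all but the last factor of the product decomposition, so that only the $n$-th row of the full parameterizing matrix survives.

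First I would invoke Theorem \ref{t:parameterization}(i) together with Theorem \ref{t:Fq-rationalpoints} to write $V_X(I_L)(\F_q) \cong C_1\times\cdots\times C_n$, where $C_i$ is the cyclic group of $d_i$-th roots of unity in $\F_q^*$ and $|C_i|=\gcd(d_i,q-1)$. Since $d_i \mid d_{i+1}$, the orders satisfy $|C_i| \mid |C_{i+1}|$, so the exponent of this direct product coincides with $|C_n|$. A finite abelian group is cyclic if and only if its order equals its exponent, which forces $|C_1|=\cdots=|C_{n-1}|=1$. The implicit assumption that $(q-1)/d_n$ is an integer (needed for the conclusion to make sense) gives $d_n \mid q-1$ and hence $d_i \mid q-1$ for every $i\in[n]$ by divisibility of the invariant factors; combined with $|C_i|=\gcd(d_i,q-1)=1$ for $i<n$, this yields $d_1=\cdots=d_{n-1}=1$.

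Next I would apply Theorem \ref{t:parameterization}(ii) to obtain $V_X(I_L)(\F_q)=Y_{\widetilde Q}$, where $\widetilde Q$ is the full $n\times r$ matrix whose $i$-th row is $(q-1)/d_i$ times the $i$-th row of $\cl A$. For $i<n$ the relation $d_i=1$ makes the $i$-th row of $\widetilde Q$ equal to $(q-1)$ times the $i$-th row of $\cl A$. In the parameterization
\[
[P]=[t_1^{\widetilde Q_{1,1}}\cdots t_n^{\widetilde Q_{n,1}}:\cdots:t_1^{\widetilde Q_{1,r}}\cdots t_n^{\widetilde Q_{n,r}}], \quad t_i\in\F_q^*,
\]
every factor $t_i^{a_{ij}(q-1)}$ with $i<n$ equals $1$, so the parameters $t_1,\dots,t_{n-1}$ disappear entirely and one is left with the parameterization by $t_n$ alone using $(q-1)/d_n$ times the $n$-th row of $\cl A$. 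That is precisely $Y_Q$ for the single-row matrix $Q$ of the statement.

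The only delicate step is the initial reduction, namely verifying that a product $C_{m_1}\times\cdots\times C_{m_n}$ with $m_i\mid m_{i+1}$ is cyclic exactly when $m_1=\cdots=m_{n-1}=1$; once this is in hand the conclusion is a direct specialization of Theorem \ref{t:parameterization}(ii), with the vanishing of the top rows of $\widetilde Q$ in $\F_q^*$ being a routine consequence of Fermat's little theorem.
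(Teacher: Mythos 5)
Your proposal is correct and follows essentially the same route as the paper: reduce the cyclic hypothesis to $d_1=\cdots=d_{n-1}=1$, apply Theorem \ref{t:parameterization}(ii) to get the full $n\times r$ parameterizing matrix, and observe that the rows with $d_i=1$ contribute factors $t_i^{a_{ij}(q-1)}=1$ which drop out of the parameterization. The paper states the collapse $d_1=\cdots=d_{n-1}=1$ somewhat tersely, whereas you spell out the order-equals-exponent argument and make explicit the implicit hypothesis $d_n\mid q-1$ needed for the corollary to make sense; these are welcome clarifications but not a genuinely different approach.
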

\begin{proof} By Theorem \ref{t:rationalpoints}, $V_X(I_L)(\F_q)$ has $d_1\cdots d_n$ elements. Since it is cyclic, $d_1=\cdots=d_{n-1}=1$ and $|V_X(I_L)(\F_q)|=d_n$. By Theorem \ref{t:parameterization}, it follows that $V_X(I_L)(\F_q)=Y_Q'$, where $Q'$ is the matrix whose $i$-th row is $(q-1)/d_i$ times the $i$-th row of the matrix $\cl A$. When $d_i=1$, the following point 
\begin{eqnarray*}
\dis [P]=[t_1^{a_{11}(q-1)/d_1} \cdots  t_n^{a_{n1}(q-1)/d_n}:\cdots:  t_1^{a_{1r}(q-1)/d_1} \cdots t_n^{a_{nr}(q-1)/d_n}]
\end{eqnarray*}
of $Y_Q'$ becomes 
$$\dis [P]=[ t_n^{a_{n1}(q-1)/d_n}:\cdots:  t_n^{a_{nr}(q-1)/d_n}]$$ 
due to the fact that $t_i^{a_{ij}(q-1)/d_i}=1$, for all $i\in [n-1]$ and $j\in [r]$. Thus, $[P]$ is a point of $Y_Q$. As the converse is also true, the claim follows.
\end{proof}

\begin{rema} Although a parameterization $Y_Q$ of the subgroup $V_{X(\F_q)}(I_L)\cap T_X$ is given in \cite[Proposition 3.4]{sahin18}, the size of the matrix $Q$ for which $V_{X(\F_q)}(I_L)\cap T_X=Y_Q$ was always $r\times r$, even if the subgroup is cyclic. This is in contrast with the matrices provided in Theorem \ref{t:parameterization} and Corollary \ref{c:cyclicParameterization} that are more natural and succinct.
\end{rema}
We illustrate the main result of this section working with $\K=\R$ and $\K=\mathbb{C}$.
\begin{ex}
\label{ex:2real6complexSolutions} We revisit the example \ref{ex:15elements} by considering the Hirzebruch surface $X=\cl H_{2}$ over $\K=\R$ or $\K=\C$ this time. Consider the lattice $L$ spanned by the columns of the matrix $\cl B=[2\uu_1 \quad 3\uu_2]$. Then the Smith-Normal form $\cl D$ of $\cl B$  is given by 
\[
\cl D=\cl A \cl B \cl C=
\begin{bmatrix}
 0 & 0 & 1 & 1 \\  
 0 & 0 & 3 & 4 \\
 0 & 1 & 0 & 1\\
 1 & 0 & 1 & 2   
\end{bmatrix} 
\begin{bmatrix}
\phantom{-}2 & \phantom{-}0  \\  
 \phantom{-}0 & \phantom{-}3  \\ 
 -2 & \phantom{-}6  \\ 
 \phantom{-}0 & -3  
\end{bmatrix}
\begin{bmatrix}
 1 & -3  \\  
 1 & -2  \\ 
\end{bmatrix}
=\begin{bmatrix}
 1 & 0  \\  
 0 & 6  \\ 
 0 & 0  \\ 
 0 & 0  
\end{bmatrix}.
\]
The cyclic subgroup $C_1$ of the $1$-st roots of unity is trivial over $\R$ or $\C$. But, the cyclic subgroup $C_2$ of the $6$-th roots of unity are as follows respectively over $\R$ or $\C$.
\begin{eqnarray*}
C_2&=&\{c\in \R^* \: : \: c^6=1\}=\{-1,1 \},\\
C_2&=&\{c\in \C^* \: : \: c^6=1\}=\la \eta_2\ra = \la (1 + i\sqrt{3})/2 \ra \\
&=&\{1, (1 + i\sqrt{3})/2,(-1 + i\sqrt{3})/2,-1,(-1 - i\sqrt{3})/2,(1 - i\sqrt{3})/2 \}.   
\end{eqnarray*}
It follows from Theorem \ref{t:K-rationalpoints} that $|V_{X(\K)}(I_L)|=|C_1|\cdot|C_2|$ which is $2$ and $6$ over $\R$ and $\C$ respectively.
Indeed,
\[
V_{X(\K)}(I_L)=\{ [x_1:x_2:x_3:x_4] \in X(\K) \: : \: x_1^2=x_3^2 \text{ and } x_4^3=(x_2x_3^2)^3\}.
\]
Since $G=\{[\lambda_1:\lambda_2:\lambda_1:\lambda_1^{2}\lambda_2] \: : \: \lambda_1,\lambda_2\in \C^*\}$, we have
\begin{eqnarray*}
V_{X(\R)}(I_L)&=&\{[x_1:x_2:x_1:x_1^{2}x_2],[x_1:x_2:-x_1:x_1^{2}x_2] \: : \: x_3,x_4 \in \R^*\},\\
&=& \{[1:1:1:1],[1:1:-1:1] \}  \text{ and }  \\
V_{X(\C)}(I_L)&=&\{[1:1:\pm 1: 1],[1:1:\pm 1:\eta_2^2],[1:1:\pm 1:\eta_2^4]\}.
\end{eqnarray*} 
Notice that $V_{X(\C)}(I_L)=\la [P_1] \ra \times  \la [P_2] \ra$, where $[P_1]=[\eta_1^0:\eta_1^0:\eta_1^1:\eta_1^1]=[1:1:1:1]$ and $[P_2]=[\eta_2^0:\eta_2^0:\eta_2^3:\eta_2^4]=[1:1:-1:\eta_2^4]$, as stated in Theorem \ref{t:parameterization}.
The isomorphism between the groups $V_{X(\K)}(I_L)$ and $C_1 \times C_2$ is not clear at this stage. By following the steps in the proof of Theorem \ref{t:parameterization}, we shall now make this more precise. It is easy to see that
\[
\cl A^{-1}=[ \m_1 \quad \m_2 \quad  \m_3 \quad \m_4]=
\begin{bmatrix}
 \phantom{-}2 & -1 & 0 & 1 \\  
 \phantom{-}3 & -1 & 1 & 0 \\
 \phantom{-}4 & -1 & 0 & 0\\
 -3 & \phantom{-}1 & 0 & 0   
\end{bmatrix} 
.
\]
Therefore, the map $\Phi^{-1}_{\cl A} : \T^4 \rightarrow \T^4$ is given by
%\begin{equation} \label{eq:Phi_ex}
 %\Phi_{\cl A}(y_1,y_2,y_3,y_4)=(y_3,y_3^{-2}y_4,y_1y_2^3y_3,y_1y_2^4y_4)=(x_1,x_2,x_3,x_4).
%\end{equation} 
\begin{equation} \label{eq:Phi_ex}
 \Phi^{-1}_{\cl A}(x_1,x_2,x_3,x_4)=(x_1^{2}x_2^{3}x_3^{4}x_4^{-3},x_1^{-1}x_2^{-1}x_3^{-1}x_4^{},x_2,x_1)=(y_1,y_2,y_3,y_4).
\end{equation} 
 Hence, we have
\[
\Phi^{-1}_{\cl A}(V_{X(\K)}(I_L))={\cl G}_L(\K)=\{ [y_1:y_2:1:1] \in X(\K) \: : \: y_1^1=1 \text{ and } y_2^6=1\},
\]
which is canonically isomorphic to $C_1\times C_2$. Clearly, $$\Phi^{-1}_{\cl A}(1,1,\pm 1,\eta_2^j)=(1,\pm \eta_2^j,1,1), \text{ for all } j=0,2,4.$$ Thus, we have
\begin{eqnarray*}
\Phi^{-1}_{\cl A}(V_{X(\C)}(I_L))={\cl G}_L(\C)&=&\{[1:\pm 1:1:1],[1:\pm \eta_2^2:1:1],,[1:\pm \eta_2^4:1:1]\}.
\end{eqnarray*}
\begin{eqnarray*}
\text{ Similarly, we have } \Phi^{-1}_{\cl A}(V_{X(\R)}(I_L))={\cl G}_L(\R)=\{[1:1:1:1],[1:-1:1:1] \} .   \\
\end{eqnarray*}
\end{ex}

\section{Evaluation codes on subgroups of the torus $T_X(\F_q)$}

In this section, we apply the main results of the paper to compute basic parameters of toric codes on subgroups $Y=V_{X(\F_q)}(I_L)$ of the torus $T_X(\F_q)$, where a basis for $L$ is given by $\{c_1\uu_1,\dots,c_n\uu_n\}$ for positive integers $c_i$ dividing $q-1$.  

We first recall the evaluation code defined on a subset $Y=\{[P_1],\dots,[P_N]\}$ of $T_X(\F_q)$. Let $\N\beta$ be the subsemigroup of $\N^n$ generated (not necessarily minimally) by $\bb_1,\dots,\bb_r$. Recall that the Cox ring $\displaystyle S=\bigoplus_{\aa \in \N\bb}S_{\aa}$ of $X$ is multigraded by $\N\bb$ via $\deg (x_i)=\bb_i$ for $i=1,\dots,r$. For any multidegree $\aa\in\N\beta$, we have the following {\it evaluation map}
\begin{equation}\label{e:evalmap}
\dis \ev_{Y}:S_\aa\to \F_q^N,\quad F\mapsto \left(F(P_1),\dots,F(P_N)\right).
\end{equation}
The image $\cl{C}_{\aa,Y}=\text{ev}_{Y}(S_\aa)$ is a linear code, called the {\it generalized toric code}. The three basic parameters of $\cl C_{\aa,Y}$ are block-length which is $N$, the dimension which is $K=\dim_{\F_q}(\cl C_{\aa,Y})$, and the minimum distance $\delta=\delta(\cl{C}_{\aa,Y})$ which is the minimum of the numbers of nonzero components of nonzero vectors in $\cl C_{\aa,Y}$.

It is clear that the kernel of the linear map $\ev_{Y}$ equals the homogeneous piece $I(Y)_\aa$ of the vanishing ideal $I(Y)$ in degree $\aa$. Therefore, the code ${\cC}_{\aa,Y}$ is isomorphic to the $\K$-vector space $S_\aa/I(Y)_\aa$. Thus, the dimension of ${\cC}_{\aa,Y}$ is the multigraded Hilbert function $H_{Y}(\aa):=\dim_{\F_q} S_{\aa}-\dim_{\F_q} I(Y)_\aa$ of $I(Y)$. As ${\cC}_{\aa,Y}$ is a subspace of $\F_q^N$, we have $H_{Y}(\aa)\leq N$. When we have the equality $K=H_{Y}(\aa)= N$, the code becomes trivial, i.e. $\delta=\delta(\cl{C}_{\aa,Y})=1$, by the Singleton bound $\delta \leq N+1-K$. A related algebraic notion useful to eliminate these trivial codes is the so-called \textit{multigraded regularity} defined by $$\reg(Y):=\{\aa\in \N\bb \quad:\quad H_{Y}(\aa)= |Y|\}\subseteq \N^d.$$

The values of the Hilbert function $H_{Y}(\aa)$ can also be used to detect (monomially) equivalent codes. Indeed, by \cite[Proposition 4.3]{MultHFuncToricCodes16}, the codes ${\cC}_{\aa,Y}$ and ${\cC}_{\aa',Y}$ are equivalent if $H_{Y}(\aa)=H_{Y}(\aa')$ and $\aa-\aa'\in \N\bb$.

The first application of our results is that the length of the code ${\cC}_{\aa,Y}$ is given by $N=|Y|=c_1\cdots c_n$, due to Corollary \ref{c:c_i's}, where $Y=V_{X(\F_q)}(I_L)$ for the lattice $L$ with a basis given by $\{c_1\uu_1,\dots,c_n\uu_n\}$ for positive integers $c_i$ dividing $q-1$. 

In order to compute the dimension and minimum distance of the code ${\cC}_{\aa,Y}$, we need to determine a minimal generating set for the vanishing ideal $I(Y)$. We apply our Theorem \ref{t:nullstellensatz} together with a characterization of complete intersection lattice ideals given by Morales and Thoma \cite{MT2005} using the following concept.

\begin{defi} Let $A$ be a matrix whose entries are all integers. $A$ is called mixed if there is a positive and a negative entry in every column. If no square submatrix of $A$ is mixed, it is called dominating.
\end{defi} 

\begin{tm}\label{t:mixeddominating}\cite[Theorem 3.9]{MT2005} Let $L\subseteq\Z^r$ be a lattice with the property that $L\cap \N^r={0}$. Then, $I_L$ is complete intersection $\iff L$ has a basis ${\m_1,\dots,\m_k}$ such that the matrix $[\m_1\cdots \m_k]$ is mixed dominating. In the affirmative case, we have 
$ I_L=\la \textbf{x}^{\textbf{m}_1^+}-\textbf{x}^{\textbf{m}_1^-},\dots, \textbf{x}^{\textbf{m}_k^+}-\textbf{x}^{\textbf{m}_k^-} \ra. \hfill \Box$
\end{tm}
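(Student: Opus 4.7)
The result is an iff, so I would address the two directions separately, noting that the ``hypothesis'' $L\cap \N^r=\{0\}$ is exactly what guarantees every binomial $\x^{\m^+}-\x^{\m^-}$ with $\m\in L$ is a genuine binomial (neither side is $1$), equivalently that every column of a basis matrix of $L$ is mixed (has a positive and a negative entry). So in both directions the word ``mixed'' is automatic, and the real content is the dominating condition.

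For the easy direction ($\Leftarrow$), suppose $\mathcal{M}=[\m_1\mid\cdots\mid\m_k]$ is mixed dominating and let $J=\langle \x^{\m_i^+}-\x^{\m_i^-}\rangle\subseteq I_L$. Since $\rank L=k$ and $\h(I_L)=\rank L$ (standard for lattice ideals with $L\cap \N^r=\{0\}$), while $\h(J)\leq k$ from the generating set, showing $J=I_L$ automatically forces $J$ to be a complete intersection. To get $J=I_L$, I would take an arbitrary $\m=a_1\m_1+\cdots+a_k\m_k\in L$, write the associated binomial as a telescoping product, and reduce modulo $J$. The key induction is on $k$ using the dominating property: by definition, some row of $\mathcal{M}$ contains at most one nonzero entry; after permuting, that isolated entry sits in the last column $\m_k$ alone, so the corresponding variable $x_j$ only appears in $\x^{\m_k^{\pm}}$. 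One then eliminates $x_j$ via $\x^{\m_k^+}\equiv \x^{\m_k^-}\pmod{J}$ and applies the inductive hypothesis to the lattice $L'$ spanned by $\m_1,\ldots,\m_{k-1}$ inside $\Z^{r-1}$, whose basis matrix is obtained by striking row $j$ and column $k$ from $\mathcal{M}$ and is again mixed dominating.

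For the harder direction ($\Rightarrow$), assume $I_L$ is a complete intersection. Its height equals $\rank L=k$, so by a theorem of Fischer--Morris--Shapiro (or directly from the structure theory of lattice ideals) $I_L$ is minimally generated by exactly $k$ binomials, say $\x^{\m_1^+}-\x^{\m_1^-},\dots,\x^{\m_k^+}-\x^{\m_k^-}$, with $\m_1,\dots,\m_k\in L$. A short linear-algebra argument (using that the associated monomials form a regular sequence, and that any $L$-relation among the $\m_i$ would force a syzygy contradicting minimality) shows $\{\m_1,\dots,\m_k\}$ is actually a $\Z$-basis of $L$. The remaining task is to prove $\mathcal{M}=[\m_1\mid\cdots\mid\m_k]$ is dominating. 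This is the main obstacle: I would argue by contradiction, assuming some $s\times s$ submatrix $\mathcal{M}'$ is mixed. After reordering I may suppose $\mathcal{M}'$ is the top-left $s\times s$ block and involves variables $x_1,\dots,x_s$ and binomials $f_1,\dots,f_s$. The mixedness of \emph{every} column of $\mathcal{M}'$ lets me produce an element of $I_L\cap \K[x_1,\dots,x_s]$ of positive height via the $f_i$'s, while the complete intersection property (regular-sequence-ness of $f_1,\dots,f_s$) combined with the independence of the remaining variables forces a contradiction with $\h(I_L\cap \K[x_1,\dots,x_s])\leq s-1$, or equivalently with the fact that the lattice generated by the rows of $\mathcal{M}'$ has smaller rank than $s$.

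The genuinely delicate step, and where I would expect to spend most of the effort, is this last combinatorial-cum-dimensional argument establishing the dominating condition in the forward direction; it is what allows one to pass between the purely combinatorial ``mixed dominating'' statement and the algebraic complete-intersection property, and it is also where induction on $k$ (and on the number of variables effectively used) becomes indispensable.
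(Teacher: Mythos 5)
The paper does not prove this theorem at all: it is imported verbatim as \cite[Theorem 3.9]{MT2005} (Morales--Thoma), and the trailing $\Box$ in the statement signals that no proof is given in this paper. So there is no ``paper proof'' to compare your proposal against; anything you write here is reproving a cited background result.

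That said, a few remarks on your sketch, since you went to the trouble. Your framing of $L\cap\N^r=\{0\}$ as the source of the ``mixed'' property is correct. In the $\Leftarrow$ direction, however, the assertion that ``by definition, some row of $\mathcal{M}$ contains at most one nonzero entry'' is not part of the definition of dominating --- it is a nontrivial combinatorial lemma (one of the key lemmas in Fischer--Shapiro and in Morales--Thoma) that a mixed dominating matrix must possess a row with a unique nonzero entry, and this is precisely what makes the variable-elimination induction go through. Calling it definitional hides the real work. In the $\Rightarrow$ direction your sketch is frank about being a sketch: passing from ``$I_L$ is a complete intersection'' to ``a minimal binomial generating set whose exponent vectors form a $\Z$-basis of $L$'' already needs a genuine argument (binomial complete intersections can in principle be generated non-binomially, and one must rule that out), and the subsequent height/rank contradiction from a mixed square submatrix is stated only as a plan. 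None of this is a criticism in context --- the point is simply that for the purposes of this paper the theorem is a black box, and if you want an actual proof you should read Morales--Thoma directly rather than reconstruct it.
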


Applying our Theorem \ref{t:nullstellensatz} to $Y=V_{X(\F_q)}(I_L)$ we immediately obtain the equality of the ideals $I(Y)=I_L$. Using then Theorem \ref{t:mixeddominating}, one can confirm when $I(Y)=I_L$ is a complete intersection by looking at a basis of the lattice $L$. The rest of the section discusses an instance where everything works very well.

Let $X=\cl H_{\ell}$ be the Hirzebruch surface whose fan have primitive ray generators given by $\vv_1=(1,0)$, $\vv_2=(0,1)$, $\vv_3=(-1,\ell)$,
and $\vv_4=(0,-1)$, for any positive integer $\ell$. The exact sequence in (\ref{eq:SES}) becomes 
$$\dis \xymatrix{ \mathfrak{P}: 0  \ar[r] & \Z^2 \ar[r]^{\phi} & \Z^4 \ar[r]^{\beta}& \Cl(\cl H_{\ell}) \ar[r]& 0},$$
for $\phi=[\uu_1 \: \: \uu_2]$ with $\uu_1=(1,0,-1,0)$, $\uu_2=(0,1,\ell,-1)$ and $\beta=\begin{bmatrix}
1 & 0 & 1& \ell\\
0 & 1 & 0& 1  
\end{bmatrix}$ with $L_{\beta}=\la \uu_1, \uu_2\ra$.
The dual sequence in (\ref{eq:dualSES}) over $\K=\overline{\F}_q$ becomes
$$\dis \xymatrix{ \mathfrak{P}^*: 1  \ar[r] & G\ar[r]^{i} & (\K^*)^{4} \ar[r]^{\pi} & (\K^*)^2 \ar[r]& 1}$$
where $\pi:\t\mapsto (t_1t_3^{-1},t_2t_3^{\ell}t_4^{-1})$ and
$$G=\Ker(\pi)=\{(t_1,t_2,t_1,t_1^{\ell}t_2)\;|\;t_1,t_2\in\K^*\}\cong (\K^*)^2.$$ Hence, $\K$-rational points of the torus is $ T_X(\K) \cong {(\K^*)^2}\cong(\K^*)^{4} /G$ whereas $\F_q$-rational points is $ T_X(\F_q) \cong {(\F_q^*)^2}\cong(\F_q^*)^{4} /G$. 

The Cox ring $S=\F_q[x_1,x_2,x_3,x_4]$ is $\Z^2$-graded via  $$\deg(x_1)=\deg(x_3)=(1,0),\quad \deg(x_2)=(0,1), \quad \deg(x_4)=(\ell,1).$$ 

\begin{pro} Let $X$ be the Hirzebruch surface $ \mathcal{H_\ell}$. If $\{c_1\uu_1,c_2\uu_2\}$ constitute a basis for $L$, and $Y=V_{X(\F_q)}(I_L)$ then $I(Y)$ is a complete intersection generated minimally by $x_3^{c_1}-x_1^{c_1}$ and $x_4^{c_2}-x_2^{c_2}x_3^{\ell c_2}$. Furthermore, $(c_1+c_2\ell,c_2)+ \N^2 \subseteq \reg(Y)$. 
\
\end{pro}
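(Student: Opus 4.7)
The plan is to combine three of the earlier results: Theorem~\ref{t:nullstellensatz} to pass from $Y$ to the lattice ideal, Theorem~\ref{t:mixeddominating} to recognize $I_L$ as a minimal complete intersection, and a direct Koszul computation for the regularity claim.

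First, since each $c_i$ divides $q-1$, we have $(q-1)\uu_i=\bigl((q-1)/c_i\bigr)(c_i\uu_i)\in L$ for $i=1,2$, so $(q-1)L_\bb\subseteq L\subseteq L_\bb$, and Theorem~\ref{t:nullstellensatz} yields $I(Y)=I_L$. Computing $c_1\uu_1=(c_1,0,-c_1,0)$ and $c_2\uu_2=(0,c_2,\ell c_2,-c_2)$, the corresponding binomials are exactly $F_1:=x_3^{c_1}-x_1^{c_1}$ and $F_2:=x_4^{c_2}-x_2^{c_2}x_3^{\ell c_2}$. The matrix $\cl B=[\,c_1\uu_1\mid c_2\uu_2\,]$ is mixed, and a brief enumeration of its six $2\times 2$ submatrices shows that each one has a column of constant sign and is therefore not mixed; together with $L\cap\N^4=\{0\}$ (forced by the last two coordinates of any non-trivial non-negative combination), Theorem~\ref{t:mixeddominating} gives the complete intersection presentation $I(Y)=I_L=\la F_1,F_2\ra$, which is minimal because $\h(I_L)=\rank L=2$ equals the number of generators.

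For the regularity, note $\deg F_1=(c_1,0)$ and $\deg F_2=(\ell c_2,c_2)$. As $(F_1,F_2)$ is a regular sequence, the Koszul complex resolves $S/I_L$ and the multigraded Hilbert series becomes
\[
\sum_{\aa}H_{S/I_L}(\aa)\,t^{\aa}=\frac{(1-t_1^{c_1})(1-t_1^{\ell c_2}t_2^{c_2})}{(1-t_1)^2(1-t_2)(1-t_1^\ell t_2)}=\frac{1}{(1-t_1)(1-t_2)}\sum_{i=0}^{c_1-1}\sum_{j=0}^{c_2-1}t_1^{\,i+j\ell}\,t_2^{\,j},
\]
after cancelling one factor of $(1-t_1)$ and one of $(1-t_1^\ell t_2)$ from numerator and denominator. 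Expanding the geometric factor $1/((1-t_1)(1-t_2))$, the coefficient $H_{S/I_L}(a_1,a_2)$ counts pairs $(i,j)$ with $0\le i\le c_1-1$, $0\le j\le c_2-1$, $i+j\ell\le a_1$, and $j\le a_2$. For $(a_1,a_2)\in(c_1+\ell c_2,c_2)+\N^2$, the extremal pair $(c_1-1,c_2-1)$ satisfies $(c_1-1)+(c_2-1)\ell<c_1+\ell c_2\le a_1$ and $c_2-1<c_2\le a_2$, so all $c_1c_2$ pairs are counted; combined with $|Y|=c_1c_2$ from Corollary~\ref{c:c_i's}, we get $H_Y(\aa)=|Y|$, i.e.\ $\aa\in\reg(Y)$.

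The main obstacle is the clean cancellation in the Hilbert series that reduces the problem to counting pairs satisfying two elementary inequalities; once that reduction is in place, both the complete-intersection statement and the regularity bound follow from previously established machinery in a purely bookkeeping manner.
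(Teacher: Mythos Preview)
Your argument is correct. The first two steps (applying Theorem~\ref{t:nullstellensatz} and verifying that the matrix $[c_1\uu_1\mid c_2\uu_2]$ is mixed dominating so that Theorem~\ref{t:mixeddominating} yields the minimal complete intersection presentation) are exactly the paper's proof.

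For the regularity inclusion, the paper takes a different path: it observes that the degrees $\deg F_1=c_1(1,0)$ and $\deg F_2=c_2(\ell,1)$ both lie in the nef cone $\cl K=\N(1,0)+\N(\ell,1)$, i.e.\ are semiample, and then invokes \cite[Theorem~3.16]{MultHFuncToricCodes16}, which for complete intersections in semiample degrees gives $\deg F_1+\deg F_2+\N^2\subseteq\reg(Y)$ directly. Your route instead writes down the Koszul resolution, factors the multigraded Hilbert series as
\[
\frac{(1-t_1^{c_1})(1-t_1^{\ell c_2}t_2^{c_2})}{(1-t_1)^2(1-t_2)(1-t_1^\ell t_2)}
=\frac{1}{(1-t_1)(1-t_2)}\sum_{i=0}^{c_1-1}\sum_{j=0}^{c_2-1}t_1^{i+j\ell}t_2^{j},
\]
and reads off $H_Y(\aa)=c_1c_2$ from a lattice-point count. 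This is more self-contained (it avoids the external reference and the semiampleness discussion) and in fact yields slightly more, namely $H_Y(\aa)=c_1c_2$ already for $\aa\in((c_1-1)+(c_2-1)\ell,\,c_2-1)+\N^2$, anticipating the sharper equality proved later in the paper. The paper's approach, by contrast, is shorter and conceptually situates the result within the general framework for complete intersections on toric varieties. One small imprecision: your parenthetical that $L\cap\N^4=\{0\}$ is ``forced by the last two coordinates'' is not quite right, as one also needs the first two coordinates (or simply the observation that $L\subseteq L_\bb=\ker\bb$ with all $\bb_i\in\N^2\setminus\{0\}$); the conclusion, however, is correct.
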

\begin{proof} By Theorem \ref{t:nullstellensatz}, we get $I(Y)=I_L$.
Since the columns of $\verb|ML|$ constitute a basis of $L$, where
	$$\verb|ML|=[c_1 u_1 \quad c_2 u_2]=\begin{bmatrix}~c_1 & ~0 & -c_1&\quad 0\\~~~~~~0 & ~~ c_2 & \ell c_2 & ~ -c_2
	\end{bmatrix}^T$$ 
and $\verb|ML|$ is mixed dominating, it follows from Theorem \ref{t:mixeddominating} that $$I(Y)=\la x_3^{c_1}-x_1^{c_1},x_4^{c_2}-x_2^{c_2}x_3^{\ell c_2}\ra.$$
There is a geometrically significant subsemigroup $\cl K$ contained in the semigroup $\N\bb =\N^2$ whose elements are semiample degrees corresponding to the classes of numerically effective line bundles on $X$, which is given by 
$$\displaystyle \cl K =\bigcap_{\sigma \in \Sigma} \N \check{\sigma}=\N(1,0)+\N(\ell,1),$$ where $\N \check{\sigma}$ is the semigroup generated by $\bb_i$ corresponding to rays $\rho_i\notin \sigma$, for details see \cite[Theorem 6.3.12]{CLS-ToricVarieties}. Thus, the degrees $c_1(1,0)$ and $c_2(\ell,1)$ of the generators are semiample. Therefore, $(c_1+c_2\ell,c_2)+ \N^2 \subseteq \reg(Y)$ by \cite[Theorem 3.16]{MultHFuncToricCodes16}.
\end{proof}

 It is now time to the compute the other main parameters of the toric code $\mathcal{C}_{\aa,Y}$, where $\alpha=(c,d)\in \N\beta$. Hansen computed these parameters for the case $Y=T_X(\F_q)$, $c<q-1$ and $d=b$, where $b$ is to be defined below, see \cite{HansenAAECC}. More recently, the conditions $c<q-1$ and $d=b$ is relaxed in \cite{BaranSahin} again for the torus $Y=T_X(\F_q)$. 
 \begin{tm}\label{T:codesOnHirzebruch} Let $\{c_1\uu_1,c_2\uu_2\}$ be a basis for $L$ with $c_1$ and $c_2$ dividing $q-1$, and $Y=V_{X(\F_q)}(I_L)$ be the subgroup of the torus $T_X$ of the Hirzebruch surface $X= \mathcal{H_\ell}$ over $\K=\F_q$ satisfying $c_1\leq \ell c_2$. If $\alpha=(c,d)\in \N\beta=\N^2$, then the dimension $K$ of the toric code $\mathcal{C}_{\aa,Y}$  is given by

 \[
 \begin{cases}
(b+1)[c+1-b\ell /2], & \text{if }c<c_1 \text{ and } b\leq c_2-1 \\
  c_2[c+1- (c_2-1)\ell/2] & \text{if }c<c_1 \text{ and } c_2-1<b\\
  c_1(b'+1)+(b-b')[c+1-\ell(b+b'+1)/2], & \text{if }c\geq c_1\text{ and }b'\leq b < c_2-1\\
   c_1(b'+1)+(c_2-1-b')[c+1-\ell(c_2+b')/2], & \text{if }c\geq c_1 \text{ and } b'< c_2-1\leq b\\
   c_1c_2, & \text{if }c\geq c_1 \text{ and } b' \geq c_2-1,
 \end{cases}
 \] and  the minimum distance equals
 \[
 \delta({\cC}_{\aa,Y}) =
  \begin{cases}
 c_2[c_1-c], & \text{if }c<c_1 \text{ and } b\leq c_2-1 \\
  c_2[c_1-c], & \text{if }c<c_1 \text{ and }  c_2-1<b\\
  c_2-b', & \text{if }c\geq c_1\text{ and }b'\leq b < c_2-1\\
   c_2-b', & \text{if }c\geq c_1 \text{ and } b'< c_2-1\leq b\\
   1, & \text{if }c\geq c_1 \text{ and } b' \geq c_2-1.
 \end{cases}
 \] 
 where   $b$ is the greatest non-negative integer with $c-b\ell\geq 0\;\mbox{and}\;d-b\geq 0$, and when $c\geq c_1$, $b'$ is the greatest non-negative integer with  $c-b'\ell\geq c_1-1\;\mbox{and}\;d-b'\geq 0$. 
    \end{tm}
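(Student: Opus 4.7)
The plan is to reduce the problem to the structure of the vanishing ideal $I(Y)$ and then perform a Gr\"obner-basis analysis. By Theorem \ref{t:nullstellensatz} together with the proposition immediately preceding the theorem, $I(Y) = \la F_1, F_2 \ra$ with $F_1 = x_3^{c_1} - x_1^{c_1}$ and $F_2 = x_4^{c_2} - x_2^{c_2} x_3^{\ell c_2}$; the length is $N = c_1 c_2$ by Corollary \ref{c:c_i's}.

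For the dimension I fix the lex order with $x_4 > x_3 > x_2 > x_1$. Since the leading monomials $x_3^{c_1}$ and $x_4^{c_2}$ are coprime, $\{F_1, F_2\}$ is a Gr\"obner basis of $I(Y)$, so an $\F_q$-basis of $S_{(c,d)}/I(Y)_{(c,d)}$ is given by the standard monomials $x_1^{a_1} x_2^{a_2} x_3^{a_3} x_4^{a_4}$ with $a_1 + a_3 + \ell a_4 = c$, $a_2 + a_4 = d$, $0 \leq a_3 < c_1$, and $0 \leq a_4 < c_2$. Eliminating $a_1 = c - a_3 - \ell a_4$ and $a_2 = d - a_4$, the index $a_4$ ranges over $[0, \min(b, c_2 - 1)]$, and for each such $a_4$ the number of admissible $a_3$ is $\min(c_1, c - \ell a_4 + 1)$, switching from $c - \ell a_4 + 1$ to $c_1$ exactly at $a_4 = b'$. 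Splitting the outer sum at $b'$ and at $c_2 - 1$ and simplifying with $\sum_{j=m+1}^{M} j = (M-m)(M+m+1)/2$ yields the five displayed formulas for $K$.

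For the minimum distance I parameterize $Y$ via torus coordinates $t_1 = x_1/x_3$ and $t_2 = x_2 x_3^\ell / x_4$; the defining relations become $t_1^{c_1} = t_2^{c_2} = 1$, so $Y \cong C_1 \times C_2$ where $C_i \subset \F_q^*$ is cyclic of order $c_i$. Taking the section $x_3 = x_4 = 1$, a polynomial $F \in S_{(c,d)}$ evaluates to $\tilde F(t_1, t_2) = F(t_1, t_2, 1, 1)$, whose Newton polytope lies in $\{(a_1, a_2) : a_1 \geq 0,\ 0 \leq a_2 \leq d,\ a_1 + \ell(d - a_2) \leq c\}$. The claimed upper bounds for $\delta$ follow from explicit extremal polynomials: for $c < c_1$ take $F = x_2^d \prod_{i=1}^{c} (x_1 - \zeta^i x_3)$, with $\zeta$ a primitive $c_1$-th root of unity in $\F_q^*$; this vanishes on exactly $c \cdot c_2$ points of $Y$. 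For $c \geq c_1$ with $b' < c_2 - 1$ take $\prod_{i=1}^{c_1 - 1}(x_1 - \zeta^i x_3) \cdot \prod_{j=1}^{b'}(x_4 - \eta^j x_2 x_3^\ell) \cdot x_1^{c - c_1 + 1 - b'\ell} x_2^{d - b'}$ (with $\eta$ primitive of order $c_2$); this lies in $S_{(c,d)}$ by the degree computation and vanishes on exactly $(c_1 - 1)c_2 + b'$ points of $Y$. The trivial case $\delta = 1$ corresponds to $K = c_1 c_2 = N$. For the matching lower bounds, I apply a Schwartz--Zippel style argument on $C_1 \times C_2$: for each $\tau \in C_1$ with $\tilde F(\tau, \cdot) \not\equiv 0$, the restriction is a polynomial in $t_2$ of degree at most $d$, hence has at most $\min(d, c_2 - 1)$ zeros in $C_2$; the set of $\tau$ where $\tilde F(\tau, \cdot) \equiv 0$ is controlled by an auxiliary gcd-polynomial of degree at most $c$, so there are at most $\min(c, c_1 - 1)$ such $\tau$. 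Optimizing over admissible $\tilde F$ in each of the five regimes reproduces the extremal values.

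The main obstacle is the tightness of the lower bound for $\delta$ in the regime $c \geq c_1$: one must show that the constraints $t_1^{c_1} = t_2^{c_2} = 1$ together with the inequality $c_1 \leq \ell c_2$ force any near-extremal $\tilde F$ to be (essentially) of the product form above. The argument follows the template developed for the classical toric code case $Y = T_X(\F_q)$ in \cite{HansenAAECC, BaranSahin}, suitably adapted to the smaller subgroup and to the additional cyclic constraints; it is the case-by-case matching of the upper and lower bounds that accounts for the split of the theorem into five subcases.
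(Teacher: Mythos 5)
Your dimension argument is essentially the paper's: both count standard monomials of $S_\alpha/I(Y)_\alpha$ using the complete intersection $I(Y)=\langle x_3^{c_1}-x_1^{c_1},\,x_4^{c_2}-x_2^{c_2}x_3^{\ell c_2}\rangle$. (You cap $a_3<c_1$ via the lex leading term $x_3^{c_1}$, while the paper caps $a_1\le \min(c-\ell a_4, c_1-1)$ by rewriting $x_1^{c_1}\to x_3^{c_1}$; these give the same count, so that part is fine.)

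The minimum distance is where you have a genuine gap. You exhibit the correct extremal polynomials in each regime, but these only give \emph{upper} bounds on $\delta$. The matching lower bound -- i.e.\ the upper bound on $|V_X(F)\cap Y|$ over all nonzero $F\in S_\alpha$ -- is the technical heart of the theorem and you defer it to a ``Schwartz--Zippel style argument'' plus ``the template developed for the classical toric code case, suitably adapted,'' explicitly flagging the tightness in the regime $c\ge c_1$ as ``the main obstacle.'' That is not a proof. Moreover your sketch conditions on $\tau\in C_1$ (fixing the $t_1$-coordinate) and bounds the degree of the resulting univariate restriction in $t_2$ by $d$; this is the wrong direction and gives no useful bound since $d$ may be large. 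The paper's argument conditions on the $C_2$-direction instead: it sets $J=\{j: x_4-\eta_2^jx_2x_3^\ell \mid F\}$, establishes $|V_X(F)\cap Y|\le c_1|J|+(c_2-|J|)\deg_{x_1}F$, and then crucially exploits the degree constraint $\deg_{x_1}F\le c-\ell|J|$ (coming from $a_1+a_3+\ell a_4=c$) together with the hypothesis $c_1\le\ell c_2$ to carry out a careful two-subcase optimization ($|J|\le b'$ versus $|J|>b'$). That chain -- divide out the $(x_4-\eta_2^jx_2x_3^\ell)$ factors, track the resulting $x_1$-degree, optimize over $|J|$ -- is exactly what is missing from your proposal, and your auxiliary-gcd-polynomial remark does not supply it.
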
 
   \begin{proof} %Consider  torus  $T_X$ of the Hirzebruch surface $ \mathcal{H_\ell}$ over $\K$ parameterized by identity matrix such that $\ell$ is positive integer. 
We start by giving a parameterization which will be crucial especially in computing the minimum distance below. So, we first see that  $$Q=\begin{bmatrix}~~(q-1)/c_1 & ~~0 & ~~0 &~~0\\~~0 & ~~0 & ~~0 & ~(q-1)/c_2 \end{bmatrix}$$  parameterizes the subgroup $Y=V_{X(\F_q)}(I_L)$. Let $\eta_1=\eta^{(q-1)/c_1}$ and $\eta_2=\eta^{(q-1)/c_2}$ where $\eta$ is a generator for the cyclic group $\K^*$. Then, $Y_Q$ consists of the following $c_1c_2$ points $P_{i,j}=[\eta_1^i:1:1:\eta_2^j]\in T_X$, for $i\in [c_1]$ and $j\in [c_2]$. Since these points satisfy the equations:
\[ x_1^{c_1}-x_3^{c_1}=0 \text{ and } x_4^{c_2}-x_2^{c_2}x_3^{\ell c_2}=0 \]
it follows that they belong to $Y$, yielding $Y_Q\subseteq Y$.	By Corollary \ref{c:c_i's}, $|Y|=c_1c_2$ forcing the equality $Y_Q=Y$.
	
Let us find  a $\K-$basis $B_\alpha:=\{\textbf{x}^\textbf{a}\:|\:\mbox{deg}(\textbf{x}^\textbf{a})=\beta \textbf{a}=\alpha\}$ for the vector space $S_\alpha$ for any $\alpha=(c,d)\in \N\beta=\N^2$. Since $b$ is the greatest non-negative integer with $\alpha=(c,d)=b(\ell,1)+(a,a')$ for some non-negative integers $a=c-b\ell\geq 0$ and $a'=d-b\geq 0$, we have $0\leq a_4\leq b$ if $\deg (\textbf{x}^\textbf{a})=\beta \textbf{a}=\alpha$. For a fixed  $0\leq a_4\leq b$, and fixed $0\leq a_1\leq c-\ell a_4$ the powers $a_2=d-a_4$ and $a_3=c-\ell a_4-a_1$ in $\textbf{x}^\textbf{a}=x_1^{a_1}x_2^{a_2}x_3^{a_3}x_4^{a_4}$ are fixed too. So, $$B_\alpha=\{x_1^{a_1}x_2^{d-a_4}x_3^{c-\ell a_4-a_1}x_4^{a_4}\:|\:\:0\leq a_4\leq b \text{ and } 0\leq a_1\leq c-\ell a_4\}.$$ 

Let us define the following two key numbers: 
\[ \mu_1:=\min \{c,c_1-1\} \text{ and } \mu_2:=\min \{b,c_2-1\}.
\]

Since $x_1^{c_1}= x_3^{c_1}$ and $x_4^{c_2}=x_2^{c_2}x_3^{\ell c_2}$ in the ring $S/I(Y)$, a basis for $S_{\aa}/I_{\aa}(Y)$ is  $$\bar{B}_\alpha=\{x_1^{a_1}x_2^{d-a_4}x_3^{c-\ell a_4-a_1}x_4^{a_4}\:|\:\:0\leq a_4\leq \mu_2 \text{ and } 0\leq a_1\leq \min \{c-\ell a_4,\mu_1\}\}.$$
It is clear that $S_{(c,d)}=x_2^{d-b}S_{(c,b)}$. Since $x_2=1$ on $Y$, the images $\ev_Y(S_{(c,d)})$ and $\ev_Y(S_{(c,b)})$ of the evaluation maps defined on $S_{(c,d)}$ and $S_{(c,b)}$ are the same code ${\cC}_{(c,b),Y}$ when $d>b$. Hence, if $\lfloor c /\ell \rfloor < d$, we have $ b=\min \{\lfloor c /\ell \rfloor,d\}=\lfloor c /\ell \rfloor$, and thus the codes ${\cC}_{(c,d),Y}$ and ${\cC}_{(c,b),Y}$ are the same whose dimensions are
\[\mbox{dim}_{\F_q}{\cC}_{\aa,Y}=H_Y(\aa)=|\bar{B}_{(c,b)}|, \text{ for } \aa=(c,d) \text{ with } b=\min \{\lfloor c /\ell \rfloor,d\}=\lfloor c /\ell \rfloor.
\]
Therefore, it suffices to study codes ${\cC}_{\aa,Y}$, where $\aa=(c,d)$ with $d\leq \lfloor c /\ell \rfloor$. Thus, we assume from now on that 
\begin{equation}
 \label{eq:assumptionB}
d\leq \lfloor c /\ell \rfloor \text{ so that } b=\min \{\lfloor c /\ell \rfloor,d\}=d.   
\end{equation} 

\textbf{The dimension.}

\textbf{Case I:} Let $c=a+b\ell<c_1$. Then, $\mu_1=c$ and thus we have
\begin{eqnarray} \label{e:dimensionCase1}
\quad \mbox{dim}_{\F_q}{\cC}_{\aa,Y}&=&|\bar{B}_\alpha|=\sum\limits_{a_4=0}^{\mu_2}(c-\ell a_4+1) \\
&=& (\mu_2+1)(c+1)- \mu_2(\mu_2+1)\ell/2=(\mu_2+1)[c+1- \mu_2 \ell/2]. \nonumber
\end{eqnarray}

\begin{figure}[htb!]
\centering
\begin{minipage}{0.5\textwidth}
  \centering
\begin{tikzpicture}[scale=0.7]
\def\q{7};
\def\cOne{\q};
\def\cTwo{\q-2};
\def\l{2};

\def\bPrime{3};
\pgfmathsetmacro\c{\q-2};
\pgfmathsetmacro\cl{0.5*(\c)};
  %\pgfmathsetmacro\b'{Floor(3.5)};
			
			\pgfmathsetmacro\qMinusOne{\q-1};
			\pgfmathsetmacro\d{2};
            \pgfmathsetmacro\b{2};

\draw[step=1cm,gray!30!white,very thin] (0,0) grid (\q,\q-1);
\draw[thick,->] (0,0) -- (\q,0) node[anchor=south east] {};
\draw[thick,->] (0,0) -- (0,\q-1) node[anchor=south east] {};
\draw (\c,0) -- (0,\cl);

%\draw [dashed, blue, ultra thick] (1.75,4) -- (0,3.5);
\draw[dashed, red , thick] (\cOne-1,0) -- (\cOne-1,\cTwo-1) --(0,\cTwo-1);
\node [left] at (0,\cTwo-1) {$c_2$-$1$};
\node [below] at (\cOne-1,0) {$c_1$-$1$};

\draw[dashed, red , thick] (\c-\l*\b,0) -- (\c-\l*\b,\b) --(0,\b);
%\draw[dashed, red , thick] (\cOne-1,\cl-1) --(0,\cl-1);
%\draw[dashed, red , thick] (\c-\l*\bPrime,0) -- (\c-\l*\bPrime,\bPrime) --(0,\bPrime);
\node [above right] at (\c-\l*\b,\b) {$a_1+\ell a_4=c$};
\node [left] at (-0.12,\cl+0.14) {$c/\ell$};
%\node [left] at (0,\cl-1) {$(c$-$c_1$+$1$)/\ell$};

%\node [left] at (0,\bPrime) {$b'$};
\node [left] at (0,\b) {$b=d$};
\node [below] at (\c,-0.12) {$c$};
\node [below left] at (0,0) {$0$};
%\node [below] at (\c-\l*\bPrime,0) {$c-\ell b'$};
\node [below] at (\c-\l*\b,0) {$c-\ell b$};
\filldraw[fill=blue!40!white, draw=black] (0,0) rectangle (\c-\l*\b,\b);
\filldraw[fill=red!40!white, draw=black] (\c-\l*\b,0)-- (\c-\l*\b,\b)--(\c,0)--cycle;
\end{tikzpicture} 
  \caption{Case I with $b\leq c_2-1$}
  \label{fig:case1a}
\end{minipage}%
\begin{minipage}{0.5\textwidth}
  \centering
 \begin{tikzpicture}[scale=0.7]
\def\q{7};
\def\cOne{\q};
\def\cTwo{2};
\def\l{2};

\def\bPrime{3};
\pgfmathsetmacro\c{\q-2};
\pgfmathsetmacro\cl{0.5*(\c)};
  %\pgfmathsetmacro\b'{Floor(3.5)};
			
			\pgfmathsetmacro\qMinusOne{\q-1};
			\pgfmathsetmacro\d{2};
            \pgfmathsetmacro\b{2};

\draw[step=1cm,gray!30!white,very thin] (0,0) grid (\q,\q-1);
\draw[thick,->] (0,0) -- (\q,0) node[anchor=south east] {};
\draw[thick,->] (0,0) -- (0,\q-1) node[anchor=south east] {};
\draw (\c,0) -- (0,\cl);

%\draw [dashed, blue, ultra thick] (1.75,4) -- (0,3.5);
\draw[dashed, red , thick] (\cOne-1,0) -- (\cOne-1,\cTwo-1) --(0,\cTwo-1);
\node [left] at (0,\cTwo-1) {$c_2$-$1$};
\node [below] at (\cOne-1,0) {$c_1$-$1$};

\draw[dashed, red , thick] (\c-\l*\b,0) -- (\c-\l*\b,\b) --(0,\b);
%\draw[dashed, red , thick] (\cOne-1,\cl-1) --(0,\cl-1);
%\draw[dashed, red , thick] (\c-\l*\bPrime,0) -- (\c-\l*\bPrime,\bPrime) --(0,\bPrime);
\node [above right] at (\c-\l*\b,\b) {$a_1+\ell a_4=c$};
\node [left] at (-0.12,\cl+0.14) {$c/\ell$};
%\node [left] at (0,\cl-1) {$(c$-$c_1$+$1$)/\ell$};

%\node [left] at (0,\bPrime) {$b'$};
\node [left] at (0,\b) {$b=d$};
\node [below] at (\c,-0.12) {$c$};
\node [below left] at (0,0) {$0$};
%\node [below] at (\c-\l*\bPrime,0) {$c-\ell b'$};
\node [below] at (\c-\l*\b,0) {$c-\ell b$};
\filldraw[fill=blue!40!white, draw=black] (0,0) rectangle (\c-\l*\cTwo+\l,\cTwo-1);
\filldraw[fill=red!40!white, draw=black] (\c-\l*\cTwo+\l,0)-- (\c-\l*\cTwo+\l,\cTwo-1)--(\c,0)--cycle;
\end{tikzpicture} 
  \caption{Case I with $b > c_2-1$}
  \label{fig:case1b}
\end{minipage}
\end{figure}

If $b\leq c_2-1$, then $\mu_2=b$ and thus, we have 
\begin{eqnarray} 
\dim_{\F_q}{\cC}_{\aa,Y}= (b+1)[c+1- b\ell/2]. \nonumber
\end{eqnarray}
It is worth to notice that this dimension is the number of monomials in 
\[
\bar{B}_\alpha=\{x_1^{a_1}x_2^{d-a_4}x_3^{c-\ell a_4-a_1}x_4^{a_4}\:|\:\:0\leq a_4\leq b \text{ and } 0\leq a_1\leq c-\ell a_4\}
\]
or equivalently the number of lattice points inside the polygon depicted in Figure \ref{fig:case1a} which is defined by the inequalities $0\leq a_4\leq b \text{ and } 0\leq a_1\leq c-\ell a_4$.

If $c_2-1<b$, then $\mu_2=c_2-1$ and hence, we have 
\begin{eqnarray} 
\dim_{\F_q}{\cC}_{\aa,Y}= c_2[c+1- (c_2-1)\ell/2]. \nonumber
\end{eqnarray}

As before, this dimension is the number of monomials in 
\[
\bar{B}_\alpha=\{x_1^{a_1}x_2^{d-a_4}x_3^{c-\ell a_4-a_1}x_4^{a_4}\:|\:\:0\leq a_4\leq c_2-1 \text{ and } 0\leq a_1\leq c-\ell a_4\}
\]
or equivalently the number of lattice points inside the polygon depicted in Figure \ref{fig:case1b} which is defined by the inequalities $0\leq a_4\leq c_2-1 \text{ and } 0\leq a_1\leq c-\ell a_4$.

 \textbf{Case II:} Let $c\geq c_1 $. Then $\mu_1=c_1-1$. Recall that $b'$ is the greatest non-negative integer with the property  $c-b'\ell\geq c_1-1\;\mbox{and}\;d-b'\geq 0$. In fact, if $a\geq c_1-1$, then $c-\ell b=a \geq c_1-1$, and thus $b'=b$. Otherwise, $$b'=\lfloor (c-c_1+1)/\ell \rfloor=b-\lceil (c_1-1-a)/\ell \rceil.$$ 
 
 \textbf{Case II(i): $b'\geq c_2-1$.} 
 \begin{figure}[htb!]
     \centering
     \begin{tikzpicture}[scale=0.8]
\def\q{11};
\def\cOne{\q-7};
\def\cTwo{3};
\def\l{2};

\def\bPrime{3};
\pgfmathsetmacro\c{\q-1};
\pgfmathsetmacro\cl{0.5*(\c)};
  %\pgfmathsetmacro\b'{Floor(3.5)};
			
			\pgfmathsetmacro\qMinusOne{\q-1};
			\pgfmathsetmacro\d{3};
            \pgfmathsetmacro\b{3};

\draw[step=1cm,gray!30!white,very thin] (0,0) grid (\q,\q-4);
\draw[thick,->] (0,0) -- (\q,0) node[anchor=south east] {};
\draw[thick,->] (0,0) -- (0,\q-4) node[anchor=south east] {};
\draw (\c,0) -- (0,\cl);

%\draw [dashed, blue, ultra thick] (1.75,4) -- (0,3.5);
\draw[dashed, red , thick] (\cOne-1,0) -- (\cOne-1,\cTwo-1) --(0,\cTwo-1);
\node [left] at (0,\cTwo-1) {$c_2$-$1$};
\node [below] at (\cOne-1,0) {$c_1$-$1$};

\draw[dashed, red , thick] (\c-\l*\b,0) -- (\c-\l*\b,\b) --(0,\b);
%\draw[dashed, red , thick] (\cOne-1,\cl-1) --(0,\cl-1);
%\draw[dashed, red , thick] (\c-\l*\bPrime,0) -- (\c-\l*\bPrime,\bPrime) --(0,\bPrime);
\node [above right] at (\c-\l*\b,\b) {$a_1+\ell a_4=c$};
\node [left] at (0,\cl) {$c/\ell$};
%\node [left] at (0,\cl-1) {$(c$-$c_1$+$1$)/\ell$};

%\node [left] at (0,\bPrime) {$b'$};
\node [left] at (0,\b) {$b'$};
\node [below] at (\c,0) {$c$};
\node [below left] at (0,0) {$0$};
%\node [below] at (\c-\l*\bPrime,0) {$c-\ell b'$};
\node [below] at (\c-\l*\b,0) {$c$-$\ell b'$};
\filldraw[fill=blue!40!white, draw=black] (0,0) rectangle (\cOne-1,\cTwo-1);

\end{tikzpicture}  
     \caption{Case II (i)}
     \label{fig:case2i}
 \end{figure}
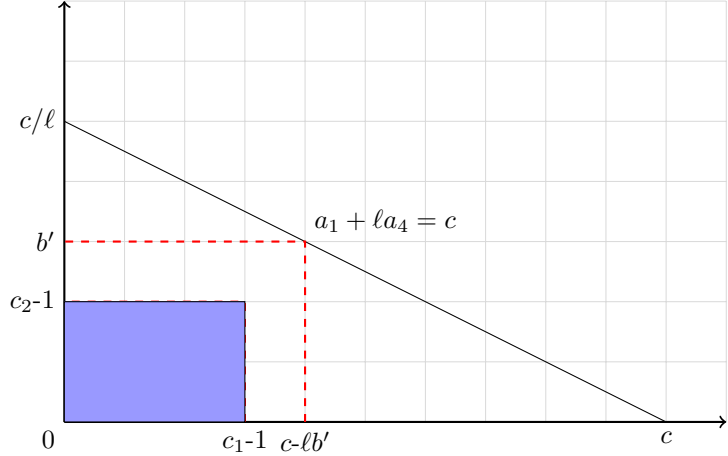

 If $b'\geq c_2-1$, then $\mu_2=c_2-1$ as $b\geq b'$. So, for every choice of $a_4$ satisfying $0 \leq a_4 \leq c_2-1\leq b'$ we have $c-\ell a_4 \geq c_1-1$, and hence $0\leq a_1 \leq c_1-1$. Thus, we have 
\begin{eqnarray} \label{e:dimensionN}
\mbox{dim}_{\F_q}{\cC}_{\aa,Y}=|\bar{B}_\alpha|=c_1c_2=N,
\end{eqnarray}
which is also the number of lattice points in the rectangle depicted in Figure \ref{fig:case2i} and described by the inequalities $0\leq a_1 \leq c_1-1$ and $0 \leq a_4 \leq c_2-1$. Since the dimension reaches its maximum value, the code ${\cC}_{\aa,Y}$ is trivial, that is, $\delta({\cC}_{\aa,Y})=1$. 

 \textbf{Case II(ii): $b' < c_2-1$.} 
 
By the definition of $b'$, we have the following two cases
\[ \min \{c-\ell a_4,c_1-1\}= \begin{cases} 
      c_1-1 & \text{ if } \quad 0\leq a_4 \leq b' \\
      c-\ell a_4 & \text{ if } \quad b'<a_4\leq \mu_2.
   \end{cases}
\]
 So, if $0\leq a_4\leq b'$ then $0\leq a_1 \leq c_1-1$ but if $a_4 > b'$ then $0\leq a_1 \leq c-\ell a_4$, yielding the description of $\bar{B}_\alpha$ to be the disjoint union of the following two sets
\begin{eqnarray} \label{e:decompositionBalpha}
\quad \quad \bar{B}_\alpha(1)&=&\{x_1^{a_1}x_2^{d-a_4}x_3^{c-\ell a_4-a_1}x_4^{a_4}\:|\:\:0\leq a_4\leq b' \text{ and } 0\leq a_1\leq c_1-1\} \text{ and } \\
\quad \quad \bar{B}_\alpha(2)&=&\{x_1^{a_1}x_2^{d-a_4}x_3^{c-\ell a_4-a_1}x_4^{a_4}\:|\:\:b'+1\leq a_4\leq \mu_2 \text{ and } 0\leq a_1\leq c-\ell a_4\}.
\end{eqnarray}
Notice that $|\bar{B}_\alpha(1)|$ is the number of lattice points in the rectangle in Figure \ref{fig:case2ii} and $|\bar{B}_\alpha(2)|$ is the number of lattice points in the trapezoid in Figure \ref{fig:case2ii}.
Therefore, we have
\begin{eqnarray} \label{e:dimensionFormula}
\mbox{dim}_{\F_q}{\cC}_{\aa,Y}&=&|\bar{B}_\alpha|=c_1(b'+1)+\sum\limits_{a_4=b'+1}^{\mu_2}(c-\ell a_4+1)  \\
&=& c_1(b'+1)+ (\mu_2-b')[c+1-(\mu_2+b'+1)\ell/2]. \nonumber
\end{eqnarray}

 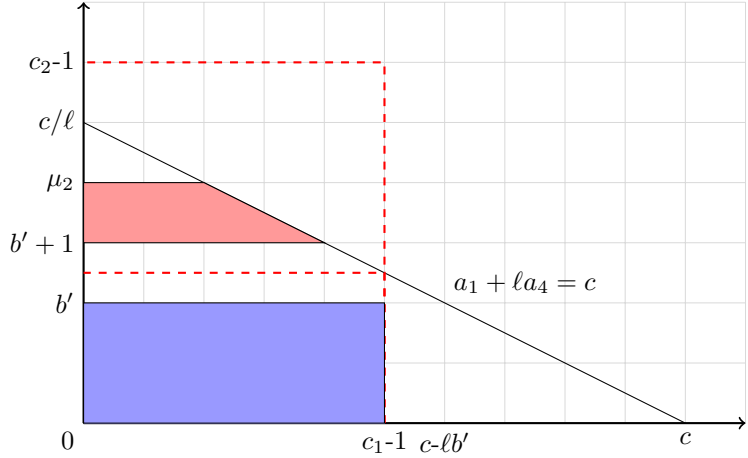
\begin{figure}[htb!]
     \centering
     \begin{tikzpicture}[scale=0.8]
\def\q{11};
\def\cOne{\q-5};
\def\cTwo{7};
\def\l{2};

\def\bPrime{2};
\pgfmathsetmacro\c{\q-1};
\pgfmathsetmacro\cl{0.5*(\c)};
  %\pgfmathsetmacro\b'{Floor(3.5)};
			
			\pgfmathsetmacro\qMinusOne{\q-1};
			\pgfmathsetmacro\d{3};
            \pgfmathsetmacro\b{3};

\draw[step=1cm,gray!30!white,very thin] (0,0) grid (\q,\q-4);
\draw[thick,->] (0,0) -- (\q,0) node[anchor=south east] {};
\draw[thick,->] (0,0) -- (0,\q-4) node[anchor=south east] {};
\draw (\c,0) -- (0,\cl);

\draw [dashed, red, thick] (\cOne-1,0) -- (\cOne-1,0.5+\bPrime) --(0,0.5+\bPrime);
\draw[dashed, red , thick] (\cOne-1,0) -- (\cOne-1,\cTwo-1) --(0,\cTwo-1);
\node [left] at (0,\cTwo-1) {$c_2$-$1$};
\node [below] at (\cOne-1,0) {$c_1$-$1$};

%\draw[dashed, red , thick] (\cOne-1,\cl-1) --(0,\cl-1);
%\draw[dashed, red , thick] (\c-\l*\bPrime,0) -- (\c-\l*\bPrime,\bPrime) --(0,\bPrime);
\node [above right] at (\c-\l*\bPrime,\bPrime) {$a_1+\ell a_4=c$};
\node [left] at (0,\cl) {$c/\ell$};
%\node [left] at (0,\cl-1) {$(c$-$c_1$+$1$)/\ell$};
\node [left] at (0,\bPrime+2) {$\mu_2$};
\node [left] at (0,\bPrime+1) {$b'+1$};
\node [left] at (0,\bPrime) {$b'$};
\node [below] at (\c,0) {$c$};
\node [below left] at (0,0) {$0$};
%\node [below] at (\c-\l*\bPrime,0) {$c-\ell b'$};
\node [below] at (\c-\l*\bPrime,0) {$c$-$\ell b'$};
\filldraw[fill=blue!40!white, draw=black] (0,0) rectangle (\cOne-1,\bPrime);
\filldraw[fill=red!40!white, draw=black] (0,1+\bPrime)--(\c-\l*\bPrime-\l,\bPrime+1)--(\c-\l*\bPrime-\l-2,\bPrime+2)-- (0,\bPrime+2)--cycle;
\end{tikzpicture}  
     \caption{Case II (ii)}
     \label{fig:case2ii}
 \end{figure}

 If $b < c_2-1$, then $\mu_1=c_1-1$ and $\mu_2=b$ in (\ref{e:dimensionFormula}), yielding the formula 
\begin{eqnarray} 
\dim_{\F_q}{\cC}_{\aa,Y}= c_1(b'+1)+ (b-b')[c+1-(b+b'+1)\ell/2]. \nonumber
\end{eqnarray}
 
If $b'< c_2-1 \leq b$, then $\mu_1=c_1-1$ and $\mu_2=c_2-1$ in (\ref{e:dimensionFormula}), yielding the formula 
\begin{eqnarray} 
\dim_{\F_q}{\cC}_{\aa,Y}= c_1(b'+1)+ (c_2-1-b')[c+1-(c_2+b')\ell/2]. \nonumber
\end{eqnarray}

 \textbf{The minimum distance.}
 
Let us compute the minimum distance now. Recall that $Y=V_{X(\F_q)}(I_L)$ consists of the following points $P_{i,j}=[\eta_1^i:1:1:\eta_2^j]\in T_X$, for $i\in [c_1]$ and $j\in [c_2]$.
  
For a given $F\in S_{\aa}$ we set $J=\{j\in [c_2] : x_4-\eta_2^j x_2x_3^{\ell} \quad \mbox{divides} \quad F\}$. Then, we first prove that the number of zeroes of $F$ in $Y$ satisfies  
\begin{eqnarray} \label{e:V_Y(F)first}
|V_X(F)\cap Y| \leq c_1|J|+(c_2-|J|) \deg_{x_1} F. 
\end{eqnarray} 
 It is clear that $F(P_{i,j})=0$ whenever $j\in J$ and there are $c_1|J|$ many such points in $Y$. Moreover, the polynomial $f(x_1):=F(x_1,1,1,\eta_2^j)\in \K[x_1]\setminus \{0\}$ if $j\notin J$. Thus, it can have at most $\deg_{x_1} F:=\deg (f)$ many zeroes for each $j\notin J$. So, we can have at most $(c_2-|J|) \deg_{x_1} F$ many such roots. Therefore, altogether $F$ can have at most $ c_1|J|+(c_2-|J|) \deg_{x_1} F$ zeroes in $Y$.

  \textbf{Case I:} Let $c<c_1$. \\
 Since $\deg_{x_1} F\leq c-\ell |J|$ in this case, it follows from (\ref{e:V_Y(F)first}) that
\begin{eqnarray*} \label{e:V_Y(F)firstcase}
|V_X(F)\cap Y| &\leq& c_1|J|+(c_2-|J|) (c-\ell |J|)\\
&\leq &c_2c+|J|(c_1-\ell c_2-c+\ell|J|). 
\end{eqnarray*} 
Since $0\leq |J| \leq \lfloor c/ \ell \rfloor$ and $c_1\leq \ell c_2$ , we have
 \begin{eqnarray} \label{e:V_Y(F)case1}
 |V_X(F)\cap Y| &\leq& c_2c+|J|(c_1-\ell c_2-c+\ell|J|)\leq c_2c.
\end{eqnarray}
Therefore, $F$ can have at most $c_2c$ many zeroes in $Y$. On the other hand, the following polynomial  
   $$F=x_2^
    d\prod\limits_{i=1}^{c}(x_1-\eta_1^ix_3)\in S_\alpha$$ vanishes exactly at the $c_2c$ points $P_{i,j}=[\eta_1^i:1:1:\eta_2^j]\in Y$, where $1\leq i \leq c$ and $1\leq j\leq c_2$. Thus,  there is a codeword $\mbox{ev}_{\alpha,Y}(F)$ with weight $c_1c_2-c_2c$. Hence,  $$\delta({\cC}_{\aa,Y})=c_1c_2-c_2c=c_2(c_1-c).$$ 
    
   \textbf{Case II:} Let $c\geq c_1$. 
 Then, we elaborate more on the upper bound given in (\ref{e:V_Y(F)first}) depending on whether $|J|\leq b'$ or $|J|>b'$.
 
 Suppose that $0\leq |J|\leq b'$. Since $\deg_{x_1} F \leq c_1-1$, it follows from (\ref{e:V_Y(F)first}) that
 \begin{eqnarray} 
|V_X(F)\cap Y| &\leq& c_1|J|+(c_2-|J|) (c_1-1)= c_2 (c_1-1)+|J| \nonumber. 
\end{eqnarray}
Since  $0\leq |J|\leq b'$, we have 
 \begin{eqnarray}  \label{e:V_Y(F)}
  |V_X(F)\cap Y|  \leq
c_2(c_1-1)+b'. 
\end{eqnarray}

Suppose now that $|J|>b'$. In this case, we write $|J|=b'+k$ for some positive integer $k$. Then, $c-\ell (b'+1) < c_1-1$ or equivalently $c-\ell (b'+1) \leq c_1-2 $, by definition of $b'$, which will be crucial below. Hence, 
\[c-\ell |J|=c-\ell(b'+1)- \ell(|J|-b'-1)\leq c_1-2-\ell(k-1).
\] Since $\deg_{x_1} F \leq c-\ell |J|\leq c_1-2-\ell(k-1)$, it follows from (\ref{e:V_Y(F)first}) that
   \begin{eqnarray} 
|V_X(F)\cap Y| &\leq& c_1|J|+(c_2-|J|) (c-\ell |J|-c_1+1+c_1-1) \nonumber\\
&\leq& c_1c_2-c_2+(c_2-|J|) (c-\ell |J|-c_1+1)+|J| \nonumber\\
&\leq& c_2(c_1-1)+(c_2-|J|) (-1-\ell(k-1))+|J|\nonumber.
\end{eqnarray}
Since $b'-|J|=-k$, $c_2-|J| \geq 1$ and  $\ell \geq 1$ the difference 
$$b'-[|J|+(c_2-|J|) (-1-\ell(k-1))]=(b'-|J|) +(c_2-|J|)(1+\ell(k-1))\geq 0$$  
and thus we have that
\begin{eqnarray} \label{e:V_Y(F)2}
|V_X(F)\cap Y| \leq (c_1-1)c_2+b'.
\end{eqnarray}

Then, by (\ref{e:V_Y(F)}) and (\ref{e:V_Y(F)2}), $F$ can have at most $(c_1-1)c_2+b'$ many zeroes in $Y$. On the other hand, the following polynomial  
   $$F=x_3^{c-\ell b'-(c_1-1)}x_2^{d-b'}\prod\limits_{i=1}^{c_1-1}(x_1-\eta_1^ix_3)\prod\limits_{j=1}^{b'}(x_4-\eta_2^j x_2x_3^{\ell})$$
vanishes exactly at $c_1b'+(c_2-b')(c_1-1)=c_2(c_1-1)+b'$, and thus there is a codeword with  weight $c_2-b'$. This shows that
	$\delta({\cC}_{\aa,Y})=c_2-b'$. 
\end{proof}
\begin{rema} \label{r:a<c_1-1}
    Let $(c,d)=(a+\ell b, a'+b)$. If $a\geq c_1-1$, then $c-\ell b=a\geq c_1-1$. So, $b'=b$. Since $b'=b < c_2-1$, then by (\ref{e:dimensionFormula}) above  $H_Y(\aa)=|\bar{B}_{\aa}|=c_1(b+1)$. It follows that the codes ${\cC}_{\aa',Y}$ with $a>c_1-1$ are all equivalent to the code ${\cC}_{\aa,Y}$ with $a=c_1-1$, by \cite[Proposition 4.3]{MultHFuncToricCodes16}. Furthermore, as the codes with $a'>0$ are equivalent to those with $a'=0$ by the explanations above (\ref{eq:assumptionB}), and the codes with $b'\geq c_2-1$ are trivial, it suffices to consider codes corresponding to $(c,d)=(a+\ell b, b)$ where $a\leq c_1-1$ and $b'<c_2-1$.
\end{rema}
The following makes the points of Remark \ref{r:a<c_1-1} more precise as the extra condition $\ell \geq c_1$ bounds $b$ by $c_2-1$ and reduces the number of non-equivalent codes to $c_1c_2$. 
\begin{coro} Let $\{c_1\uu_1,c_2\uu_2\}$ be a basis for $L$, and $Y=V_X(I_L)(\F_q)$ be the subgroup of the torus $T_X$ of the Hirzebruch surface $ \mathcal{H_\ell}$ over $\K=\F_q$ for which $c_1$ and $c_2$ divides $q-1$. If $\aa=(c,d)=(a+\ell b,d)$ and $\ell \geq c_1$, then the parameters of the toric code $\mathcal{C}_{\aa,Y}$  are $N=c_1c_2$,

$$\begin{array}{lllllllll} 
&K&=&a+1 &\text{ and }      &\delta &=& c_2(c_1-a), &\text{ if } c<c_1\\
&K&=&c_1b+1+a &\text{ and } &\delta &=& c_2-b+1, &\text{ if } a<c_1-1<c\\
&K&=&c_1(b+1) &\text{ and } &\delta &=& c_2-b, &\text{ if } a=c_1-1<c.
\end{array}$$

Furthermore, $(c_1-1+\ell(c_2-1),c_2-1)+ \N^2 = \reg(Y)$.
\end{coro}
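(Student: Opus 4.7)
The plan is to obtain the corollary as a specialization of Theorem \ref{T:codesOnHirzebruch} under the extra hypothesis $\ell\geq c_1$. The length $N=c_1c_2$ is immediate from Corollary \ref{c:c_i's}, and the condition $c_1\leq \ell c_2$ of the main theorem is automatic from $\ell\geq c_1$ and $c_2\geq 1$. The strategy for $K$ and $\delta$ is first to compute the auxiliary integer $b'$ of Theorem \ref{T:codesOnHirzebruch} in closed form in terms of $a$ and $b$, then to substitute into its five-case formula and watch the sub-cases collapse to the three displayed. Finally, I describe $\reg(Y)$ by characterizing when $H_Y(\aa)$ meets the maximum value $N$.

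First I would invoke the reduction (\ref{eq:assumptionB}) from the proof of Theorem \ref{T:codesOnHirzebruch}, which permits me to assume $d\leq\lfloor c/\ell\rfloor$ and hence $b=d$, so that $(c,d)=(a+\ell b,b)$ with $a=c-\ell b\geq 0$. When $c<c_1$, the hypothesis $\ell\geq c_1>c$ forces $b=0$ and $a=c$; the first sub-case of Case I of Theorem \ref{T:codesOnHirzebruch} then yields $K=c+1=a+1$ and $\delta=c_2(c_1-c)=c_2(c_1-a)$ directly. When $c\geq c_1$, since $0\leq c_1-1-a\leq c_1-1\leq\ell-1$, we have $\lceil(c_1-1-a)/\ell\rceil\in\{0,1\}$, and therefore
\[ b'\;=\;b-\lceil(c_1-1-a)/\ell\rceil\;=\;\begin{cases} b-1, & a<c_1-1,\\ b, & a=c_1-1.\end{cases} \]
This closed form is the crucial simplification made possible by $\ell\geq c_1$.

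With $b'$ in hand the second step is a direct substitution. For $a<c_1-1<c$, plugging $b'=b-1$ into the $K$ formula of Case II of the main theorem gives, in both sub-cases $b\leq c_2-2$ and $b=c_2-1$,
\[ K\;=\;c_1b+(c-\ell b)+1\;=\;c_1b+a+1, \]
after using $b-b'=1$ in one sub-case and $c_2-1-b'=1$ in the other, while $\delta=c_2-b'=c_2-b+1$ in both sub-cases. For $a=c_1-1<c$, the relation $b'=b$ collapses the formula immediately to $K=c_1(b+1)$ and $\delta=c_2-b$. These are exactly the three displayed lines in the statement.

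Finally, by definition $\aa\in\reg(Y)$ iff $H_Y(\aa)=N$, which by Theorem \ref{T:codesOnHirzebruch} happens precisely in its last sub-case, $c\geq c_1$ together with $b'\geq c_2-1$. Unfolding $b'\geq c_2-1$ under $\ell\geq c_1$ means $c-\ell(c_2-1)\geq c_1-1$ and $d\geq c_2-1$, i.e.\ $(c,d)\in(c_1-1+\ell(c_2-1),c_2-1)+\N^2$; the inequality $c\geq c_1$ is then automatic since $c_1-1+\ell(c_2-1)\geq c_1-1+\ell\geq c_1$. Reversing these implications proves the opposite inclusion. The only step that needs genuine care is the agreement of the two inner sub-cases of Case II of the main theorem at the boundary $b=c_2-1$ when $a<c_1-1$; this is the routine computation sketched above, and it is the main piece of bookkeeping in the proof.
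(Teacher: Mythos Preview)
Your proof is correct and follows essentially the same route as the paper: reduce to $b=d$ via (\ref{eq:assumptionB}), use $\ell\geq c_1$ to force $b=0$ in Case~I and to pin down $b'\in\{b,b-1\}$ in Case~II, then substitute into Theorem~\ref{T:codesOnHirzebruch}. If anything you are slightly more careful than the paper in checking that the two inner sub-cases of Case~II (namely $b<c_2-1$ and $b'<c_2-1\leq b$) give the same value of $K$ when $a<c_1-1$; the paper only writes out one of them.

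One small gap worth patching is in the regularity paragraph: your chain $c_1-1+\ell(c_2-1)\geq c_1-1+\ell\geq c_1$ uses $c_2\geq 2$, and your claim that $H_Y(\aa)=N$ occurs \emph{only} in the last sub-case of Theorem~\ref{T:codesOnHirzebruch} fails when $c_2=1$ (there $(c_1-1,d)$ lies in Case~I but still has $K=c_1=N$). The paper handles regularity instead by reading off directly from the three displayed $K$-values that they are $<c_1c_2$ unless $a=c_1-1$ and $b=c_2-1$; that argument is insensitive to whether $c_2=1$. Either fix your inequality for $c_2=1$ separately, or adopt the paper's approach for that final sentence.
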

\begin{proof}
Let $\ell \geq c_1$. We may suppose that $a\leq c_1-1$ by the virtue of Remark \ref{r:a<c_1-1}. Since $a\leq c_1-1$, it follows that $b'=b$ if $a=c_1-1$ and $b'=b-1$ if $a=c-\ell b<c_1-1$ as $c-\ell (b-1)=c-\ell b+ \ell \geq c_1-1$. So, if $b\geq c_2$, then $b'\geq c_2-1$ and thus the code is trivial by (\ref{e:dimensionN}). Thus, non-equivalent codes occur only for $\aa=a(1,0)+b(\ell,1)$ where 
$0\leq a \leq c_1-1$ and  $0\leq b \leq c_2-1$.

 \textbf{Case I:} If $c=a+\ell b<c_1\leq \ell$, then $0\leq a < \ell(1-b)$ implying that $b=0$. Hence, $\dim_{\K}{\cC}_{\aa,Y}=H_Y(a,0)=a+1$ and $\delta({\cC}_{\aa,Y})=c_2(c_1-a)$ by Theorem \ref{T:codesOnHirzebruch}, since $c=a$.
 
  \textbf{Case II:} Let $c\geq c_1$. We appeal to Theorem \ref{T:codesOnHirzebruch} again.
  
  If $a=c_1-1$, then $b'=b$ and $\dim_{\K}{\cC}_{\aa,Y}=H_Y(\aa)=c_1(b+1)$. Furthermore, we have $\delta({\cC}_{\aa,Y})=c_2-b'=c_2-b$. 
  
  If $a<c_1-1$, then $b'=b-1$ and $\dim_{\K}{\cC}_{\aa,Y}=H_Y(\aa)=c_1b+c+1-\ell b=c_1b+1+a$. Moreover, we have $\delta({\cC}_{\aa,Y})=c_2-b'=c_2-b+1$.
  
  The final claim follows easily, since the there values $a+1, c_1b+1+a$ and $ c_1(b+1)$ of $K$ above are strictly smaller than $c_1c_2$ whenever $0\leq a < c_1-1$ and  $0\leq b < c_2-1$, and $K=H_Y((c_1-1+\ell(c_2-1),c_2-1))=c_1c_2$ as $a = c_1-1$ and  $ b = c_2-1$. 
 \end{proof}
We finish the paper with a toy example illustrating the theory we developed.
\begin{ex} Let $X=\cl H_3$ over $\F_{7}$. Let $c_1=3$, $c_2=3$. Then the parameters of the code are given in Table \ref{table:codes}. According to Markus Grassl's Code Tables \cite{Grassl:codetables} a best-possible code with $N=9$ has $K+\delta=9$ or $K+\delta=10$ (MDS codes). This example provides us with a best possible code whose parameters are $[9,7,2]$ together with an MDS code $[9,8,2]$.

%\begin{table}[h!]--Tabloyu metinde nereye koyarsan orada görünür
%  \hline
% \multicolumn{5}{|c|c|c|}{Codes on $Y_Q \subseteq \cl H_2$}& \multicolumn{5}{| c |}{Codes on $T_X \subseteq X=\cl H_2$}\\

\begin{longtable}[h!]{|c|c|c|} 
\caption{Parameters of Toric Codes on $X=\cl H_3$. \label{table:codes}}\\	
	
	\hline
	{$(a, b)$ \cellcolor{gray!20}} &
	{$\mathbf{\alpha}$ \cellcolor{gray!20}}& 
	{$[N, K, \delta]$ \cellcolor{gray!20}}\\
	
	\hline
$(0, 0)$ & $(0, 0)$ & $[9, 1, 9]$  \\
\hline
$(1, 0)$ & $ (1, 0)$ & $[9, 2, 6]$ \\ 
\hline
$(2, 0)$ &  $(2, 0)$ & $[9, 3, 3] $ \\ 
	\hline
$(0, 1)$ & $(3, 1)$ &$[9, 4, 3]$  \\
\hline
$(1, 1)$ &  $(4, 1)$ & $[9, 5, 3]$  \\
\hline
$(2, 1)$ &  $(5, 1)$ & $[9, 6, 2]$ \\ 
	\hline
$(0, 2)$ &  $(6, 2)$ &$[9, 7, 2]$  \\
\hline
$(1, 2)$ &  $(7, 2)$ & $[9, 8, 2]$\\
\hline
$(2, 2)$ &  $(8, 2)$ &   $[9, 9, 1] $  \\
	\hline
\end{longtable}	
\end{ex}

\section*{Acknowledgement} The author thanks Laurence Barker for his contribution explained in Remark \ref{r:barker} and Ivan Soprunov for his useful comments on the manuscript. He also expresses his gratitude to an anonymous referee who reads the paper carefully and helped improve the presentation considerably.

%    Bibliographies can be prepared with BibTeX using amsplain,
%    amsalpha, or (for "historical" overviews) natbib style.
%\bibliographystyle{amsplain}
%\bibliographystyle{plain} 
%\bibliography{SahinRationalPointsOfLatticeIdealsOnAToricVariety} 

\end{document}